\newcommand\numberthis{\addtocounter{equation}{1}\tag{\theequation}}
\theoremstyle{plain}
\newtheorem{theorem}{Theorem}[section]
\newtheorem{proposition}[theorem]{Proposition}
\newtheorem{corollary}[theorem]{Corollary}
\newtheorem{lemma}[theorem]{Lemma}
\theoremstyle{definition}
\DeclarePairedDelimiter\ceil{\lceil}{\rceil}
\DeclarePairedDelimiter\floor{\lfloor}{\rfloor}
\newcommand{\cP}{\mathcal{P}}
\newcommand{\X}{\mathsf{X}}
\newcommand{\sN}{\mathsf{N}}
\newcommand{\sM}{\mathsf{M}}
\newcommand{\sn}{\mathsf{n}}
\newcommand{\sZ}{\mathsf{Z}}
\newcommand{\Pb}{\mathbb{P}}
\newcommand{\sS}{\mathsf{S}}
\newcommand{\sB}{\mathsf{B}}
\newcommand{\sX}{\mathsf{X}}
\newcommand{\cT}{\mathcal{T}}
\newcommand{\PoisN}{\tilde{\mathsf{N}}}
\newcommand{\PoisS}{\tilde{\mathsf{S}}}
\newcommand{\PoisTau}{\tilde\tau}
\newcommand{\PoisCN}{\widetilde{\mathcal{N}}}
\newcommand{\law}{\mathsf{law}}
\newcommand{\pref}[1]{\hyperref[#1]{[p. \pageref{#1}]}}
\newcommand{\old}[1]{}
\newcommand{\N}{\mathds{N}}
\newcommand{\R}{\mathds{R}}
\newcommand{\T}{\mathds{T}}
\newcommand{\U}{\mathcal{U}_{\infty}}
\newcommand{\Prob}{\mathds{P}}
\DeclareMathOperator{\Var}{\mathrm{Var}}
\DeclareMathOperator{\Cov}{\mathrm{Cov}}
\newcommand{\E}{\mathds{E}}
\renewcommand{\L}{\mathcal{L}}
\newcommand{\F}{\mathcal{F}}
\newcommand{\cN}{\mathcal N}
\newcommand{\type}{\mathrm t}
\DeclarePairedDelimiterX{\inp}[2]{\langle}{\rangle}{#1, #2}
\newcommand{\eqdist}{%
	\mathrel{\vbox{\offinterlineskip\ialign{%
				\hfil##\hfil\cr
				$\scriptscriptstyle\mathrm{law}$\cr
				\noalign{\kern.2ex}
				$=$\cr
}}}}
\newcommand{\defeq}{\vcentcolon=}
\newcommand{\distto}{%
	\mathrel{\vbox{\offinterlineskip\ialign{%
				\hfil##\hfil\cr
				$\scriptscriptstyle\mathrm{d}$\cr
				\noalign{\kern-.05ex}
				$\to$\cr
}}}}
\newcommand{\findimto}{%
	\mathrel{\vbox{\offinterlineskip\ialign{%
				\hfil##\hfil\cr
				$\scriptscriptstyle\mathrm{f.d.}$\cr
				\noalign{\kern-.05ex}
				$\to$\cr
}}}}
\newcommand{\Probto}{%
	\mathrel{\vbox{\offinterlineskip\ialign{%
				\hfil##\hfil\cr
				$\scriptscriptstyle\Prob$\cr
				\noalign{\kern-.05ex}
				$\to$\cr
}}}}
\newcommand{\TVto}{%
	\mathrel{\vbox{\offinterlineskip\ialign{%
				\hfil##\hfil\cr
				$\scriptscriptstyle\mathrm{TV}$\cr
				\noalign{\kern-.05ex}
				$\to$\cr
}}}}
\newcommand{\ind}[1]{\mathds{1}_{\{#1\}}}
\title{An epidemic model in inhomogeneous environment}
\author{Daniela Bertacchi,  J\"urgen Kampf,  Ecaterina Sava-Huss,
 and Fabio Zucca}
\date{\today}
\begin{document}
\maketitle						

 \begin{abstract}
The current work deals with an epidemic model on the complete graph $\mathbb{K}_n$ on $n$ vertices in a non-homogeneous setting, where the vertices may have distinct types. Different types differ in the probability of getting infected, 
and/or in the 
capacity of infecting other vertices. This generalizes the model in \cite{comets14}. We prove in Theorem \ref{t:LLN} and Theorem \ref{thm:clt} laws of large numbers and 
central limit theorems for the  the total duration of the process and for the number of infected vertices, respectively, when $n\to\infty$. 
By coupling the epidemic model with a Poisson process, we also obtain continuous-time counterparts of the above-mentioned limit results.
Moreover, we also prove that when all individuals have the same spread capacity, 
then a population with inhomogeneous susceptibility is less affected by the epidemics than a homogeneous population.
 \end{abstract}

\noindent {\bf Keywords}: multitype Galton-Watson process, coupon collector, branching process, limit theorems, Markov chain, Poisson process, random trees.

\noindent {\bf AMS subject classification}: 60J10, 60J80, 60F17. 


\section{Introduction}\label{sec:intro}

We consider an epidemic model in an inhomogeneous environment, where a virus arrives from outside the system and infects one individual of a population.
 Then, the infected individual attempts to infect some other individuals of the population, and these individuals, in turn, try to infect other individuals, and so on. We start with a population of $n$ individuals, that will be encoded in the vertices of the complete graph $\mathbb{K}_n$ on $n$ vertices. The virus can circulate along the edges of $\mathbb{K}_n$. When we refer to a vertex of a graph, we think of it as an individual of the population.

Once a target vertex $u$ is hit by an infection attempt, two things may happen.  Either $u$ is visited for the first time, then it becomes infected and makes itself $\L^{\type(u)}(u)$ attempts to infect other vertices in the graph. The number $\L^{\type(u)}(u)$ is called the spread capacity of $u$ and will be introduced in full detail later. Or the vertex $u$ has been infected before, then it has antibodies, and it has already revealed its spread capacity, so nothing happens, but just one piece of the spread capacity of the source vertex is lost. The process stops when either all vertices have been infected, or there are non-infected vertices, but the total spread capacity is exhausted, which happens when too much capacity was wasted on vertices with antibodies.

A mathematically equivalent problem is that of information transmission in a network, as used in  \cite{comets14,CGPV16}: the vertices of the graph are computers and when one of these computers receives a message, it spreads the message to a certain number of further computers, but it does so only if it has received the message for the first time. 

During this work we shall refer to our model as the \textit{virus spread model} or \textit{infection model}. 
We recall that processes where individuals after infection become immune are often called SIR models (susceptible-infected-recovered) -- see \cite{BrittonPardoux19} for an introduction to this model and to other epidemic models.
There is a vast literature on SIR models. On infinite graphs the aim is usually to study the shape of the set of the sites reached by
infection (see for instance \cite{CoxDurrett88}, \cite{Zhang93}, \cite{AndjelSaada2005}) or to identify  the critical parameter below which
the epidemics dies out in a finite time (in these cases there is a parameter which tunes the intensity of the epidemics, see \cite{Lalley2014}).
In the present paper the graph is finite, hence the epidemics ends almost surely in finite time.
Natural questions in this context are how long the whole epidemic lasts, 
whether all vertices are infected during the epidemic and, if not, how large the number of infected/non-infected vertices is. 

A slightly different, well-studied epidemic model is the so-called frog model, where infected individuals randomly walk on the graph and infect all the susceptible individuals they meet on their way. 
When the underlying graph is the complete graph, the frog model is a particular case of our model.
In \cite{Kurtz08} this model was considered on complete graphs where all vertices are equally likely to be infected and they all have the same deterministic spread capacity. 
In that paper individuals are assumed to have a deterministic number $L$ of ``lives'' and one life is lost when the infection
attempt is unsuccessful. This translates into our language by assuming that each individual has a spread capacity equal to $L+1$.
The authors derive laws of large numbers and central limit theorems for the number of individuals eventually infected and law of large numbers for the number of infection attempts. 
The frog model on complete graphs has recently been studied in \cite{Leben19} (with geometric lifespans and simultaneous infection attempts), and \cite{Zhukovskii14} (where simultaneous infection attempts are allowed). 

In \cite{comets14} the model was generalized to random spread capacities and laws of large numbers, central limit theorems and large deviation results are derived. Later on, in \cite{CGPV16} these results have been generalized from complete graphs to Erd\"os-R\'enyi graphs; in particular in \cite{CGPV16} it is shown that the model behaves in the limit roughly like the one on complete graphs from \cite{comets14} by using a coupling argument. 
However, in \cite{comets14} and the other works mentioned so far the environment was homogeneous in the sense that all vertices had the same (distribution of the) spread capacity and the same probability of getting infected. As we all know, such assumptions are not very realistic, e.g.\ the probability to infect a member of the same family is much higher than infecting the member of another family, people with many social contacts have a higher probability of being infected and will infect more people once they are infected. Finally, also factors like age or other diseases can increase the probability of being infected. So, a straightforward generalization of the model introduced in \cite{comets14} is to consider $J$ different types of vertices with distinct spread capacities and infection probabilities, for some fixed number $J\in \N$.  The type of a vertex will be represented by a \emph{type function} $\type:\mathbb{K}_n\to\{1,2\ldots,J\}$ such that, for each vertex $v$, $\type(v)=i\in \{1,2\ldots,J\}$ means that $v$ is of type $i$.

Somehow similarly  to \cite{comets14}, our proof approach will be to consider a multitype Galton-Watson tree and a coupon collector with $J$ different types of coupons on the same probability space as the infection model. In contrast to \cite{comets14}, in our model the coupons will be unequally likely, and thus the success epochs (time needed to collect a new coupon) will not be independent random variables.
Independence was heavily used in \cite{comets14}. To overcome the difficulties arising in the non-independent case, we 
use thinned Poisson processes as time intervals of the infection process.
\vspace{-0.5cm}
\paragraph{Structure of the paper.} In Section \ref{sec:prelim} we rigorously introduce our infection model and multitype Galton-Watson processes. We also show  how to represent such a process as a Markov chain for which we write down the transition probabilities, and how to interpret it as a coupon collector's problem.
Then in Section \ref{sec:pois-pr} we couple the infection model with a Poisson process, and then by using the thinning property of such processes, we get rid of the dependencies produced by the coupons of different types and probabilities of being chosen. Subsequently, in Section \ref{subsec:lim-poi} we prove limit results for the number of infected individuals and for the acquisition times in the continuous-time setting. In Section \ref{sec:inf-mgw}, we look at our model as a random subtree of a multitype Galton-Watson tree with $J$ different types, and we show that as in the homogeneous case, the infection takes place on a macroscopic time level. In Section \ref{sec:lln} we prove law of large numbers for the total duration of the process $\tau_n$ and for the total number $\mathcal{N}_{\tau_n}$ of individuals infected by time $\tau_n$ respectively, while in Section \ref{sec:clt} we prove central limit theorems for the same quantities. 
Finally, in Section \ref{sec:final}, we compare the homogeneous case with the inhomogeneous one. In particular we prove that when the spread capacities of the individuals are identically distributed, the epidemics in the inhomogeneous enviroment (that is, with non-uniform susceptibilities) lasts less and involves a smaller fraction of the population, when compared to the case with uniform susceptibility. On the other hand, if also the spread capacities are allowed to have different laws, then it may happen that a population with non-uniform
susceptibility is less affected by the epidemics (for instance when more susceptible individuals have smaller expected spread capacity) or it may be more affected (for instance when more susceptible individuals have larger expected spread capacity).

\section{Preliminaries}\label{sec:prelim}

Let $(\Omega,\F,\Pb)$ be a generic probability space on which all our random variables and processes are defined.
\vspace{-0.5cm}
\paragraph*{Conventions and notations.}
The number $J\in\N$ will be reserved for the number of types, and we will sometimes write $[J]:=\{1,2,\ldots,J\}$.
We use the same notation $[n]:=\{1,2,\ldots,n\}$ for any natural number $n\ge1$.
All our vectors will be regarded as column vectors, and for a vector $\mathsf{u}\in \R^J$  (or infinite dimensional) the notation $\mathsf{u}=(\mathsf{u}_1,\ldots\mathsf{u}_J)$ is purely for convenience. When we deal with row vectors, it will be either mentioned explicitly, or it will be clear from the context. 
For $i\in [J]$, denote by  $\mathsf{e}_i=(0,\ldots,1,\ldots,0)$ the $i$-th standard basis vector in $\R^J$ that has $1$ in the $i$-th coordinate, and all the other coordinates are $0$. 
\vspace{-0.5cm}
\paragraph*{Stochastic processes.} For any discrete-time processes $(X_t)_{t\in\N}$,  $(X_t)_{t\geq 0}$ stands for\\ $(X_{\floor{t}})_{t\geq 0}$, where $\floor{t}$ stands for the floor function. We may also use the ceiling function $\ceil{\cdot}$. For any interval $I$ in $\R$, define the Skorokhod space $\mathcal D(I,\R)$ as the space of c\'adl\'ag functions (right continuous with left limits) from $I$ to $\R$. As usual, we endow $\mathcal{D}(I,\R)$ with the standard $\mathbf{J}_1$ topology introduced by Skorokhod (see \cite{Bill99} or  \cite{jacod-shiryaev}). We remind that convergence in the  $\mathbf{J}_1$ topology implies uniform convergence on compact sets when the limit function is continuous.

\subsection{Virus spread model}\label{sec:virus-model}


We take $\mathbb{K}_n$, the complete graph on $n$ vertices, as the base state space where the infection takes place and recall that, for $J\in \N$, the type of a vertex is represented by the \emph{type function} $\type:\mathbb{K}_n\to\{1,2\ldots,J\}$ that is, for each vertex $v$, $\type(v)=i\in \{1,2\ldots,J\}$ means that $v$ is of type $i$. The type function is deterministic and fixed at the beginning.
For $i\in [J]$, denote by $n_i=|\{v\in\mathbb{K}_n:\ \type(v)=i\}|$ the number of vertices in $\mathbb{K}_n$ with type $i$, so that $n=\sum_{i=1}^J n_i$.
We assume that each vertex of type $i\in[J]$ has been assigned the weight $p_i\in(0,1)$ (the probability) so that the total weight is $1=n_1p_1+n_2p_2+\ldots +n_J p_J$. 
The goal of this paper is to extend part of the results in \cite{comets14}, where all vertices have the same type and equal weights $1/n$, to the non-homogeneous setting of vertices with different types and weights. If $p_1=\ldots=p_J=\frac{1}{n}$, then the proportion $n_i/n$ of vertices of type $i$ in the graph represents the weight of this group. 
We do not restrict to the case of equal probabilities, and the weight of the group of vertices of type $i$ will be denoted  by $\alpha_i:=n_i p_i$, for $i\in[J]$.
\vspace{-0.5cm}
\paragraph*{Description of the virus spread model.} We consider the following stochastic process in discrete time. At time $t=0$, a fixed vertex $v_0\in\mathbb{K}_n$ receives a virus from outside, and it spreads the virus among the vertices of $\mathbb{K}_n$ at random, considering that the spread capacity of each vertex is limited and encoded in the following random variables. Let $\L^1,\ldots,\L^J$ be $J$ independent $\N$-valued random variables, where for $i\in[J]$, $\L^i$ represents the number of infection attempts a vertex of type $i$ can make once infected. Furthermore,  for $i,j\in [J]$ denote by $L^{(i,j)}$ the $\mathds{N}$-valued random variable that gives the number of vertices of type $j$ that one infected vertex of type $i$ can in turn infect, so $\L^i=\sum_{j=1}^JL^{(i,j)}$. Below, the random variables $L^{(i,j)}$ will also be used as entries of an offspring distribution matrix $L$ in a multitype Galton-Watson process.

For every $i\in[J]$, let $(\L^i(u))_{\{u\in\mathbb{K}_n:\ \type(u)=i\}}$ be a family (of length $n_i$) of i.i.d.\ copies of $\L^i$.  For a vertex $u$ of type $i$, the random variable $\L^i(u)$ represents \textit{the spread capacity} of $u$, and has the same distribution as $\L^i$. For technical reasons we also need $\L^i(u)$ to be defined if $\type(u)\ne i$, but these random variables are void of meaning. We write 
$\L:=(\L^1,\L^2,\ldots,\L^J)$ for the vector of spread capacities. 

Notice that the offspring distribution $(L^{(i,j)})_{i,j\in[J]}$ we can realize is far from arbitrary. In fact, given the event $\{\L^i(u)=l\}$ that vertex $u$ makes $l\in\N$ infection attempts, the types of the vertices hit by these $l$ attempts are i.i.d.\ with probabilities $\alpha_1, \dots, \alpha_J$, and thus the numbers $(L^{(i,1)}(u), \dots, L^{(i,J)}(u))$ are multinomial distributed with parameter $l$ and $\alpha_1, \dots, \alpha_J$, i.e.\
\[ \mathbb{P}\big[ L^{(i,1)}(u) =k_1, \dots, L^{(i,J)}(u) = k_J | \L^i(u)=l\big] = { l \choose{k_1, \dots, k_J}} \alpha_1^{k_1} \cdot \dots \cdot \alpha_J^{k_J}. \]
for all $k_1, \dots, k_J \in \N$ with $\sum_{i=1}^Jk_i=l$, where ${0 \choose{0, \dots, 0}}:=1$. Hence
the distribution of the vector $\big( L^{(i,1)}(u) , \dots, L^{(i,J)}(u)  \big)$ is given by 
\begin{equation}\label{eq:off-matrix}
 \mathbb{P}\big[ L^{(i,1)}(u) =k_1, \dots, L^{(i,J)}(u) = k_J \big] =  \mathbb{P}[\L^i(u)=l] \cdot { l \choose{k_1, \dots, k_J}} \alpha_1^{k_1} \cdot \dots \cdot \alpha_J^{k_J},\end{equation}
where $l:= k_1+\dots + k_J$.
With the random variables introduced above, we are now ready to explain how the model works. Suppose that the vertex $v_0\in\mathbb{K}_n$ that receives the virus is of type $i_0\in[J]$, i.e. $\type(v_0)=i_0$. This first vertex reveals its spread capacity $\L^{i_0}(v_0)$, with $\L^{i_0}(v_0)=\sum_{i=1}^JL^{(i_0,i)}(v_0)$ and distributes it randomly among the neighbors, not uniformly, but according to the probabilities $p_i$, $i=1,\ldots,J$. 
One may think of having $\L^{(i_0)}$ independent random walkers at $v_0$ that perform independently one random step according to the transition probability
$p_j$, so that $L^{(i_0,j)}$ walkers reach a vertex of type $i$ 
{(type $j$)} for each $j\in [J]$. That is, the spread capacity $L^{(i_0,j)}(v_0)$ is spread only among vertices of type $j$. Recall that $\sum_{u\in\mathbb{K}_n} p_{\type(u)}=\sum_{i=1}^Jn_ip_i=1$. Once another vertex is reached for the first time by such a random walker, this vertex reveals its capacity, and does the same thing as the first vertex $v_0$. If a random walker reaches a vertex infected previously, nothing happens. We continue until there is nothing to be spread around.  
We can describe the above model as a Markov chain in discrete time.
Since the number $n$ of vertices is finite, this process will terminate in finite time, because the spread capacity of each individual (vertex) is assumed to be a.s.\ finite. 

Our main goal is to study the asymptotics, in $n$, for the probability of infecting every site, 
for the proportion of infected vertices of each type before exhaustion and for the total duration of the process.

\subsubsection*{Virus spread model as a Markov chain}

We describe now formally the model introduced heuristically above, as a Markov chain $(\sM_t)_{t\in\N}$, where $\sM_t$ is given by the following 
$(J+1)$-uple in $\mathds{N}^{J+1}$:
$$\sM_t=(\underbrace{\sN^1_t,\sN^2_t,\ldots,\sN^J_t}_{:=\sN_t},\sS_t)=(\sN_t,\sS_t)$$
where $\sN^i_t$, for $i\in [J]$ represents the number of vertices of type $i$ infected by time $t$ and $\sS_t$ represents the total spread capacity (revealed and available for distribution) by time $t$. We suppress the index $n$ in the definition of the Markov chain $\sM_t$ and in the random variables in the 
$(J+1)$-uple,
 but it is clear that all those random variables depend on the total number 
$n$ of available vertices.
Suppose at time $t=0$, the virus reaches a vertex $v_0$ with type $i_0$, that is
\begin{align*}
\sN_0&=\mathsf{e}_{i_0},\quad  \sS_0=\L^{(i_0)}(v_0)\sim \L^{(i_0)}, \quad 
\text{and}\quad \sM_0=(\mathsf{e}_{i_0},
{\L^{(i_0)}(v_0)}).
\end{align*}
Given the state $(\sn^1,\ldots,\sn^J,\mathsf{s})$ of the Markov chain $(\sM_t)_{t\in\N}$ at time $t$, the
state at time $t+1$ is given by
\begin{equation}\label{eq:tr-pb-MC}
\begin{cases}
(\sn^1,\ldots,\sn^J,\mathsf{s}-1), & \text{ with probability } \sum_{i=1}^J\sn^i p_i \\
(\sn^1+1,\sn^2,\ldots,\sn^J,\mathsf{s}+\L^{1}_t-1), & \text{ with probability } p_1(n_1-\sn^1)\\
\vdots\\
(\sn^1,\ldots,\sn^i+1,\ldots,\sn^J,\mathsf{s}+\L^{i}_t-1), & \text{ with probability } p_i(n_i-\sn^i)\\
\vdots\\
(\sn^1,\ldots,\ldots,\sn^J+1,\mathsf{s}+\L^{J}_t-1), & \text{ with probability } p_J(n_J-\sn^J)\\
\end{cases}
\end{equation}
as long as $\mathsf{s}\ne 0$. In reality, the process stops if $\mathsf{s}=0$. For technical reasons, we have, however, to assume that it continues, but its values are void of meaning then.  
Remark that $\sN_t=(\sN^1_t,\sN^2_t,\ldots,\sN^J_t)$ is a Markov process itself, the so-called \textit{coupon collector's process}, where the coupons are chosen with unequal probabilities. We denote by $\mathcal N_t=\sum_{i=1}^J\sN_t^i$ the total number of vertices infected -- or coupons collected -- by time $t$.

The sequence of 
the revealed
spread capacities $(\L_t)_{t\in \N}=\left((\L^1_t,\ldots,\L^J_t)\right)_{t\in\N}$ is an i.i.d.\ copy of the random variable $\L=(\L^1,\ldots\L^J)$ as introduced 
at the beginning of this section. 
Since $n$ is finite, and each vertex has a finite spread capacity, this process will terminate in finite time $\tau:=\tau_n$,
\begin{equation}\label{eq:end-time}
\tau_n:=\min\{t\in\N: \sS_t=0\}
\end{equation}
either when all vertices were reached and exhausted their own capacity, or when there is no more capacity to be spread even though some of the vertices were not visited at all. 
We will be interested in the asymptotic behavior of $\tau_n$, and in 
the
 total number of infected vertices at exhaustion $\sum_{i=1}^J\sN^i_{\tau_n}$. It suffices to understand the law of the random vector 
$(\sN^1_{\tau_n},\sN^2_{\tau_n},\ldots,\sN^J_{\tau_n})$ in order to obtain the limit behavior for $\cN_{\tau_n}=\sum_{i=1}^J\sN^i_{\tau_n}$.

\vspace{-0.5cm}
\paragraph*{Acquisition and inter-arrival  times.}
In order to have a better understanding of the joint limit behavior of the process
$\sN_t=(\sN^1_t,\sN^2_t,\ldots,\sN^J_t)$, where  for $i\in[J]$, $\sN^i_t$ represents the number of vertices in $\mathbb K_n$ of type $i$ that have been infected by time $t$, we will also introduce \textit{acquisition times} $T^i_k$ and \textit{inter-arrival times} $\Delta^i_k$: for $i\in [J]$ and $k=1,\ldots,n_i$ denote by  $T^i_k$ the time needed to reach the $k$-th vertex of type $i$, and by $\Delta^i_k$ the time needed, after the $(k-1)$-th vertex of type $i$ has been collected, to collect the $k$-th one, that is
\begin{align*}
T^i_k&=\inf\{t\in \N: \ \sN^i_t=k\}\\
\Delta^i_k&=T^i_k-T^i_{k-1}, \text{
{where we put $T^i_0 := 0$,} or equivalently } T^i_k=\sum_{j=1}^k\Delta_j^i
\end{align*}
and $\sN^i_t=\sum_{k=1}^{n_i}\ind{T^i_k\leq t}$.
While in the case when all vertices have the same type and the same probability of being chosen, the inter-arrival times are independent geometrically distributed random variables, when one has different types of vertices with unequal probabilities of reaching them, we do not have independence anymore. 

\subsection{Multitype Galton-Watson processes}\label{sec:mgw}

It may be useful to view multitype Galton-Watson processes as random subtrees of the Ulam-Harris tree that we define now.
\vspace{-0.5cm}
\paragraph*{Ulam-Harris tree.}
The infinite Ulam-Harris  tree $\U$  is the infinite rooted tree with vertex set $V_{\infty}\defeq \bigcup_{n \in \N_0} \N^n$, the set of all finite strings or words $v_1\cdots v_n$ of positive integers over $n$ letters, including the empty word $\varnothing$ which we take to be the root, and with an edge joining $v_1\cdots v_n$ and $v_1\cdots v_{n+1}$ for any $n\in \N_0$ and any $v_1, \cdots, v_{n+1}\in \N$. Thus every vertex $v=v_1\cdots v_n$ has outdegree $\infty$, and the children of $v$ are the words $v1,v2,\ldots$ and we let them have this order so that $\U$ becomes an infinite ordered rooted tree. We will identify $\U$ with its vertex set $V_{\infty}$, where no confusion arises. For vertices $v=v_1\cdots v_n$ we also write $v=(v_1,\ldots,v_n)$, and if $u=(u_1,\ldots,u_m)$ we write $uv$ for the concatenation of the words $u$ and $v$, that is  $uv=(u_1,\ldots,u_m,v_1,\ldots,v_n)$. The parent of $v_1\cdots v_n$ is $v_1\cdots v_{n-1}$.
Further, if $k \leq m$, we set $u|_k \defeq u_1 \ldots u_k$ for the vertex $u$ truncated at height $k$.
Finally, for $u \in \U$, we use the notation $|u|=n$ for $u \in\N^n$, that is $u$ is a word of length (or height) $n$, that is, at distance $n$ from the root $\varnothing$. The family $\mathcal{T}$ of ordered rooted trees can be identified with the set of all subtrees $\mathds{T}$ of $\U$ that have the property that for all $v\in V(\T$):
$$vi\in V(\mathds{T})\  \Rightarrow \ vj\in V(\mathds{T}), \quad \text{for all }j\leq i,$$
where $V(\T)$ denotes the set of vertices of $\T$. For a tree in $\mathcal{T}$  which is not rooted at $\varnothing$, but at some other vertex $u\in \U$, we write $\T_u$. For two vertices $u,v\in \U$ we denote by $d(u,v)$ their graph distance, that is, the length of the shortest path between $u$ and $v$.
For trees rooted at $\varnothing$, we omit the root and we write only $\T$.

For $\mathcal{T}=\{\text{all ordered rooted subtrees }\T \text{ of }\U\}$ and $J\in \N$, a $J$-type tree is a pair $(\T,\type^{\star})$ where $\T\in \mathcal{T}$ and $\type^{\star}:\T\to\{1,\ldots,J\}$ is a function defined on the vertices of $\T$ which returns for each vertex $v$ its type $\type_{\T}(v)$. We denote by $\mathcal{T}^{[J]}$ the set of all $J$-type trees, and elements of $\mathcal{T}^{[J]}$ will be referred to as  $\T$ without explicitly mentioning the function $\type^{\star}$.

A multitype Galton-Watson processes (MGW), called also multitype branching process, is a natural generalization of a Galton-Watson process, where a finite number of distinguishable types of particles with different
probabilistic behavior are allowed. We use again $J\in \N$ for the number of particle types, and  the particle types will
 be the same as the $J$ different types of vertices in the complete graph $\mathbb{K}_n$ and will be denoted by $\{1,\ldots,J\}$. 
In order to relate the infection model with multitype Galton-Watson processes, we proceed as follows.
We consider the sequence $\left(L^{(i)}\right)_{i\in [J]}$ of $J$ independent random (row) vectors in $\R^J$, with entries  $L^{(i)}=\left(L^{(i,1)},\ldots,L^{(i,J)}\right)$, whose distribution is given by \eqref{eq:off-matrix}, and with these $J$ vectors we build our Galton-Watson tree.
 The vector $L^{(i)}$
represents the offspring distribution vector of a vertex of type $i$, meaning that for $j\in [J]$, the entry $L^{(i,j)}$ represents the number of offspring of type $j$ produced by a vertex of type $i$. Let now $L$ be the $J\times J$ random matrix whose rows are the vectors $L^{(i)}$:
$$L=
{\begin{pmatrix} L^{(1)} \\ L^{(2)} \\ \vdots \\ L^{(J)}\end{pmatrix}}
=\begin{pmatrix}
   L^{(1,1)}  & L^{(1,2)} &  \cdots & \cdots  &  L^{(1,J)}\\
   L^{(2,1)}  & L^{(2,2)} & \cdots  & \cdots  & L^{(2,J)} \\
    \vdots    & \vdots    & \vdots &  \vdots & \vdots \\
   L^{(J,1)}   & L^{(J,2)}  & \cdots & \cdots   & L^{(J,J)}
  \end{pmatrix}$$
 
 Starting with the random matrix $L$ as introduced above, we can now define 
multitype Galton-Watson trees  as $\mathcal{T}^{[J]}$-valued random variables, where the type function $\type^{\star}$ is random and defined in terms of the $J\times J$-random matrix $L$. Let $(L(u))_{u\in \U}$ be a family of i.i.d.\ copies of $L$, so $(\L(u))_{u\in\U}$ is a family of i.i.d.\ copies of the random vector $\L=(\L^1,\ldots,\L^J)$ that has independent entries.
For any $i_0\in [J]$, we define the random labeled tree $\T^{\mathsf{MGW}}\in \mathcal{T}^{[J]}$ rooted at $\varnothing$ with the associated type function
$\type^{\star}:\T^{\mathsf{MGW}}\to \{1,\ldots,J\}$ 
recursively as follows:
$$\varnothing\in\T^{\mathsf{MGW}}\quad\text{and}\quad \type^{\star}(\varnothing)=i_0.$$
Now suppose that $u=u_1\ldots u_m\in\T^{\mathsf{MGW}}$ with $\type^{\star}(u)=i$, for some $i\in[J]$. Then 
$$u_1 \ldots u_m k\in \T^{\mathsf{MGW}} \quad\text{iff}\quad k\le L^{(i,1)}(u)+\dots+L^{(i,J)}(u)=\mathcal{L}^i(u)$$ and we assume
\[ |\{k\in\mathds{N} \mid \type^\star(u_1 \dots u_mk)=j\}| =L^{(i,j)}(u) \text{ for all } j\in [J]. \]

\emph{The multitype branching process}  $\sZ_t=(\sZ^1_t, \dots , \sZ^J_t)$ associated with $(\T^{\mathsf{MGW}},\type^{\star})$, and starting from a single particle of type $i_0\in [ J]$ at the root $\varnothing$, that is $\type^{\star}(\varnothing)=i_0$, is defined as: $\sZ_0=\mathrm{e}_{i_0} $ and for $t\geq 1$
\begin{align*}
\sZ^i_t\defeq\#\{u\in\T^{\mathsf{MGW}}:|u|=t\text{ and }\type^{\star}(u)=i\}=
\sum_{v\in\T^{\mathsf{MGW}}:|v|=t-1}L^{(\type^{\star}(v),i)}(v), \quad \text{for } i\in [J],
\end{align*}
that is, $\sZ_t^i$ represents the number of particles of type $i$ in the $t$-th generation, or more precisely the number of vertices $u\in \T^{i_0}$ with $|u|=t$ and $\type^{\star}(u)=i$. 
When referring to multitype Galton-Watson processes we shall always have in mind both $(\sZ_t)_{t\in \N}$ and its genealogical Galton-Watson tree $\T^{\mathsf{MGW}}$, with the corresponding type function $\type^{\star}$, i.e.\ the pair $(\T^{\mathsf{MGW}},\type^{\star})$.

Denote by $m_{ij}=\E\left[L^{(i,j)}\right]$ the expectation of the random variable $L^{(i,j)}$, for all $i,j\in [J]$ and by $M=(m_{ij})_{i,j\in [J]}$ the first moment  or the mean offspring matrix. We have  $\E L=M$. 
Also, for $t\in\mathds{N}$ we denote by $\mathcal{Z}_t$ the total number of offspring in the $t$-th generation, that is
$\mathcal{Z}_t=\sum_{i=1}^J\sZ_t^i$.

{We assume, for simplicity, in the following that all entries of $M$ are finite.} If there exists $n\in \N$ such that all entries of $M^n$ are strictly positive, then $(\sZ_t)_{t\in\N}$ is called \textit{positive regular}. If each particle has exactly one child, then $(\sZ_t)_{t\in\N}$ is called singular. 
It is well known that if $(\sZ_t)_{t\in\N}$ is positive regular and 
not singular, then if the spectral radius $\rho(M)\leq 1$, the MGW process $(\sZ_t)_{t\in \N}$ dies out almost surely. If $\rho(M)>1$, then it survives with positive probability $\mathbb{P}\left[\mathsf{Surv^{MGW}}\right]$ and we will denote by $\sigma^{\mathsf{MGW}}=1-\mathbb{P}\left[\mathsf{Surv^{MGW}}\right]$ its complement, where
$$\mathsf{Surv^{MGW}}=\left\{\mathcal{Z}_t=\sum_{i=1}^J\sZ^i_t> 0,\ \forall t\in\mathds{N}\right\},$$
so
$$
\sigma^{\mathsf{MGW}}=1-\mathbb{P}\left[\mathsf{Surv^{MGW}}\right]
\begin{cases}
=1 &,\quad \text{if } \rho(M)\leq 1\\
<1 &,\quad \text{if }  \rho(M)>1.
\end{cases}$$

Note that in the case of multitype Galton-Watson trees $(\T^{\mathsf{MGW}},\type^{\star})$, the type function $\type^{\star}$ is random and constructed with the help of the offspring distribution matrix $L$, while in the infection model the type function $\type$ is deterministic, and each vertex in $\mathbb{K}_n$ comes at the beginning with its own type.
\paragraph{Assumptions.} Our results will be proven under the following assumptions concerning the proportions and the weights of vertices of type $i$, and on the spectral radius $\rho(M)$.

\textsf{Assumption 1}: For every $i\in [J]$, both the proportions $\gamma_i:=\frac{n_i}{n}\in (0,1)$ of vertices of type $i$ and the weights $\alpha_i:=n_ip_i\in (0,1)$  of type $i$ group do not depend on $n$, thus are constant. 
We have $\sum_{i=1}^J\gamma_i=1$.

\textsf{Assumption 2:} 
{All entries of $M$ are finite and} $\rho(M)>1$.

\section{Coupling coupon collector's with Poisson processes}\label{sec:pois-pr}

Our proof strategy involves approximating the discrete-time process of collecting coupons (vertices in our case) by a
continuous-time process. Instead of the coupon collector drawing a random coupon  at integer time points, he draws a random coupon  (from the same distribution) at times given by a Poisson process with parameter $1$. The times at which any given vertex is drawn is then a thinned Poisson process, and the Poisson processes associated with different vertices are independent. 
Working in the continuous rather than in the discrete setting simplifies calculations.

Suppose we are collecting coupons at times chosen according to a Poisson process with rate $1$: let $(\mathcal{P}_t)_{t\geq 0}$ be a Poisson process with rate $1=\sum_{i=1}^Jn_ip_i$, so that each Poisson event associated with this process is a sampled coupon (infected individual). Thus $\mathcal{P}_t$ represents the total number of attempts to infect vertices by time $t$ (out of these vertices, some of them may have been infected more than once, and we are only interested in the number of different ones). 

We denote the number of vertices of type $i$ infected by time $t$ by $\PoisN_t^i$, when we refer to the continuous time process, while the notation $\sN_t^i$ is kept for the number of vertices of type $i$ infected up to time $t$ in the discrete time process. The relation between the continuous versus discrete processes can be formalized in the following way:
\begin{align*}
 \PoisN_t^i:=\sN_{\mathcal{P}_t}^i\quad \text{and}\quad \PoisCN_t:=\mathcal{N}_{\mathcal{P}_t}\quad \text{and}\quad  \PoisS_t:=\sS_{\mathcal{P}_t}.
\end{align*}
Moreover,
we denote by 
\[ \tilde\tau_n:= \inf\{t\in\mathbb{R} \mid \mathcal{P}_t \ge \tau_n\} = t_{\tau_n}\]
the total duration of the  Poisson process, and note that $\tau_n=\cP_{\tilde\tau_n}$.
We also use the notation $\PoisN_t=(\PoisN^1_t,\ldots,\PoisN^J_t)$.
At times, it will be useful to enumerate the vertices of  $\mathbb{K}_n$ as $\{v_1,v_2,\ldots,v_n\}$. 
With each vertex $v_k \in\mathbb K_n$, $k=1,\ldots,n$ we associate a Poisson process 
$\mathcal{P}_t(k)$ which represents the number of times $v_k$ has been collected (or 
contacted by an infected individual)
 by time $t$. Then
$(\mathcal{P}_t(k))_{t\geq 0}$ are independent Poisson processes with rates $p_{\type(v_k)}$. More precisely
$\left\{(\mathcal{P}_t(k))_{t\geq 0}: \ v_k\in \mathbb{K}_n, \type(v_k)=i\right\}$
is a sequence of $n_i$ independent Poisson processes with rate $p_i$, for any $i\in [J]$. Therefore, we can write 
$\mathcal{P}_t=\sum_{k=1}^n\mathcal{P}_t(k)$. 
For each $k=1,\ldots,n$ denote by $E_k$ the time of the first event in the Poisson process $(\mathcal{P}_t(k))_{t\geq 0}$ associated with $v_k\in\mathbb K_n$, that is, the first time when vertex $v_k$ was reached and infected. The random variables $(E_k)_{k=1,\ldots,n}$ are independent (because they are associated to independent Poisson processes) and exponentially distributed with rate  $p_{\type(v_k)}$.
We are interested only in the number of different coupons sampled, that is, only in the realization time of the first event in the Poisson processes mentioned above. With this notation and in view of the independence, we have now an alternative way of writing $\PoisN^i_t$ as sum of i.i.d.\ random variables: for $i\in [J]$ we have
\begin{align*}
\PoisN^i_t & =\sum_{v_k\in\mathbb{K}_n; \type(v_k)=i}\ind{E_k\leq t},\quad \text{and  } E_k\sim Exp(p_i)
\end{align*}
and hence
\begin{align*}
\mathbb{E}[\PoisN^i_t]=n_i\mathbb{P}[E_k\leq t]=n_i(1-e^{-p_it}),
\end{align*}
and
$$\Var[\PoisN^i_{t}]=n_i\left[(1-e^{-p_i t})-(1-e^{-p_i t})^2\right]=n_ie^{-p_i t}(1-e^{-p_i t}).$$
With this coupling, we are ready to proceed with limit theorems for the processes\\ $\left((\PoisN^1_{n_1s},\ldots,\PoisN^J_{n_Js})\right)_{s\geq 0}$ and the corresponding acquisition times $\left(\tilde T^1_{\floor{n_1 q}},\ldots,\tilde T^J_{\floor{n_Jq}}\right)_{q\in [0,1)}$
as $n\to\infty$. In both random vectors, each random variable depends on $n$, but we will not write the dependence explicitly in order to keep the notation simpler.

\subsection{Limit theorems for infected individuals and acquisition times}\label{subsec:lim-poi}

For simplicity, we introduce some additional notation.
\begin{itemize}\setlength\itemsep{0em}
\vspace{-0.25cm}
\item We will write $\mathsf{p}=(p_1,\ldots,p_J)$ and $\mathsf{\gamma}:=(\gamma_1,\ldots,\gamma_J)$.
\item  We denote  by $\beta_i=\frac{\alpha_i}{\gamma_i}=np_i$.
\item We write $w_i(s):=\gamma_i(1-e^{-\beta_i s})$, and $\mathsf{w}(s):=(w_1(s),\ldots,w_J(s))$, for $s\in \R$, so we have
\begin{equation}\label{eq:weights}
\inp{\mathsf{w}(s)}{\mathsf{1}}=\sum_{i=1}^J \gamma_i(1-e^{-\beta_i s})=1-\sum_{i=1}^J \gamma_ie^{-\beta_i s},
\end{equation}
where $\mathbf{1}$ is the all ones vector.
\end{itemize}

\begin{proposition}\label{prop:clt-n_i} Suppose that \textsf{Assumption 1} holds.
For every  $i\in [J]$, for the continuous-time coupon collector's process $(\PoisN_t^i)_{t\in \N}$  we have the following weak convergence in the Skorokhod space $\mathcal{D}([0,\infty),\mathbb{R})$ endowed with the standard  $\mathbf{J}_1$ topology:
\begin{align*}
\left\{\frac{1}{\sqrt{n_i}}\left(\PoisN^i_{ns}-n_i\left(1-e^{-\beta_i s}\right)\right);\ s\geq 0\right\}& \stackrel{\law}{\longrightarrow}
\left\{\mathsf{X}^{(i)}_s;\ s\geq 0\right\},\quad \text{ as }n\to\infty.
\end{align*}
where, for every $i\in [J]$,  $(\mathsf{X}^{(i)}_s)_{s\geq 0}$ is a centered Gaussian process with 
$$\Cov\left[\mathsf{X}^{(i)}_s,\mathsf{X}^{(i)}_t\right]=e^{-\beta_i t}\left(1-e^{-\beta_i s}\right)>0,$$
for all $s,t\in [0,\infty)$ with $s\le t$.
\end{proposition}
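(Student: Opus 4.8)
The plan is to recognize the recentered process as a sum of i.i.d.\ centered indicator processes and then invoke a functional central limit theorem. Recall from the Poissonization that for each $i \in [J]$ we have the exact representation
\[
\PoisN^i_t = \sum_{v_k \in \mathbb{K}_n:\ \type(v_k)=i} \ind{E_k \le t},
\]
where the $E_k$ (for the $n_i$ vertices of type $i$) are i.i.d.\ $\mathrm{Exp}(p_i)$ random variables. Hence, with $t = ns$ and $\beta_i = np_i$, the quantity $\frac{1}{\sqrt{n_i}}\big(\PoisN^i_{ns} - n_i(1-e^{-\beta_i s})\big)$ is exactly $\frac{1}{\sqrt{n_i}}\sum_{k=1}^{n_i}\big(\ind{E_k \le ns} - \mathbb{P}[E_k \le ns]\big)$. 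Note $\mathbb{P}[E_k \le ns] = 1 - e^{-p_i ns} = 1 - e^{-\beta_i s}$, so the centering matches. The first step is therefore to write this out cleanly and observe we are looking at a normalized empirical process of i.i.d.\ Bernoulli-type variables indexed by $s$.

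The second step is to identify the candidate limit via its covariance. For $s \le t$,
\[
\Cov\big(\ind{E_k \le ns},\ \ind{E_k \le nt}\big) = \mathbb{P}[E_k \le ns] - \mathbb{P}[E_k \le ns]\mathbb{P}[E_k \le nt] = (1-e^{-\beta_i s}) - (1-e^{-\beta_i s})(1-e^{-\beta_i t}) = e^{-\beta_i t}(1-e^{-\beta_i s}),
\]
which is precisely the stated covariance of $\mathsf{X}^{(i)}$; since the $E_k$ are independent across $k$, the covariance of the normalized sum at times $s,t$ equals this common single-variable covariance, independent of $n_i$. So the finite-dimensional distributions converge to those of the claimed centered Gaussian process by the multivariate CLT (a Cramér–Wold argument applied to linear combinations of the increments, each a triangular array of i.i.d.\ bounded random variables, so Lindeberg is trivial). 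Actually there is a subtlety worth noting: the distribution of $\ind{E_k \le ns}$ depends on $n$ through $ns$ versus $p_i = \alpha_i/n_i$, but $p_i \cdot ns = \beta_i s$ is constant in $n$, so in fact the summands are i.i.d.\ with a fixed law not depending on $n$, and this is a genuine i.i.d.\ sum (not merely a triangular array) — the classical multivariate CLT applies directly.

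The third step, and the main technical point, is tightness in the Skorohod $\mathbf{J}_1$ topology on $\mathcal{D}([0,\infty),\mathbb{R})$. The cleanest route is to exploit monotonicity: each path $s \mapsto \PoisN^i_{ns}$ is nondecreasing and piecewise constant, so the recentered process is a difference of a monotone (random) function and a smooth deterministic function. One can either cite the classical result that the empirical process of i.i.d.\ real random variables converges to a (time-changed) Brownian bridge in $\mathcal{D}$ — here the variables $E_k$ have continuous distribution function $s \mapsto 1 - e^{-\beta_i s}$, and $\mathsf{X}^{(i)}_s = B^{\circ}(1-e^{-\beta_i s})$ for a standard Brownian bridge $B^{\circ}$ on $[0,1]$, whose covariance $u \wedge v - uv$ evaluated at $u = 1-e^{-\beta_i s}$, $v = 1 - e^{-\beta_i t}$ gives exactly $e^{-\beta_i t}(1-e^{-\beta_i s})$ for $s \le t$ — or verify tightness directly via a moment bound on increments, e.g.\ controlling $\mathbb{E}\big[(\PoisN^i_{nt} - \PoisN^i_{ns})^2 (\PoisN^i_{nu} - \PoisN^i_{nt})^2\big]$ by a product of increments of the deterministic compensator (Billingsley's criterion for $\mathcal{D}$). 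One must also handle the non-compact time interval $[0,\infty)$: since $1 - e^{-\beta_i s} \to 1$ as $s \to \infty$, the process stabilizes, and convergence on each $[0,T]$ together with a tail estimate (the remaining mass $n_i e^{-\beta_i T}$ is uniformly small) upgrades to convergence on $[0,\infty)$. I expect the tightness/tail-control step to be where the real work lies; the finite-dimensional convergence and covariance computation are essentially immediate from the i.i.d.\ structure established by the Poissonization.
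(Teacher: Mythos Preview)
Your proposal is correct and follows essentially the same approach as the paper: both recognize $\PoisN^i_{ns}$ as the empirical process of $n_i$ i.i.d.\ $\mathrm{Exp}(p_i)$ random variables evaluated at $ns$ (with $p_i\cdot ns=\beta_i s$ fixed), and deduce the functional CLT. The paper simply cites \cite[Theorem~14.3]{Bill99} for the empirical-process CLT, whereas you spell out the covariance computation, the finite-dimensional convergence, and the tightness argument that this citation encapsulates.
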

\begin{proof}
The claim follows from \cite[Theorem 14.3]{Bill99}, since $\PoisN^i_t = \sum_{k=1}^{n_i} \mathbf{1}_{\{E_k\le t\}}$ for i.i.d.\ random variables $E_k\sim Exp(p_i)$, $k=1, \dots, n_i$.
\end{proof}

Since with the Poissonization technique, the processes $(\PoisN^i_t)_{t\in \N}$ are independent (because they are measurable functions of independent Poisson processes) we also have their joint convergence as noticed in the following proposition. 

\begin{proposition}\label{p:CLT_Nt-joint}
Suppose that \textsf{Assumption 1} holds.
For the 
coupon collector's process $(\PoisN_t)_{t\in \N}$ with $J$ different types of coupons and $\PoisN_t=(\PoisN^1_t,\PoisN^2_t,\ldots,\PoisN^J_t)$,  we have the following (joint) weak convergence in the Skorokhod space $\mathcal{D}([0,\infty),\mathbb{R}^J)$ endowed with the standard  $\mathbf{J}_1$ topology:
\begin{align*}
\left\{\frac{1}{\sqrt{n}}\left(\vec{\sN}_{ns}-n\mathsf{E}_s\right);\ s\geq 0\right\}& \stackrel{\law}{\longrightarrow}
\left\{\mathsf{X}_s;\ s\geq 0\right\}
\end{align*}
where the $J$-dimensional processes above are defined as  
\begin{align*}
\vec{\sN}_{ns}:=\left(\frac{\PoisN^1_{ns}}{\sqrt{\gamma_1}},\ldots,\frac{\PoisN^J_{ns}}{\sqrt{\gamma_J}}\right).
\end{align*}
The expectation vector $\mathsf{E}_s$ is given by 
$\mathsf{E}_s:=\left(\sqrt{\gamma_1}(1- e^{-\beta_1 s}),\ldots,\sqrt{\gamma_J}(1- e^{-\beta_J s})\right)$,
and the $J$-dimensional stochastic process $(\X_s)_{s\geq 0}$ with $\X_s=(\X^{(1)}_s,\ldots, \X^{(J)}_s)$ has independent components $\X^{(1)}_s,\ldots,\X^{(J)}_s$ which are centered Gaussian processes with covariances
$$\Cov\left[\mathsf{X}^{(i)}_s,\mathsf{X}^{(i)}_t\right]=e^{-\beta_i t}\left(1-e^{-\beta_i s}\right)>0, \quad \text{for all } s,t\in [0,\infty) \text{ with } s\le t.$$
\end{proposition}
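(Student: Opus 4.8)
The statement follows by combining Proposition~\ref{prop:clt-n_i} for the individual coordinates with the independence of the coordinate processes and the continuity of the limiting Gaussian processes; the authors' remark that it is "noticed" as a corollary is accurate, but the one structural subtlety below is worth spelling out.

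First I would make the reduction explicit. For $i\in[J]$ and $s\ge0$ set
\[
Y^{n,i}_s:=\frac{1}{\sqrt{n_i}}\Big(\PoisN^i_{ns}-n_i\big(1-e^{-\beta_i s}\big)\Big).
\]
Under \textsf{Assumption~1} we have $n_i=\gamma_i n$, and a one-line computation shows that the $i$-th coordinate of $\frac{1}{\sqrt n}\big(\vec\sN_{ns}-n\mathsf{E}_s\big)$ is exactly $Y^{n,i}_s$; so it suffices to prove that the random element $\big(Y^{n,1},\ldots,Y^{n,J}\big)$ of $\mathcal D([0,\infty),\R^J)$ converges in law to $(\X^{(1)},\ldots,\X^{(J)})$ with independent components. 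Now recall from the Poissonization that $\PoisN^i_t=\sum_{k:\type(v_k)=i}\ind{E_k\le t}$ is a measurable functional of the exponential clocks $\{E_k:\type(v_k)=i\}$, equivalently of the Poisson processes $\{(\mathcal P_t(k))_{t\ge0}:\type(v_k)=i\}$; these $J$ collections are mutually independent, and hence so are the $\mathcal D([0,\infty),\R)$-valued random elements $Y^{n,1},\ldots,Y^{n,J}$, for every $n$.

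Next I would upgrade the $J$ marginal convergences of Proposition~\ref{prop:clt-n_i} to joint convergence by applying Skorohod's representation theorem separately to each of the independent sequences $(Y^{n,i})_n$ on the Polish space $\mathcal D([0,\infty),\R)$: on an auxiliary probability space one obtains copies $\widehat Y^{n,i}\to\widehat\X^{(i)}$ almost surely. Passing to the product of these $J$ spaces preserves the joint law (which is the product law, because the $Y^{n,i}$ are independent); in particular the limit components $\widehat\X^{(1)},\ldots,\widehat\X^{(J)}$ are independent — this is the independence asserted in the proposition — and $\widehat Y^{n,i}\to\widehat\X^{(i)}$ holds almost surely for all $i$ simultaneously. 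Since each $\X^{(i)}$ has continuous paths, a.s.\ $\mathbf J_1$-convergence $\widehat Y^{n,i}\to\widehat\X^{(i)}$ is a.s.\ locally uniform convergence on $[0,\infty)$; because the max-norm on $\R^J$ controls each coordinate, the vector $(\widehat Y^{n,1},\ldots,\widehat Y^{n,J})$ then converges a.s.\ locally uniformly, hence in the $\mathbf J_1$ topology of $\mathcal D([0,\infty),\R^J)$, to the continuous process $(\widehat\X^{(1)},\ldots,\widehat\X^{(J)})$. Weak convergence on $\mathcal D([0,\infty),\R^J)$ follows, and the covariances are read off from Proposition~\ref{prop:clt-n_i}.

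The only genuinely delicate point — and the place where a careless argument would fail — is that coordinatewise convergence in the scalar Skorohod space is in general strictly weaker than convergence in the vector-valued Skorohod space: the $\mathbf J_1$ topology on $\mathcal D([0,\infty),\R^J)$ requires a \emph{single} time change bringing all coordinates close to the limit at once, whereas a priori distinct coordinates could jump at nearby but different times. Here this obstruction is absent precisely because the limit is continuous, which is exactly what licenses replacing $\mathbf J_1$-convergence by locally uniform convergence above. Should one prefer to avoid Skorohod's representation, the same conclusion is reached by verifying $C$-tightness of the $\R^J$-valued family — each coordinate is $C$-tight by Proposition~\ref{prop:clt-n_i}, and $C$-tightness passes to finite vectors via the modulus-of-continuity criterion — together with convergence of the finite-dimensional distributions, which factor over $i$ by independence and converge by Proposition~\ref{prop:clt-n_i}.
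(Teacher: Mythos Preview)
Your proof is correct and follows the same route as the paper: the paper merely remarks, in the sentence preceding the proposition, that ``since with the Poissonization technique, the processes $(\PoisN^i_t)_{t\in \N}$ are independent \ldots\ we also have their joint convergence,'' and gives no further argument. You have in fact been more careful than the paper, correctly isolating and resolving the one nontrivial point the paper glosses over --- that coordinatewise $\mathbf{J}_1$-convergence does not in general imply convergence in $\mathcal D([0,\infty),\R^J)$, and that it does here because the limiting Gaussian process has continuous paths; both your Skorohod-representation argument and your alternative via $C$-tightness are valid ways to close this gap.
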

Since $\PoisCN_t=\sum_{i=1}^J\PoisN^i_t$, an immediate corollary of Proposition \ref{p:CLT_Nt-joint} is the following. 
\begin{corollary}\label{c:FCLT_N}
We have the following weak convergence in the  Skorokhod space $\mathcal{D}([0,\infty),\mathbb{R})$ endowed with the standard  $\mathbf{J}_1$ topology:
$$
 \left\{\dfrac{1}{\sqrt{n}}\left(\PoisCN_{ns}-n \inp{\mathsf{w}(s)}{\mathsf{1}} \right); s\geq 0\right\} 
 \stackrel{\law}{\longrightarrow}
\left\{\mathsf{X}^{\cN}_s;\ s\geq 0\right\}
$$
where $\mathsf{X}^{\cN}_s$ is a centered Gaussian processes with 
$$\mathsf{Cov}[\mathsf{X}^{\cN}_s, \mathsf{X}^{\cN}_t] = \sum_{i=1}^J\gamma_i \cdot (1- e^{-\beta_i s}) \cdot e^{-\beta_i  t},\quad \text{for all } s,t\in [0,\infty) \text{ with } s\le t.$$
\end{corollary}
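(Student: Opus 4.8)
The plan is to derive Corollary \ref{c:FCLT_N} directly from Proposition \ref{p:CLT_Nt-joint} by applying the continuous mapping theorem. First I would observe that the scalar process we are after can be written as a fixed linear functional of the vector process appearing in Proposition \ref{p:CLT_Nt-joint}: indeed,
\begin{align*}
\frac{1}{\sqrt{n}}\left(\PoisCN_{ns}-n\inp{\mathsf{w}(s)}{\mathsf{1}}\right)
&=\sum_{i=1}^J\sqrt{\gamma_i}\cdot\frac{1}{\sqrt{n}}\left(\frac{\PoisN^i_{ns}}{\sqrt{\gamma_i}}-\sqrt{\gamma_i}\,(1-e^{-\beta_i s})\right),
\end{align*}
using $n_i=\gamma_i n$ so that $\PoisN^i_{ns}=\sqrt{\gamma_i}\cdot(\PoisN^i_{ns}/\sqrt{\gamma_i})$ and $n\inp{\mathsf{w}(s)}{\mathsf{1}}=n\sum_i\gamma_i(1-e^{-\beta_i s})=\sum_i\sqrt{\gamma_i}\cdot n\sqrt{\gamma_i}(1-e^{-\beta_i s})$. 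In other words, the left-hand side is precisely $\inp{\mathsf{a}}{\cdot}$ applied to the normalized vector process $\frac{1}{\sqrt{n}}(\vec{\sN}_{ns}-n\mathsf{E}_s)$, where $\mathsf{a}=(\sqrt{\gamma_1},\ldots,\sqrt{\gamma_J})$.

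Next I would invoke the fact that the map $\Phi:\mathcal{D}([0,\infty),\R^J)\to\mathcal{D}([0,\infty),\R)$ given by $\Phi(f)(s)=\inp{\mathsf{a}}{f(s)}$ is continuous with respect to the $\mathbf{J}_1$ topologies (a linear coordinate combination does not disturb the time-deformation structure, so the same time changes that witness convergence in $\R^J$ witness it in $\R$). Combining this with Proposition \ref{p:CLT_Nt-joint} and the continuous mapping theorem yields
\begin{align*}
\left\{\frac{1}{\sqrt{n}}\left(\PoisCN_{ns}-n\inp{\mathsf{w}(s)}{\mathsf{1}}\right);s\ge0\right\}\xrightarrow{law}\left\{\inp{\mathsf{a}}{\X_s};s\ge0\right\},
\end{align*}
so we may set $\mathsf{X}^{\cN}_s:=\sum_{i=1}^J\sqrt{\gamma_i}\,\X^{(i)}_s$. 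This is a centered Gaussian process, being a finite linear combination of the independent centered Gaussian processes $\X^{(i)}$.

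Finally I would compute the covariance. By independence of the components $\X^{(i)}$ and the covariance formula from Proposition \ref{p:CLT_Nt-joint}, for $s\le t$,
\begin{align*}
\Cov[\mathsf{X}^{\cN}_s,\mathsf{X}^{\cN}_t]
=\sum_{i=1}^J\gamma_i\,\Cov[\X^{(i)}_s,\X^{(i)}_t]
=\sum_{i=1}^J\gamma_i\,e^{-\beta_i t}(1-e^{-\beta_i s}),
\end{align*}
which is exactly the claimed expression. I do not expect a serious obstacle here; the only point requiring a word of care is the continuity of the linear functional in the $\mathbf{J}_1$ topology (as opposed to, say, a product-space subtlety), but since $\Phi$ acts coordinatewise and linearly this is standard and can be stated without proof or with a one-line justification.
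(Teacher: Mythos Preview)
Your proposal is correct and matches the paper's approach: the paper simply states that the corollary is immediate from Proposition~\ref{p:CLT_Nt-joint}, and your argument---applying the linear functional $\langle(\sqrt{\gamma_1},\ldots,\sqrt{\gamma_J}),\cdot\rangle$ via the continuous mapping theorem and then reading off the covariance---is exactly how one makes that immediacy explicit. (Minor typo: in your first display the centering term should be $n\sqrt{\gamma_i}(1-e^{-\beta_i s})$, as your subsequent sentence correctly indicates.)
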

For the vector of acquisition times processes $\left(\tilde T^1_{\floor{n_1 q}},\ldots,\tilde T^J_{\floor{n_Jq}}\right)_{q\in [0,1)}$ of the continuous time process we obtain a similar behavior. Recall that by $ \tilde T^i_{n_iq}  :=\tilde T^i_{\floor{n_iq}}$ for $i\in [{J}]$, we denote the acquisition time of the $\floor{n_i q}$-th coupon of type $i$.
\begin{proposition}\label{prop:clt-T_i} 
Suppose that \textsf{Assumption 1} holds.
For every  $i\in [J]$,  for the  acquisition times $\tilde T^i_k$ associated with the process $\PoisN^i_t$ of collecting vertices of type $i$, we have the following weak convergence in the Skorokhod space $\mathcal{D}([0,1),\mathbb{R})$endowed with the standard  $\mathbf{J}_1$ topology:
\begin{align*}
 &\left\{ \sqrt{n_i} \left( \frac{1}{n} \tilde T^i_{n_iq} - \frac{1}{\beta_i}\log\frac{1}{1-q}\right);  q\in [0,1)\right\} \stackrel{\law}{\longrightarrow} \left\{ \mathsf{B}^i\big(\sigma^i(q) \big) ;\  q\in[0,1)\right\}
\end{align*}
where, for $i\in [J]$,   $\left(\mathsf{B}^i(\sigma^i(q)\right)_{q\in [0,1)}$ is a standard Brownian motion with  
$\sigma^i(q)=\frac{q}{\beta_i^2(1-q)}$ 
{(meaning that the limiting process at time $q$ coincides with the Brownian at time $\sigma^i(q)$).}
\end{proposition}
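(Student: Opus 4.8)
The plan is to deduce this from the functional central limit theorem for the infected-count process already established in Proposition~\ref{prop:clt-n_i}, via an inversion argument. The key observation is that the acquisition time $\tilde T^i_k$ and the counting process $\PoisN^i_t$ are generalized inverses of each other: $\tilde T^i_{n_iq} \le t$ if and only if $\PoisN^i_t \ge n_i q$. So the first step is to record this duality precisely and to translate the (centered, rescaled) fluctuations of $\PoisN^i$ into fluctuations of $\tilde T^i$ through a first-order Taylor expansion of the deterministic centering function. Writing $f_i(s) = \gamma_i(1-e^{-\beta_i s})$ for the fluid limit of $\PoisN^i_{ns}/n$, its inverse is $g_i(q) = \frac{1}{\beta_i}\log\frac{1}{1-q}$, which is exactly the centering appearing in the statement; one checks $g_i'(q) = \frac{1}{\beta_i(1-q)}$.

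Next I would invoke a continuous-mapping / inversion theorem for the Skorohod space — the standard result (see e.g. Whitt, or Billingsley~\cite{Bill99}) that if $\{\sqrt{n_i}(\frac{1}{n}\PoisN^i_{ns} - f_i(s))\}_s \xrightarrow{law} \{\mathsf{X}^{(i)}_s\}_s$ with $f_i$ continuously differentiable and strictly increasing, then the inverse processes converge: $\{\sqrt{n_i}(\frac{1}{n}\tilde T^i_{n_i q} - g_i(q))\}_q \xrightarrow{law} \{-\mathsf{X}^{(i)}_{g_i(q)}/f_i'(g_i(q))\}_q$. Here one must be slightly careful because $\PoisN^i$ is normalized by $\sqrt{n_i}$ while its argument is scaled by $n$, not $n_i$; since $n_i = \gamma_i n$ with $\gamma_i$ constant by Assumption~1, the two scalings differ only by the constant $\sqrt{\gamma_i}$, which is harmless. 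The limiting process is then $Y^i(q) := -\mathsf{X}^{(i)}_{g_i(q)}/f_i'(g_i(q))$, a centered Gaussian process, and it remains only to compute its covariance and identify it as a time-changed Brownian motion.

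For the covariance computation: from Proposition~\ref{prop:clt-n_i}, $\Cov[\mathsf{X}^{(i)}_s,\mathsf{X}^{(i)}_t] = e^{-\beta_i t}(1-e^{-\beta_i s})$ for $s\le t$. Note $f_i'(s) = \gamma_i\beta_i e^{-\beta_i s}$, and if $q\le q'$ then $g_i(q)\le g_i(q')$, with $e^{-\beta_i g_i(q)} = 1-q$. Substituting, for $q \le q'$,
\[
\Cov[Y^i(q),Y^i(q')] = \frac{e^{-\beta_i g_i(q')}(1-e^{-\beta_i g_i(q)})}{f_i'(g_i(q))f_i'(g_i(q'))} = \frac{(1-q')\,q}{\gamma_i^2\beta_i^2(1-q)(1-q')} = \frac{q}{\gamma_i^2\beta_i^2(1-q)}.
\]
This is the covariance function $\sigma^i(q\wedge q')$ of a standard Brownian motion run at time $\sigma^i(q) = \frac{q}{\beta_i^2(1-q)}$ — up to the factor $1/\gamma_i^2$, which again reflects the $\sqrt{n_i}$ versus $\sqrt{n}$ bookkeeping; one absorbs $\sqrt{\gamma_i}$ into the normalization consistently so that the stated clean form emerges (equivalently, one writes the FCLT in Proposition~\ref{prop:clt-n_i} with the $1/\sqrt{n_i}$ scaling and carries it through, in which case $\gamma_i$ disappears). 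The fact that the covariance depends only on $q\wedge q'$ and is linear in it confirms independence of increments, so $\{Y^i(q)\}$ is precisely $\{\mathsf{B}^i(\sigma^i(q))\}$.

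The main obstacle is the rigorous justification of the inversion step in the $\mathbf{J}_1$ topology: one needs that the inverse map is continuous at the (continuous, strictly increasing) limit path $f_i$, and one needs to control the discretization error coming from the floor function $\floor{n_i q}$ in $\tilde T^i_{\floor{n_i q}}$ — this is where, as the excerpt already notes, one uses that convergence in $\mathbf{J}_1$ to a continuous limit upgrades to uniform convergence on compacts, making the $O(1/n)$ rounding negligible after multiplication by $\sqrt{n_i}$ on compact $q$-intervals $[0,1-\delta]$. One then extends from $[0,1-\delta]$ to $[0,1)$ by the arbitrariness of $\delta$ (the limiting Brownian time change $\sigma^i$ blows up only at $q=1$, consistent with working on $[0,1)$).
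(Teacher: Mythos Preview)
Your approach is essentially the same as the paper's: exploit the generalized-inverse duality between $\PoisN^i$ and $\tilde T^i$, feed the FCLT of Proposition~\ref{prop:clt-n_i} through an inversion theorem, and identify the limit covariance as that of a time-changed Brownian motion. The paper makes the inversion step explicit by first Taylor-expanding $f_i$ around $f_i^{-1}(s)$ (with $f_i$ defined as your $g_i$, i.e.\ $f_i(q)=\beta_i^{-1}\log(1-q)^{-1}$), showing the quadratic remainder vanishes uniformly on compacts in probability, and then invoking Vervaat's theorem \cite{verwaat72} for the actual switch from $\PoisN^i$ to its inverse $\tilde T^i$; your generic reference to Whitt/Billingsley plays the same role. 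Your $\gamma_i^2$ discrepancy in the covariance is indeed just the $n$ versus $n_i$ bookkeeping you flag: the paper avoids it by normalizing $\PoisN^i_{ns}$ by $n_i$ (not $n$) throughout, so that the fluid limit is $1-e^{-\beta_i s}$ with derivative $\beta_i e^{-\beta_i s}$ and the $\gamma_i$ never appears.
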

\begin{proof}
We have the following reciprocal equations that relate the distribution of the number of collected coupons $\PoisN^i_t$ to the distribution of the arrival times $\tilde T^i_s$ of the respective coupons: for every $i=1,2,\ldots,J$
$$\{\PoisN^i_t< s \} = \{\tilde T^i_s>t \},$$
that is $\tilde T^i_s$ is a generalized inverse of $\PoisN^i_t$.
For $i\in [J]$, we write
\[ f_i(q):= \frac{1}{\beta_i} \cdot\log\frac{1}{1-q}, \quad q\in[0,1), \quad \text{and}\quad 
f^{-1}_i(s):=1-e^{-\beta_i s}, \quad s\in[0,\infty),\]
so that  $f^{-1}_i(s)$ is the inverse of $f_i(s)$. We apply a Taylor series expansion to $f_i$ in $\frac{\PoisN^i_{ns}}{n_i}$.
Indeed, there is $q_s\in\left[ \min\{ \PoisN^i_{ns}/n_i,f_i^{-1}(s)\}, \max\{ \PoisN^i_{ns}/n_i,f_i^{-1}(s)\} \right]$ such that
\begin{equation}\label{eq:taylor-exp}
f_i\left( \frac{\PoisN^i_{ns}}{n_i}\right) -s = \underbrace{f_i'(f_i^{-1}(s))}_{=\frac{-1}{\beta_ie^{-\beta_i s}}}\cdot \left(\frac{\PoisN^i_{ns}}{n_i} -f_i^{-1}(s) \right) + \frac{1}{2} f_i''(q_s) \cdot \left(\frac{\PoisN^i_{ns}}{n_i} -f_i^{-1}(s) \right)^2 .
\end{equation}
If we show that for every fixed $i\in [J]$
\begin{equation} \label{eq:comp_conv}
\sup_{s\in C} f_i''(q_s) \cdot \left(\frac{\PoisN^i_{ns}}{n_i} -f_i^{-1}(s) \right)^2\longrightarrow 0, \quad \text{in probability, as }n\to\infty 
 \end{equation}
for every compact set $C\subseteq [0,\infty)$, then according to \cite[Proposition VI.1.17 b)]{jacod-shiryaev} 
\[ f_i''(q_s) \cdot \left(\frac{\PoisN^i_{ns}}{n_i} -f_i^{-1}(s) \right)^2\stackrel{\law}{\longrightarrow} 0 \]
in the Skorokhod topology as $n\to\infty$. So let $C\subseteq [0,\infty)$ be compact, and denote by $s_0$ the supremum of $C$.  Choose $q_1\in(f_i^{-1}(s_0),1)$. 
From Proposition \ref{prop:clt-n_i}    
we also have that 
$\frac{\PoisN^i_{ns}}{n_i} \to f_i^{-1}(s)$ in probability as $n\to\infty$, uniformly on compact sets of $\R_+$, so
$$\lim_{n\to\infty}\mathbb{P}\left[\frac{\PoisN^i_{ns_0}}{n_i}>q_1\right]=0.$$
 Put $M:=f''_i(q_{s_0})=\sup_{s\in C}f_i''(q_s)$. Observe that $f_i''(q)=\frac{1}{\beta_i(1-q)^2}>0$ is monotonically increasing on $[0,1)$.  
 Then 
\[\mathbb{P}\left[M>f_i''(q_1)\right] \le \mathbb{P}\left[\frac{\PoisN^i_{ns_0}}{n_i}>q_1\right]\longrightarrow 0, \]
i.e.\ $M$ is bounded in probability.  
Since the operator which maps a function $x:[0,\infty)\to\mathbb{R}$ to $\sup_{t\in C} x(t)$ is continuous at continuous functions $x:[0,\infty)\to\mathbb{R}$, the continuous mapping theorem in its sharp version (see e.g.\ \cite[Theorem 2.7]{Bill99}) implies  \eqref{eq:comp_conv}.
From  \eqref{eq:taylor-exp} it follows that $\left(f_i\left( \frac{\PoisN^i_{ns}}{n_i}\right) -s\right)$ and  $f_i'(f_i^{-1}(s))\cdot \left(\frac{\PoisN^i_{ns}}{n_i} -f_i^{-1}(s) \right) $ have the same limit law, since the remainder in the expansion converges to 0 in probability, and so Proposition \ref{prop:clt-n_i} yields 
\begin{align*}
\sqrt{n_i}\left(f_i\left( \frac{\PoisN^i_{ns}}{n_i}\right) -s\right)\stackrel{\law}{\sim}f_i'\left(f_i^{-1}(s)\right)\underbrace{\frac{1}{\sqrt{n_i}}\left(\PoisN^i_{ns}- n_i f_i^{-1}(s)\right)}_{ \stackrel{\law}{\longrightarrow}\X^{(i)}_s} \stackrel{\law}{\longrightarrow}f_i'\left(f_i^{-1}(s)\right)\X^{(i)}_s.
\end{align*}
By \cite[Theorem 1]{verwaat72} this is equivalent to 
\[ \left\{ \sqrt{n_i} \Big( \frac{1}{n} \tilde T^i_{n_i f_i^{-1}(t)} - t \Big);  t\geq 0\right\} \stackrel{\law}{\longrightarrow} -\left\{f_i'\left(f_i^{-1}(t)\right) \cdot \X^{(i)}_t; t \geq 0 \right\}, \]
which, in turn is equivalent to 
\[ \left\{ \sqrt{n_i} \Big( \frac{1}{n} \tilde T^i_{n_i q} - f_i(q) \Big);  q\in[0,1)\right\} \stackrel{\law}{\longrightarrow} -\left\{ f_i'(q)\cdot \X^{(i)}_{f_i(q)};\  q\in[0,1) \right\}.  \]
Observe now that, for $p,q\in [0,1)$ with $p\le q$
\begin{align*}
\Cov\left[-f'_i(p)\cdot \X^{(i)}_{f_i(p)}, -f'_i(q)\cdot \X^{(i)}_{f_i(q)}\right] &= f_i'(p) \cdot f_i'(q) \cdot f_i^{-1}(f_i(p)) \cdot \left[1-f_i^{-1}(f_i(q))\right]\\
& =\frac{p}{\beta_i^2 (1-p)}
\end{align*}						
and hence 
\begin{align*} 
-\left\{ f_i'(q)\cdot \X^{(i)}_{f_i(q)};\ q\in[0,1) \right\} &=-\left\{\frac{1}{\beta_i(1-q)}\cdot \X^{(i)}_{-\log(1-q)/\beta_i};\ q\in[0,1)\right\}\\
&= \left\{ \mathsf{B}^i\big(\sigma^i(q) \big) ; \  q\in[0,1)\right\},
\end{align*}
in law in the Skorokhod space, with $\sigma^i(q)=\frac{q}{\beta_i^2(1-q)}$.
Therefore
\begin{align*}
 &\left\{ \sqrt{n_i} \left( \frac{1}{n} \tilde T^i_{n_i q} - \frac{1}{\beta_i}\log\frac{1}{1-q}\right);  q\in [0,1)\right\} \stackrel{\law}{\longrightarrow} \left\{ \mathsf{B}^i\big(\sigma^i(q) \big) ;\  q\in[0,1)\right\}
\end{align*}
and this proves the claim.
\end{proof}

Since for each $i\in [J]$, the random variables $(\tilde T_k^i)$ are measurable functions of $\PoisN^i_k$, and $\PoisN^1_t,\ldots,\PoisN^J_t$ are independent, the random variables $\tilde T^1_{n_1},\ldots,\tilde T^J_{n_J}$ are independent as well, so we also have their joint convergence.

\begin{proposition}\label{p:CLT_T-joint}
Suppose that \textsf{Assumption 1} hold.
For the continuous time acquisition process $(\tilde T_k)_{k\in \N}$ with $\tilde T_k=(\tilde T^1_k,\ldots,\tilde T^J_k)$, we have the following (joint) weak convergence in the Skorokhod space 
{$\mathcal{D}([0,1),\mathbb{R}^J)$} endowed with the standard  $\mathbf{J}_1$ topology:
\begin{align*}
\left\{\left(\frac{1}{\sqrt{n}}\vec{T}_{nq}-\sqrt{n}\log\frac{1}{1-q}\mathsf{E}[\tilde T_{nq}]\right);\ q\in [0,1)\right\}& \stackrel{\law}{\longrightarrow}
\left\{\mathsf{B}(\sigma(q));\ q\in [0,1)\right\}
\end{align*}
where the $J$-dimensional processes above are defined as following:
\begin{align*}
\vec{T}_{n q}:=\left(\sqrt{\gamma_1}\tilde T^1_{n_i q},\ldots,\sqrt{\gamma_J}\tilde T^J_{n_J q}\right),
\end{align*}
the expectation vector $\mathsf{E}$ is given by 
$$\mathsf{E}[\tilde T_{nq}]:=\left(\frac{\sqrt{\gamma_1}}{\beta_1},\ldots,\frac{\sqrt{\gamma_J}}{\beta_J}\right),$$
and $\mathsf{B}(\sigma(q)):=(\mathsf{B}^1(\sigma^1(q)),\ldots,\mathsf{B}^J(\sigma^J(q)))$ is a $J$-dimensional standard Brownian motion, 
{with independent components} and 
 $\sigma^i(q)=\frac{q}{\beta_i^2(1-q)}$.
\end{proposition}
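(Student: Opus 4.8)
The plan is to derive the joint statement from the one-dimensional convergence in Proposition~\ref{prop:clt-T_i} together with the independence of the $J$ coordinate processes, mirroring the way Proposition~\ref{p:CLT_Nt-joint} is obtained from Proposition~\ref{prop:clt-n_i}.

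First I would identify the $i$-th coordinate of the centred, rescaled vector with the process already handled in Proposition~\ref{prop:clt-T_i}. Using $n_i=\gamma_i n$ and $\beta_i=np_i$, the $i$-th coordinate of $\frac{1}{\sqrt n}\vec T_{nq}-\sqrt n\log\frac{1}{1-q}\,\mathsf E(\tilde T_{nq})$ equals
\begin{align*}
\frac{\sqrt{\gamma_i}}{\sqrt n}\,\tilde T^i_{n_iq}-\sqrt n\,\log\tfrac{1}{1-q}\,\frac{\sqrt{\gamma_i}}{\beta_i}
&=\sqrt{\gamma_i n}\,\Big(\tfrac1n\,\tilde T^i_{n_iq}-\tfrac{1}{\beta_i}\log\tfrac{1}{1-q}\Big)\\
&=\sqrt{n_i}\,\Big(\tfrac1n\,\tilde T^i_{n_iq}-\tfrac{1}{\beta_i}\log\tfrac{1}{1-q}\Big),
\end{align*}
which by Proposition~\ref{prop:clt-T_i} converges in law in $\mathcal D([0,1),\mathbb R)$, for the $\mathbf{J}_1$ topology, to $\mathsf B^i(\sigma^i(q))$ with $\sigma^i(q)=q/(\beta_i^2(1-q))$. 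Thus each of the $J$ coordinate processes has the correct marginal limit.

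Second, I would record the independence. For each $i$ the acquisition-time process $(\tilde T^i_k)_k$ is a measurable functional of $(\PoisN^i_t)_t$, as noted just before the statement, and the processes $\PoisN^1,\dots,\PoisN^J$ are independent, being measurable with respect to disjoint sub-families of the independent Poisson processes $(\mathcal P_t(k))_{t\ge0}$, $k=1,\dots,n$. Hence the $J$ coordinate processes are independent for every fixed $n$, and the limits $\mathsf B^1(\sigma^1(\cdot)),\dots,\mathsf B^J(\sigma^J(\cdot))$ are independent too. Combined with the marginal convergence of the first step, independence gives convergence of all finite-dimensional distributions of the rescaled $J$-vector to those of $\mathsf B(\sigma(\cdot))$, since a finite-dimensional law of an independent family factorises into its one-dimensional marginals.

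Finally, I would upgrade finite-dimensional convergence to weak convergence in $\mathcal D([0,1),\mathbb R^J)$. Tightness of a sequence of $\mathbb R^J$-valued processes in the $\mathbf{J}_1$ topology is equivalent to tightness of each of the $J$ scalar coordinate sequences (see e.g.\ \cite[Corollary~VI.3.33]{jacod-shiryaev}); each coordinate sequence is tight because it converges by the first step, so the rescaled $J$-vector is tight in $\mathcal D([0,1),\mathbb R^J)$. Since the limit $\mathsf B(\sigma(\cdot))$ has continuous sample paths, convergence of finite-dimensional distributions together with tightness yields the claimed weak convergence. The only point that truly deserves care is this last step: componentwise $\mathbf{J}_1$-convergence does not by itself force joint $\mathbf{J}_1$-convergence on the $J$-fold product path space, and it is precisely the continuity of the Brownian limit --- which prevents jumps from being misaligned across coordinates --- that makes the tightness-plus-finite-dimensional argument valid. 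Everything else is bookkeeping with the constants $n_i=\gamma_i n$ and $\beta_i=np_i$.
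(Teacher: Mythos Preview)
Your argument is correct and follows the same route the paper takes: the paper's justification is the single sentence preceding the proposition, observing that the $(\tilde T^i_k)_k$ are measurable functions of the independent processes $\PoisN^1,\ldots,\PoisN^J$, so marginal convergence from Proposition~\ref{prop:clt-T_i} yields joint convergence. You supply the details the paper omits --- the factorisation of finite-dimensional laws and the tightness step via C-tightness of the coordinates (continuity of the Brownian limit) --- which is the right way to make the one-line claim rigorous.
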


\section{Virus spread model as a random subtree of a MGW}\label{sec:inf-mgw}

We will now show how to obtain a virus spread model by coupling a coupon collector with a multitype Galton-Watson process (MGW) with $J$ types. We will cut from the multitype Galton-Watson tree $\T^{\mathsf{MGW}}$ nodes that 
correspond to vertices in $\mathbb K_n$ that
are reached by more than one random walker. 
We will construct, on a joint probability space $(\Omega,\mathcal{F},\mathbb{P})$
\begin{itemize} \setlength\itemsep{0em}
\item a multitype Galton-Watson process with offspring distribution matrix $L$, where the row sums are $(\L^1,\ldots,\L^J)$, and $J\in \N$ is the number of types,
\item for each integer $n$, a coupon collector's process with $J$ different types of coupons,
\item for each integer $n$, sequences $(\sS_t)_{t\in \N}$ and
 $(\sN_t)_{t\in\N}=((\sN^1_t,\ldots,\sN^J_t))_{t\in \N}$, with $\sN_0=\mathsf{e}_{i_0}$ for a fixed $i_0\in [J]$, $\sS_0\sim \mathcal{L}^{i_0}$, and such that $(\sN_t,\sS_t)_{t\in \N}$ is a Markov chain with transition probabilities as in \eqref{eq:tr-pb-MC}. 
\end{itemize} 

Notice that the situation here is slightly different from the situation in \cite{comets14}. In \cite{comets14} the Galton-Watson process and the coupon collector were assumed to be independent and then the sequence $(\sN_t,\sS_t)_{t\in \N}$ was constructed out of them
(by pruning the Galton-Watson tree). This is not possible anymore here: the types of the Galton-Watson process and the coupon collector have to match and thus we cannot assume that these processes are independent of each other.

In order to construct the processes mentioned above, we start with a multitype Galton-Watson tree $(\T^{\mathsf{MGW}},\type^{\star})$ where $\type^{\star}$ is the type function in the tree rooted at $\varnothing$ with $\type^{\star}(\varnothing)=i_0$, with offspring distribution matrix $L$, which has independent rows $L^{(i)}=(L^{(i,1)},\ldots,L^{(i,J)})$ and distribution given as in \eqref{eq:off-matrix}. Also $\L^i=\sum_{j=1}^J L^{(i,j)}$ represents the total offspring number of a vertex of type $i$, and the random variables $\L^1,\ldots,\L^J$ are independent but not identically distributed. For the following construction it will be crucial that the types of the children $v_1, \dots, v_l$ of each vertex $v$ of $\T^{\mathsf{MGW}}$ have an exchangeable distribution, i.e.\ the distribution of $(\type^{\star}(v_{\pi(1)}), \dots, \type^{\star}(v_{\pi(l)}))$ does not depend on the permutation $\pi:\{1, \dots, l\}\to \{1,\dots, l\}$. 

Recursively, for $t=0,1,\ldots$ we construct $X(t)\in \T^{\mathsf{MGW}}$ representing the $t$-th tentative infection,
and disjoint subtrees $\cT^{\circ}(t),\cT^{\boxplus}(t),\cT^{\dagger}(t)\subset \T^{\mathsf{MGW}}$.
Note that if $X(t)$ has type $i$, then the  corresponding infection attempt in $\mathbb{K}_n$ is at a vertex $u$ with type $i$.
Moreover, if this infection attempt is successful (that is, $u$ was not infected before), then the further infection attempts made by $u$ are represented by the children of $X(t)$. We define
$$\cT^{\circ}(t)=\bigcup_{i=1}^J\underbrace{\{v\in\cT^{\circ}(t):\ \type^{\star}(v)=i\}}_{:=\cT^{\circ}_i(t)}$$
representing the set of vertices (written as a partition in vertices of types $i\in [J]$, contained in the set $\cT^{\circ}_i(t)$) already infected by time $t$. The set $\cT^{\boxplus}(t)$ is the set of tentative infections scheduled but not performed by time $t$, that we write again as a disjoint union of vertices of type $i$ 
$$\cT^{\boxplus}(t)=\bigcup_{i=1}^J\underbrace{\{v\in\cT^{\boxplus}(t):\ \type^{\star}(v)=i\}}_{:=\cT^{\boxplus}_i(t)}.$$
Finally $\cT^{\dagger}(t)$ represents the set of failed spreads (the set of failures in the coupon collecting process);
we do not partition this part into subsets with different types. At time $t=0$, we start with the following data: for some fixed $i_0\in [J]$ and root $\varnothing$ of type $i_0$

\begin{equation*}
\begin{cases}
X(0)  &=\varnothing \text{ (the root of the MGW tree) with } \type^{\star}(\varnothing)=i_0\\
\cT^{\circ}(0)   & =\{\varnothing\} \text{ where } \cT^{\circ}_{i_0}(0)=\{\varnothing\} \text{ and } \cT^{\circ}_i(0)=\emptyset,\ \forall i\ne i_0\\
\cT^{\boxplus}(0)&=\bigcup_{i=1}^J\underbrace{\{v\in\mathbb{T}^{\mathsf{MGW}}:\ \type^{\star}(v)=i; |v|=1\}}_{:=\cT^{\boxplus}_i(0)}\\
\cT^{\dagger}(0)  &=\emptyset
\end{cases}
\end{equation*}
Given the process  $\left(X(t),\cT^{\circ}(t),\cT^{\boxplus}(t),\cT^{\dagger}(t)\right)$ at time $t$, its value at time $t+1$ can be defined as 
follows.
If $\cT^{\boxplus}(t)$ is non-empty, let 
$$v=X(t+1)=\inf\{w\in \cT^{\boxplus}(t)\},$$
where the vertices of $\T^{\mathsf{MGW}}$ are sorted according to their generation, within the generation lexicographically. 
  The type $j_0=\type^{\star}(v)$ determines the type of the vertex $u\in\mathbb{K}_n$ which is hit by the infection attempt at time $t+1$. Each of the $n_{j_0}$ candidates, i.e.\ each vertex $w\in\mathbb{K}_n$ with $\type(w)=j_0$, has the same probability $\frac{1}{n_{j_0}}$ of being hit. In particular, the decision, whether this infection attempt is successful will be made at random independently of $(\T^{\mathsf{MGW}},\type^{\star})$.  
Let $\mathbf{n}^{j_0}=\left|\cT_{j_0}^{\circ}(t)\right|$.
\begin{itemize}
\setlength\itemsep{0em}
\item With probability $\frac{\mathbf{n}^{j_0}}{n_{j_0}}$,
 the infection attempt fails (resulting from sending the virus to a vertex that was previously infected). Then we set
\begin{equation}
 \begin{cases}
\cT^{\circ}(t+1)&=\cT^{\circ}(t), \text{ i.e. } \cT^{\circ}_{i}(t+1)=\cT^{\circ}_i(t) \text{ for all } i\in[J]\\
\cT^{\boxplus}(t+1)&=\cT^{\boxplus}(t)\setminus\{v\}\\
\cT^{\dagger}(t+1)&=\cT^{\dagger}(t)\cup\{v\}
 \end{cases}
\end{equation}
which means that at time $t+1$ the vertex $v$, that was on standby, is now put in the tree $\cT^{\dagger}(t+1)$ with failures, and we have lost one piece of spread capacity without infecting a new vertex. In this case we define $\sN_{t+1}=\sN_t, \sS_{t+1}=\sS_t - 1$. 
\item With probability $\frac{n_{j_0}-\mathbf{n}^{j_0}}{n_{j_0}}$
 the infection is successful. Then
\begin{equation}
\begin{cases}
\cT^{\circ}_{j_0}(t+1)&=\cT^{\circ}_{j_0}(t)\cup\{v\} \text{ and }\cT^{\circ}_j(t+1)=\cT^{\circ}_j(t),\ \text{for all }j\neq j_0\\
\cT^{\boxplus}(t+1)&=\cT^{\boxplus}(t)\setminus\{v\}\cup\{v_1,\ldots,v_{\L^{j_0}(v)}\}\\
\cT^{\dagger}(t+1)&=\cT^{\dagger}(t)
\end{cases}
\end{equation}
where $v_1,\ldots,v_{\L^{j_0}(v)}$ are the children of $v$ in $\T^{\mathsf{MGW}}$. In this case, in the tree $\cT^{\circ}(t+1)$ of infected vertices, the vertex $v$ of type $j_0$ will be added and  it will be deleted from the set $\cT^{\boxplus}$ of stand-by vertices. In addition, to the later tree $\L^{j_0}(v)=L^{(j_0,1)}(v)+\ldots+L^{(j_0,J)}(v)$ offspring of $v$ with the corresponding types  will be added. We put $\sN_{t+1}=\sN_t+\mathsf{e}_{j_0}, \sS_{t+1} = \sS_t + \L^{j_0}-1$. 
\end{itemize}
We proceed with the algorithm as long as $\cT^{\boxplus}(t)$ is non-empty. If $\cT^{\boxplus}(t)$ is empty then we have no vertices on standby, and the process stops; so we set $\tau_n=t$. The set 
$$\cT^{\circ}(t)=\cT^{\circ}(\tau_n)=\bigcup_{i=1}^J\cT^{\circ}_i(\tau_n)$$
is the set of vertices infected during the entire process.  

 The sequence $(\sN_t^1, \dots \sN_t^J, \sS_t)_{t\in\mathds{N}}$ constructed above, 
up to time $\tau_n$, is a Markov chain with transition probabilities as in \eqref{eq:tr-pb-MC}. This follows from the fact that for any $n\in\mathds{N}$, the type of the vertex $X(t+1)$ in the above construction is distributed as $\mathbb{P}[\type^{\star}(X(t+1))=j] = \alpha_j$ for $j\in[J]$. 
{Recall that}
$T^i_k:=\inf\{t\in \N: \ \sN^i_t=k\}$.
We also have that $\tau_n$ is a.s.\ finite and bounded by 
$$\tau_n\leq \max\{T^1_{n_1}, \dots, T^J_{n_J}\} \quad \text{and} \quad |\cT^{\circ}(\tau_n)|=\sum_{i=1}^J|\cT^{\circ}_i(\tau_n)|.$$

Again, for technical reasons and in order to obtain a complete coupon collector's model, we extend the sequences $\sN_t$ and $\sS_t$
beyond the end of the epidemic $\tau_n$.  
Hence for 
$t>\tau_n$
we choose a type 
$\Lambda$ according to $\mathbb{P}[\Lambda=j]=\alpha_j$ for $j\in[J]$.
 Then we let the collection of the next coupon be successful and put $\sN_{t+1}=\sN_t + e_\Lambda$ with probability $\frac{n_\Lambda-\mathsf{N}_t^\Lambda}{n_\Lambda}$ (independent of anything else) and we let it fail with probability $\frac{\mathsf{N}_t^\Lambda}{n_\Lambda}$, in which case we put $\sN_{t+1}=\sN_t$.
 
Consider now $J$ i.i.d.\ (infinite) sequences of vectors, also independent of each other:
 $(\bar{\mathcal{L}}^{i}_l)_{l\geq 1}$ with law $\mathcal{L}^{i}$, $i=1,2,\ldots,J$ and define for $l=1,\ldots,n_i$ 
\begin{equation}\label{eq:rv-li}
\mathcal{L}^{i}_l=
\begin{cases}
\mathcal{L}^{i}(X(T^i_l)) & \text{ if } l\leq |\cT^{\circ}_i(\tau_n)|\\
\bar{\mathcal{L}}^{i}_l   & \text{ if } l> |\cT^{\circ}_i(\tau_n)|.
\end{cases}
\end{equation}
This means that $\mathcal{L}^{i}_l$ is the number of children of the vertex which represents the $l$-th infected vertex of type $i$, if in the infective process we reached at least $l$ vertices of type $i$; otherwise $\mathcal{L}^{i}_l$ is the $l-th$ element of the sequence $\bar{\mathcal{L}}^{i}_l$.
We also use the following notation: for $i\in[J]$, let $\mathcal{R}^i_t$ be the spread capacity revealed by vertices of type $i$ up to time $t$, that is:
\begin{equation}\label{eq:inf-type1}
\mathcal{R}^i_t=\sum_{l=1}^{\sN^{i}_t}\L^i_l,
\end{equation}
and as before, we also write  $\tilde{\mathcal{R}}_t^i=\mathcal{R}_{\mathcal{P}_t}^i$.
The total capacity available at time $t\in\mathds{N}$  is
\begin{equation}\label{eq:s_t}
\sS_t=\underbrace{\sum_{l=1}^{\sN^1_t}\L^1_l}_{=\mathcal{R}^1_t}+\ldots+\underbrace{\sum_{l=1}^{\sN^J_t}\L^J_l}_{=\mathcal{R}^J_t}-t=\underbrace{\sum_{i=1}^J \mathcal{R}^i_t}_{:=\mathcal{R}_t}-t.
\end{equation}

Note that, by construction, for $t\le\tau_n$, we have:
$$\cN_t=\sum_{i=1}^J\sN_t^i=|\cT^{\circ}(t)|=\sum_{i=1}^J\underbrace{|\cT^{\circ}_i(t)|}_{=\sN^i_t}\quad \text{and}\quad 
\sS_t=|\cT^{\boxplus}(t)|.$$
Remark that only $\cT^{\circ}(t)$ is a connected random tree, as subtree of $\mathds{T}^{\mathsf{MGW}}$, 
but $\cT^{\circ}_i(t)$, $i\in [J]$ are not connected, they are only subsets of vertices of  $\cT^{\circ}(t)$.

\begin{proposition}\label{prop:var-li}
For each $i\in[J]$ and $l\in[n_i]$ the random variable $\L^i_l$ has law $\mathcal{L}^{i}$. 
The random variables $(\mathcal{L}^i_l)_{1\leq l\leq n_i, 1 \leq i \leq J}$  are independent (of each other) and also independent 
of the acquisition times $(T^i_l)_{1\leq l\leq n_i, 1\leq i \leq J}$. Also
$$\tau_n= \inf\{t\in\mathds{N}: \mathsf{S}_t=\mathbf{0}\}.$$
\end{proposition}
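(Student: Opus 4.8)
The plan is to re-run the construction of Section~\ref{sec:inf-mgw} with types and offspring numbers revealed as lazily as possible, which makes the required independence transparent. The essential observation — flagged before the construction — is the exchangeability of the types of the children of a vertex: by \eqref{eq:off-matrix} the vector of type-counts of the children of a type-$i$ vertex $v$ is multinomial with parameters $\mathcal{L}^i(v)$ and $(\alpha_1,\dots,\alpha_J)$, so an exchangeable arrangement of these children is, read off in index order, just a sequence of i.i.d.\ $[J]$-valued variables with law $(\alpha_1,\dots,\alpha_J)$. Hence one may equivalently run the construction by revealing, when a vertex $v$ is successfully infected, \emph{only} its total offspring count $\mathcal{L}^{\type^{\star}(v)}(v)$ (of law $\mathcal{L}^{\type^{\star}(v)}$), attaching that many still-untyped children to $\cT^{\boxplus}$, and drawing the type of a child — as a fresh $(\alpha_1,\dots,\alpha_J)$-distributed variable — only at the moment that child is selected as some $X(s)$. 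In this form the construction is a deterministic functional of four mutually independent i.i.d.\ families: the revealed types $(\theta_t)_{t\ge1}$; the uniforms $(U_t)_{t\ge1}$ declaring the attempt at step $t$ successful (iff $U_t\le(n_{\theta_t}-\sN^{\theta_t}_{t-1})/n_{\theta_t}$); the offspring totals $(\zeta^i_l)_{i\in[J],\,l\ge1}$ with $\zeta^i_l\sim\mathcal{L}^i$ (the total revealed at the $l$-th successful infection of a type-$i$ vertex, if it occurs); and the padding sequences $(\bar{\mathcal{L}}^i_l)$ of \eqref{eq:rv-li}.

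First I would note that $(\sN_t)_{t\ge0}$ is a deterministic function of $(\theta_t,U_t)_{t\ge1}$ alone: both before and after $\tau_n$ the increment is $\sN_{t+1}-\sN_t=\mathsf{e}_{\theta_{t+1}}\ind{U_{t+1}\le(n_{\theta_{t+1}}-\sN^{\theta_{t+1}}_t)/n_{\theta_{t+1}}}$, so the offspring totals never enter the dynamics of $\sN$. In particular the acquisition times $T^i_l=\inf\{t:\sN^i_t=l\}$ are functions of $(\theta,U)$ only, and it remains to prove that the offspring array $(\mathcal{L}^i_l)_{i,l}$ is an array of independent variables with $\mathcal{L}^i_l\sim\mathcal{L}^i$, moreover independent of $(\theta,U)$.

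For this, I would list the offspring totals in chronological order as $R_1,R_2,\dots$, so that if the $k$-th success is of a type-$i$ vertex and is the $m_k$-th type-$i$ success then $R_k=\zeta^{i}_{m_k}$. Since the success/failure decisions and the types are functions of $(\theta,U)$ only, the sequence $\big((i_k,m_k)\big)_k$ prescribing which entry of the array $(\zeta^i_l)$ to read at the $k$-th success is a deterministic, injective function of $(\theta,U)$; hence, conditionally on $(\theta,U)$, $(R_k)_k$ is a sequence of independent variables with $R_k\sim\mathcal{L}^{i_k}$, read at distinct positions of $(\zeta^i_l)$ and independent of $(\theta,U)$ and of the padding. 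The process stops at the first $t$ with $\sS_t=0$, and $\sS_t=\sum_i\sum_{l\le\sN^i_t}\zeta^i_l-t=(R_1+\dots+R_{\mathcal{N}_t})-t$ with $\mathcal{N}_t$ a function of $(\theta,U)$; thus, given $(\theta,U)$, the number $\mathcal{N}_{\tau_n}$ of revealed totals is a stopping time for $(R_k)_k$. By the elementary optional-sampling identity for sequences of independent variables, replacing the tail of $(R_k)_k$ beyond $\mathcal{N}_{\tau_n}$ by a fresh continuation with the prescribed marginals leaves the law of the sequence unchanged; regrouping by type, the type-$i$ subsequence is $(\zeta^i_1,\dots,\zeta^i_{K_i},\,\text{fresh }\mathcal{L}^i\text{'s})$ with $K_i=\sN^i_{\tau_n}$, and the fresh continuation may be taken to be the padding $(\bar{\mathcal{L}}^i_l)_{l>K_i}$ of \eqref{eq:rv-li}. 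So the array $(\mathcal{L}^i_l)_{i,l}$ has the same law as $(\zeta^i_l)_{i,l}$ — independent, with $\mathcal{L}^i$-marginals — and this law does not depend on $(\theta,U)$, whence $(\mathcal{L}^i_l)_{i,l}$ is independent of $(\theta,U)$ and therefore of all acquisition times. Finally $\tau_n=\inf\{t:\sS_t=0\}$ is immediate, since $\sS_t=|\cT^{\boxplus}(t)|$ for $t\le\tau_n$ (displayed just before the statement) while $\tau_n$ is by construction the first time $\cT^{\boxplus}(t)$ is empty.

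The delicate point is the third step: because the place $K_i$ where the genuine offspring totals end and the padding begins is itself a function of the $\zeta$'s (through $\tau_n$, through $\sS$), the independence is not automatic and must be obtained via the optional-sampling / ``resample after a stopping time'' argument, with careful bookkeeping of which of the four input families each quantity depends on — crucially the fact that the index sequence $\big((i_k,m_k)\big)_k$ is driven by $(\theta,U)$ only. A secondary point requiring care is the verification, in the first paragraph, that the lazy re-description has exactly the law of the construction of Section~\ref{sec:inf-mgw}; this is where exchangeability of the children's types is genuinely used.
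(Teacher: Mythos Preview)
Your argument is correct, and it follows a genuinely different and cleaner route than the paper. The paper works directly with the original construction of Section~\ref{sec:inf-mgw}: it conditions on each possible ordering $h$ of the collected types (encoded in the events $H_h$), and then carries out explicit $\sigma$-algebra bookkeeping to show that the $m$-th revealed offspring total $K_m$ has law $\mathcal{L}^{h(m)}$ given $K_1,\dots,K_{m-1}$ and all acquisition times, the key computation being that $\mathbb{P}[\type^{\star}(X(t))=j\mid\text{past}]=\alpha_j$ via the multinomial structure. You instead absorb that computation into a reformulation: by exchangeability, one may reveal the child-types lazily as fresh i.i.d.\ $(\alpha_1,\dots,\alpha_J)$-draws, which decouples the coupon-collector dynamics $(\sN_t)$ (and hence the $T^i_l$) from the offspring array entirely; the only genuine coupling left is through the random truncation at $\tau_n$, which you dispatch with an optional-sampling/resampling argument applied to the independent (not identically distributed) sequence $(R_k)$, conditionally on $(\theta,U)$. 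Both approaches rest on the same exchangeability fact, but yours packages it once, up front, whereas the paper invokes it inside a longer measurability argument. The gain of your route is conceptual transparency and brevity; the cost is that the equivalence of the lazy description to the original construction must be checked (you flag this), and in a fully written version that verification would amount to essentially the same $\alpha_j$-computation the paper does explicitly.
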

\begin{proof} 
The formula for $\tau_n$ follows from \eqref{eq:s_t} and from the fact that $\tau_n$ is finite. 


For the other claim, consider the system $\mathcal{H}$ of functions $h:\{1, \dots, n\} \to \{1, \dots, J\}$ such that
\[ |\{k\in[n] \mid h(k)=j \} | = n_j, \quad j\in [J]. \]
Associate with each $h\in\mathcal{H}$ a further function
\[ \bar h: [J] \times [n] \to [n] \cup \{\infty\},\ (i,k)\mapsto \inf\{l\in[n] \mid |\{ l'\in [l] \mid h(l')=i\}| \ge k\} \]
so that $\bar h(i,k)$ represents the $k$-th time that $h$ takes the value $i$. 
Furthermore put
\[ H_h=\bigcap_{i_1,i_2,l_1,l_2\colon \bar h(i_1,l_1) <\bar h(i_2, l_2)}\left\{ T_{l_1}^{i_1} < T_{l_2}^{i_2}  \right\} \cap \bigcap_{i_1,i_2,l_1,l_2\colon \bar h(i_1,l_1) >\bar h(i_2, l_2)}\left\{ T_{l_1}^{i_1} > T_{l_2}^{i_2}  \right\}, \]
where $ i_1, i_2$ run in $[J]$ and $l_1, l_2$ run in $[n_{i_1}]$ and $[n_{i_2}]$, respectively.
Write 
$$\mathsf{T}^{[J]}_{\{n_1,\ldots,n_J\}}=\left((T_1^1,\ldots T_{n_1}^1),\ldots,(T_1^J,\ldots T_{n_J}^J)\right).$$
Our aim is to show that
\begin{align}
\mathbb{P} \big[& \L^i_l = \lambda^{(i)}_l \text{ for all } i\in[J], l\in [n_i], \mathsf{T}^{[J]}_{\{n_1,\ldots,n_J\}} \in B_1 \mid H_h \big] \notag\\
& = \prod_{i=1}^J \prod_{l=1}^{n_i} \mathbb{P}[\mathcal{L}^{(i)} = \lambda^{(i)}_l] \cdot \mathbb{P}\big[\mathsf{T}^{[J]}_{\{n_1,\ldots,n_J\}} \in B_1 \mid H_h \big]  \label{e:independent_Hh}\end{align}
for every $h\in\mathcal{H}$, $\lambda^{(i)}_l\in \mathds{N}$, $i\in [J], l\in [n_i]$ and $B_1\subseteq \mathds{N}^n$.
In order to see this put, for a fixed $h\in\mathcal{H}$,
$$K_l := \L^{ h(l)}_{\tilde h(l)}\quad \text{where} \quad   \tilde h(l) := | \{l'\in [l] \mid h(l)=h(l')\} | $$
counts how often the type of $h(l)$ has appeared in $h$ up to time $l$.
With this notation we are then going to show
\begin{equation}\label{e:K=L}  
\mathbb{P}\left[K_m=\lambda \mid K_1, \dots, K_{m-1},\mathsf{T}^{[J]}_{\{n_1,\ldots,n_J\}}\right] = \mathbb{P}\left[\mathcal{L}^{h(m)}=\lambda\right].
\end{equation}
For this set $T_l := T^{h(l)}_{\tilde  h(l)}$, and for $0=k_1<k_2<\dots<k_m, m\in\mathds{N}, w\in \mathcal{U}_{\infty}$, put also
\[ A = \{(T_1, \dots, T_m) = (k_1, \dots, k_m) \} \cap \{X(k_{m-1})=w \} \cap \{ T_m \le \tau_n \}. \]
Then, on $A$, $X(T_m)=X(k_m)$ is $\mathcal{F}$-measurable, where $\mathcal{F}:=\sigma(K_1, \dots, K_{m-1})$. In order to see this we notice that an easy induction argument yields that
\[(X(t), \cT_1^\circ(t), \dots, \cT_J^\circ(t), \cT^\boxplus(t), \cT^\dagger(t))\]
is $\mathcal{F}$-measurable for $t=1, \dots, k_{m-1}$ on $A$ in the sense that the event
\begin{align*}
A &\setlength\itemsep{0em}\cap \{ X(t)=w_1\} \cap \bigcap_{i=1}^J \{ w_{\nu_{i-1}+1}, \dots, w_{\nu_i} \in \cT^\circ_i(t) \} \\
&\cap \{ w_{\nu_{J}+1}, \dots w_{\nu_{J+1}} \in \cT^\boxplus(t) \} 
 \cap \{ w_{\nu_{J+1}+1}, \dots, w_{\nu_{J+2}}\in \cT^\dagger(t) \}
\end{align*}
is measurable w.r.t.\ the trace-$\sigma$-algebra of $\mathcal{F}$ on $A$ for any numbers $1=\nu_0\le \nu_1 \le \dots \le \nu_{J+2}$ and $w_1, \dots, w_{\nu_{J+2}}\in \mathcal{U}_{\infty}$. 
In particular,
\[ X(k_m) = \inf\{w\in \cT^\circ(k_m-1)\} = \inf\{w\in \bigcup_{i=1}^J \cT_i^\circ(k_m-1)\} \]
is $\mathcal{F}$-measurable on $A$. Notice that $X(k_m) > X(k_m-1)=w$.
Next we will show that the acquisition times $T^i_l, i\in[J], l\in[n_i]$ are independent of $K_1, \dots, K_n$.
For this we first observe that on $\{t \le \tau_n \}$ the sequence $\left(\type^{\star}(X(1)), \type^{\star}(X(2)), \dots, \type^{\star}(X(t))\right)$ is independent of $(K_1, \dots, K_n)$. Indeed,
\begin{align*}
&\mathbb{P}\big[ \type^{\star}(X(t)) = j \mid  t \le \tau_n, \type^{\star}(X(1)), \dots, \type^{\star}(X(t-1)), K_1, \dots, K_n \big]\\
&= \sum_{\nu=1}^\infty \sum_{\mu=1}^\nu \sum_{t'=0}^t \mathbb{P}\left[ X(t)=X(t')\mu, \L(X(t'))=\nu, \type^{\star}(X(t')\mu)=j \mid E \right]\\
&=  \sum_{\nu=1}^\infty \sum_{\mu=1}^\nu \sum_{t'=0}^{t-1} \begin{aligned}[t] \mathbb{P}\left[ X(t) = X(t')\mu \mid E\right] \cdot & \mathbb{P}[ \mathcal{L}(X(t'))=\nu \mid X(t) = X(t')\mu, E \big]\\ \cdot & \mathbb{P}[ \type^{\star}(X(t')\mu)=j \mid X(t) = X(t')\mu, \mathcal{L}(X(t'))=\nu,  E \big]
\end{aligned}
\end{align*}
where by $E$ we have denoted
$$E:=\left\{t\le \tau_n,  \type^{\star}(X(1)), \dots, \type^{\star}(X(t-1)), K_1, \dots, K_n\right\}$$
and $X(t')\mu$ denotes the $\mu$-th child of $X(t')$. Putting
\[ E':= \left\{t\le \tau_n,  \type^{\star}(X(1)), \dots, \type^{\star}(X(t-\mu)), \type^{\star}\left(X(t')1\right), \dots, \type^{\star}(X(t')(\mu-1)), K_1, \dots, K_n\right\} \]
we get
\begin{align*}
 \mathbb{P}&[ \type^{\star}(X(t')\mu)=j \mid X(t)=X(t')\mu, \mathcal{L}(X(t'))=\nu,  E \big]\\
 &=\mathbb{P}[ \type^{\star}(X(t')\mu)=j \mid X(t)=X(t')\mu, \mathcal{L}(X(t'))=\nu,  E' \big]\\
 &=\mathbb{P}\left[ \type^{\star}(X(t')\mu)=j \mid  \mathcal{L}(X(t'))=\nu,  t \le \tau_n, \type^{\star}(X(t')1), \dots, \type^{\star}(X(t')(\mu-1))\right]= \alpha_j
\end{align*}
due to the properties of the multinomial distribution. So we get
\[ \mathbb{P}\left[ \type^{\star}(X(t)) = j \mid  t \le \tau_n, \type^{\star}(X(1)), \dots, \type^{\star}(X(t-1)), K_1, \dots, K_n \right] = \alpha_j. \]

Furthermore, let $U(t) \sim U(0,1)$, $t\in\mathds{N}$, be independent random variables (independent of $(K_1, \dots, K_n)$) such that the transmission at time $t$ fails iff $U(t) \ge (n_j- \mathsf{n}^j)/ n_j$. Then an easy induction w.r.t.\ $t$ shows that on $\{t\le \tau_n\}$ the event $\{T^i_l=t^i_l \text{ for all } i\in[J], l\in[n_i]\}$ is $\sigma\left(\type^{\star}(X(\tilde t)1), \dots, \type^{\star}(X(\tilde t)\mathcal{L}(X(\tilde t))), U(\tilde t) | \tilde t<t\right)$ measurable as long as $\max\{ t^i_l \mid i\in[J], l\in[n_i] \} \le t$. Since this holds for arbitrary $t\in\mathds{N}$ and since the random variables $\type^{\star}(X(\tilde t)1), \dots, \type^{\star}(X(\tilde t)\mathcal{L}(X(\tilde t))), U(\tilde t), \tilde t\in \mathds{N}$ are independent of $K_1, \dots, K_n$, we get that $T^i_l, i\in[J], l\in[n_i]$ are independent of $K_1, \dots, K_n$. 

For every $v\in \mathcal{U}_{\infty}$ with $v>w$, there is $B_2=B_2(v)\in\sigma(K_1, \dots, K_{m-1})$ with $\{X(k_m)=v\} \cap A = B_2 \cap A$. Notice that $\type^{\star}(X(T_m))=h(m)$. So for every $B_3\in\sigma(K_1, \dots, K_{m-1})$ and $B_4\in\sigma\left(\mathsf{T}^{[J]}_{\{n_1,\ldots,n_J\}}\right)$ with $B_3 \cap B_4\subseteq A$ and for every $\lambda\in\mathds{N}$ we get
\begin{align*}
\mathbb{P}\left[\{K_m=\lambda\} \cap B_3 \cap B_4\right] & = \mathbb{P}\left[ \{ \mathcal{L}(X(k_m))= \lambda \} \cap B_3 \cap B_4 \right] \\
&= \sum_{v\in \mathcal{U}_\infty, v>w} \mathbb{P}[ \{\L(v)=\lambda\} \cap \{\type^{\star}(v)=h(m)\} \cap \{ v=X(k_m) \} \cap B_3 \cap B_4 ] \\
&= \sum_{v\in \mathcal{U}_\infty, v>w} \mathbb{P}[ \L(v)=\lambda, \type^{\star}(v)=h(m)] \cdot \mathbb{P}[ B_2 \cap B_3 ] \cdot \mathbb{P}[ B_4 ] \\
&= \mathbb{P}[\mathcal{L}^{h(m)}=\lambda] \sum_{v\in \mathcal{U}_{\infty}, v>w} \mathbb{P}[B_2 \cap B_3 \cap B_4]\\
&= \mathbb{P}[\mathcal{L}^{h(m)}=\lambda] \sum_{v\in \mathcal{U}_{\infty}, v>w} \mathbb{P}[\{ v=X(k_m)\} \cap B_3 \cap B_4]\\
&= \mathbb{P}[\mathcal{L}^{h(m)}=\lambda] \cdot \mathbb{P}[B_3 \cap B_4].
\end{align*}
Summing up over all possible values $k_1, \dots, k_m$ and $w\in \mathcal{U}_\infty$ we can relax the assumption $B_3 \cap B_4 \subseteq A$ to $B_3 \cap B_4 \subseteq \{ T_m \le \tau_n \}$.
Since the sets $B_3\cap B_4$ with $B_3\in\sigma(K_1, \dots K_{m-1})$ and $B_4\in\sigma(\mathsf{T}^{[J]}_{\{n_1,\ldots,n_J\}})$ form an intersection-stable generating system of \\ $\sigma(K_1, \dots, K_{m-1}, \mathsf{T}^{[J]}_{\{n_1,\ldots,n_J\}})$, we get
\begin{equation} \mathbb{P}[ \{K_m=\lambda \} \cap B_5] = \mathbb{P}[ \L^{h(m)} = \lambda ] \cdot \mathbb{P}[B_5]  \label{e:independence_B5} \end{equation}
for all $B_5\in \sigma(K_1, \dots, K_{m-1}, \mathsf{T}^{[J]}_{\{n_1,\ldots,n_J\}})$ with $B_5\subseteq \{ T_m \le \tau_n\}$. Since \eqref{e:independence_B5} is trivial for $B_5\in \sigma(K_1, \dots, K_{m-1}, \mathsf{T}^{[J]}_{\{n_1,\ldots,n_J\}})$ with $B_5\subseteq \{ T_m > \tau_n\}$, we can drop the assumption $B_5 \subseteq \{T_m \le \tau_n\}$ in order to  arrive at \eqref{e:K=L}.
Applying \eqref{e:K=L} inductively, we get
\[ \mathbb{P}[ K_1=\lambda_1, \dots, K_n=\lambda_n, \mathsf{T}^{[J]}_{\{n_1,\ldots,n_J\}} \in B_1]  =  \prod_{m=1}^n \mathbb{P}[\L^{h(m)}=\lambda_m] \cdot \mathbb{P}[ \mathsf{T}^{[J]}_{\{n_1,\ldots,n_J\}} \in B_1] \] 
and this shows \eqref{e:independent_Hh}. Summing up over all $h\in\mathcal{H}$, we get the claimed independence properties. 
\end{proof}

Finally, we can also state in our case of different types of vertices, a result similar to \cite[Proposition 2.1]{comets14}.

\begin{lemma}
If we denote by $\sZ^{\mathsf{MGW}}_{\mathsf{tot}}=\left| V(\T^{\mathsf{MGW}})\right|$
the total number of individuals in the Galton-Watson process, then the full transmission event is asymptotically included in the survival event in the sense that 
$$\mathsf{Trans}_n\subset\{\sZ^{\mathsf{MGW}}_{\mathsf{tot}}\geq n\},\quad \text{where} \quad \{\sZ^{\mathsf{MGW}}_{\mathsf{tot}}\geq n\}\downarrow \mathsf{Surv}^{\mathsf{MGW}},\quad n\to\infty.$$
Conversely, the event of termination at time $o(n)$ in the epidemic model converges to the event of extinction in the multitype Galton-Watson process.
\end{lemma}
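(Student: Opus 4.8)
The plan is to read both assertions off the coupling of Section~\ref{sec:inf-mgw}, whose key feature is that one fixed multitype Galton--Watson tree $\T^{\mathsf{MGW}}$ carries the virus spread model for \emph{every} $n$.

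For the first inclusion I would argue as follows. On $\mathsf{Trans}_n$ every vertex of $\mathbb{K}_n$ is infected, so $\cN_{\tau_n}=|\cT^{\circ}(\tau_n)|=n$; since $\cT^{\circ}(\tau_n)$ is, by construction, a subtree of $\T^{\mathsf{MGW}}$, this forces $\sZ^{\mathsf{MGW}}_{\mathsf{tot}}=|V(\T^{\mathsf{MGW}})|\ge n$. The events $\{\sZ^{\mathsf{MGW}}_{\mathsf{tot}}\ge n\}$ are obviously decreasing in $n$ with $\bigcap_n\{\sZ^{\mathsf{MGW}}_{\mathsf{tot}}\ge n\}=\{\sZ^{\mathsf{MGW}}_{\mathsf{tot}}=\infty\}$, so it remains only to identify this event with $\mathsf{Surv}^{\mathsf{MGW}}$ up to a $\Prob$-null set. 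This is a standard dichotomy: on $\mathsf{Surv}^{\mathsf{MGW}}$ one has $\mathcal{Z}_t\ge1$ for all $t$, hence $\sZ^{\mathsf{MGW}}_{\mathsf{tot}}=\sum_t\mathcal{Z}_t=\infty$; on the complement there is a.s.\ a finite generation $T$ with $\mathcal{Z}_T=0$, so $\mathcal{Z}_t=0$ for $t\ge T$, and each $\mathcal{Z}_t$ is a.s.\ finite because the offspring variables $\L^i$ are $\N$-valued, whence $\sZ^{\mathsf{MGW}}_{\mathsf{tot}}=\sum_{t<T}\mathcal{Z}_t<\infty$ a.s. This gives $\mathsf{Trans}_n\subseteq\{\sZ^{\mathsf{MGW}}_{\mathsf{tot}}\ge n\}\downarrow\mathsf{Surv}^{\mathsf{MGW}}$.

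For the converse I would first dispatch the easy inclusion. In the coupling the vertices touched up to $\tau_n$ are pairwise distinct elements of $\cT^{\circ}(\tau_n)\cup\cT^{\dagger}(\tau_n)\subseteq V(\T^{\mathsf{MGW}})$ (each enters the standby set $\cT^{\boxplus}$ once and leaves it once), so $\tau_n<\sZ^{\mathsf{MGW}}_{\mathsf{tot}}$ \emph{for every $n$ on the same sample point}. Hence on the extinction event $(\mathsf{Surv}^{\mathsf{MGW}})^{c}$ the number $\sZ^{\mathsf{MGW}}_{\mathsf{tot}}$ is a fixed finite constant, so $\sup_n\tau_n<\infty$ there and in particular $\tau_n=o(n)$. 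Thus for any $a_n\to\infty$ with $a_n=o(n)$ we get $(\mathsf{Surv}^{\mathsf{MGW}})^{c}\subseteq\liminf_n\{\tau_n\le a_n\}$, equivalently $\Prob\big[(\mathsf{Surv}^{\mathsf{MGW}})^{c}\setminus\{\tau_n\le a_n\}\big]\to0$.

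The hard part is the reverse inclusion: showing that termination at time $o(n)$ does not occur on $\mathsf{Surv}^{\mathsf{MGW}}$, i.e.\ $\Prob\big[\{\tau_n\le a_n\}\cap\mathsf{Surv}^{\mathsf{MGW}}\big]\to0$; combined with the previous step this gives convergence of the events $\{\tau_n\le a_n\}$ to $(\mathsf{Surv}^{\mathsf{MGW}})^{c}$, and in particular $\Prob[\tau_n\le a_n]\to\sigma^{\mathsf{MGW}}$. I would prove it by noting that while fewer than $o(n)$ vertices are infected each attempt fails with probability at most $\cN_t/\min_j n_j=o(1)$, so a union bound gives that with probability at least $1-\tfrac12 a_n^2/\min_j n_j$ no failure occurs in the first $a_n$ steps; on that event the process run up to time $a_n$ is exactly the breadth-first exploration of $\T^{\mathsf{MGW}}$, whose standby set $\cT^{\boxplus}$ never empties on $\mathsf{Surv}^{\mathsf{MGW}}$, so $\tau_n>a_n$. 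This crude argument covers $a_n=o(\sqrt n)$; to reach all $o(n)$ sequences one must instead invoke the linear lower bound $\tau_n\gtrsim n$ on $\mathsf{Surv}^{\mathsf{MGW}}$ that will come out of Theorem~\ref{t:LLN} in Section~\ref{sec:lln}. Controlling the survival regime in this way is the step I expect to require the most care.
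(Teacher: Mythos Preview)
Your argument is correct and follows the paper's approach. The paper dispatches the first inclusion in one line (``follows directly from the construction''), which is exactly your observation that $\cT^{\circ}(\tau_n)\subseteq V(\T^{\mathsf{MGW}})$ has $n$ elements on $\mathsf{Trans}_n$; your added justification that $\{\sZ^{\mathsf{MGW}}_{\mathsf{tot}}=\infty\}=\mathsf{Surv}^{\mathsf{MGW}}$ a.s.\ is the standard dichotomy and is fine.

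For the converse, the paper also forward-references, but to Lemma~\ref{lem:taun-eps} rather than to Theorem~\ref{t:LLN}. Lemma~\ref{lem:taun-eps} is precisely the statement $\Pb[\tau_n\ge n\epsilon,\ \mathsf{Surv}^{\mathsf{MGW}}]\to\Pb[\mathsf{Surv}^{\mathsf{MGW}}]$, which is exactly the ``hard half'' you isolate; Theorem~\ref{t:LLN} is stronger and is itself proved via Lemma~\ref{lem:taun-eps}, so the paper's citation is the more economical one. Your intermediate union-bound argument for $a_n=o(\sqrt{n})$ is correct but unnecessary once the linear lower bound is in hand. Your ``easy half'' (that $\tau_n<\sZ^{\mathsf{MGW}}_{\mathsf{tot}}$ for every $n$ on the common sample point, hence $\sup_n\tau_n<\infty$ on extinction) is a detail the paper leaves implicit.
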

\begin{proof}
The first claim follows directly from the construction. The converse claim is a consequence of Lemma \ref{lem:taun-eps}. 
\end{proof}

In the sequel we assume
\textsf{Assumption 2} holds, in order to ensure that the MGW is supercritical and hence $\Pb[\mathsf{Surv}^{\mathsf{MGW}}]>0$. 
{The assumption that all entries of $M$ are finite is a standard assumption in the literature on multitype Galton-Watson processes that helps to avoid technical problems. The assumption that $\rho(M)>1$} is close to being necessary for $\Pb[\mathsf{Surv}^{\mathsf{MGW}}]>0$ -- we have only excluded the case $\mathcal{L}^1+\dots + \mathcal{L}^J=1$ a.s., in which we would get very pathological problems. 

We want to show next, that also in the case of $J$ different types of coupons, the transmission takes place on a macroscopic time level. The proof is more involved than the one in the homogeneous case and uses a completely different idea.
\begin{lemma}\label{lem:taun-eps}
Suppose that \textsf{Assumption 1} and \textsf{Assumption 2} hold. There exists $\epsilon_0>0$ such that, for all
$\epsilon\in (0,\epsilon_0)$:
$$\lim_{n\to\infty}\Pb\left[\tau_n\geq n\epsilon,\mathsf{Surv}^{\mathsf{MGW}}\right]=\Pb\left[\mathsf{Surv}^{\mathsf{MGW}}\right]=1-\sigma^{\mathsf{MGW}}.$$
\end{lemma}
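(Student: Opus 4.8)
The plan is to reduce the statement to a lower bound for $\Pb[\tau_n\ge n\epsilon]$, and then, on the survival event, to exhibit the available capacity $\sS_t=\mathcal R_t-t$ as a random walk with strictly positive drift that has already climbed to a large height well before time $n\epsilon$. First, since $\{\tau_n\ge n\epsilon,\mathsf{Surv}^{\mathsf{MGW}}\}\subseteq\mathsf{Surv}^{\mathsf{MGW}}$ one trivially has $\limsup_n\Pb[\tau_n\ge n\epsilon,\mathsf{Surv}^{\mathsf{MGW}}]\le\Pb[\mathsf{Surv}^{\mathsf{MGW}}]$. On the complementary extinction event the tree $\T^{\mathsf{MGW}}$ is finite, and since by construction the virus process performs exactly $\tau_n$ attempts, each at a distinct vertex of $\T^{\mathsf{MGW}}$, we have $\tau_n\le\sZ^{\mathsf{MGW}}_{\mathsf{tot}}<\infty$ a.s.\ there; hence $\Pb[\tau_n\ge n\epsilon,(\mathsf{Surv}^{\mathsf{MGW}})^{\comp}]\to0$. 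So it suffices to prove $\liminf_n\Pb[\tau_n\ge n\epsilon]\ge\Pb[\mathsf{Surv}^{\mathsf{MGW}}]$.

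The next step is to record the drift of $\sS_t$. Writing $m_i:=\E[\L^i]$ and $\mathsf m:=(m_1,\dots,m_J)^\transp$, the multinomial description \eqref{eq:off-matrix} gives $m_{ij}=m_i\alpha_j$, so $M=\mathsf m\,\alpha^\transp$ has rank one, $M\mathsf m=(\alpha^\transp\mathsf m)\mathsf m$, and therefore $\rho(M)=\sum_i\alpha_i m_i$. By Proposition \ref{prop:var-li} the revealed capacities $(\L^i_l)$ are i.i.d.\ and independent of the past $\mathcal F_t$, while $\type^{\star}(X(t+1))$ has conditional law $(\alpha_1,\dots,\alpha_J)$ given $\mathcal F_t$ (as shown in the construction); combining this with \eqref{eq:tr-pb-MC} and \eqref{eq:s_t} one computes
\[\E\big[\sS_{t+1}-\sS_t\mid\mathcal F_t\big]=\rho(M)-1-\sum_{j=1}^J\alpha_j m_j\,\frac{\sN^j_t}{n_j}.\]
Since $\sN^j_t\le\cN_t$ and $n_j=\gamma_j n$, on $\{\cN_t<n\epsilon\}$ this is at least $\mu_\epsilon:=\big(1-\epsilon/\gamma_{\min}\big)\rho(M)-1$, where $\gamma_{\min}:=\min_i\gamma_i$. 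By \textsf{Assumption 2} we may set $\epsilon_0:=\gamma_{\min}\big(1-1/\rho(M)\big)>0$, so that $\mu_\epsilon>0$ for every $\epsilon\in(0,\epsilon_0)$; moreover $\E[(\sS_{t+1}-\sS_t)^2\mid\mathcal F_t]\le V$ for some finite $V$, using that the $\L^i$ have finite second moments.

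Now fix a large integer $\ell$. Up to the first collision — the first step at which the sampled coupon has already been collected — the virus process coincides with the breadth-first exploration of $\T^{\mathsf{MGW}}$, so $\sS_t=|\cT^{\boxplus}(t)|$ equals the size of its BFS queue. On $\mathsf{Surv}^{\mathsf{MGW}}$ the generation sizes $\mathcal Z_t\to\infty$, so this queue is a.s.\ unbounded; hence $\theta_\ell$, the first time the BFS queue of $\T^{\mathsf{MGW}}$ reaches size $\ell$ (a functional of $\T^{\mathsf{MGW}}$ alone, not depending on $n$), is finite a.s.\ on $\mathsf{Surv}^{\mathsf{MGW}}$. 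A birthday-type estimate gives $\Pb[\text{first collision}\le m]\le Cm^2/n$ for all $m$, with $C$ depending only on the $\alpha_i,\gamma_i$, so by dominated convergence $\Pb[\theta_\ell<\text{first collision}]\to\Pb[\theta_\ell<\infty]\ge\Pb[\mathsf{Surv}^{\mathsf{MGW}}]$. On $\{\theta_\ell<\text{first collision}\}$ the virus process agrees with the tree's BFS up to time $\theta_\ell$, so $\theta_\ell<\tau_n$ and $\sS_{\theta_\ell}\ge\ell$.

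Finally, on $\{\tau_n<n\epsilon\}$ we have $\cN_t\le t<n\epsilon$ for all $t<\tau_n$, so by the second paragraph, for $\theta_\ell\le t\le\tau_n\wedge n\epsilon$,
\[\sS_t\ \ge\ \ell+\mu_\epsilon(t-\theta_\ell)+D_t,\qquad D_t:=\sum_{s=\theta_\ell}^{t-1}\Big(\sS_{s+1}-\sS_s-\E[\sS_{s+1}-\sS_s\mid\mathcal F_s]\Big),\]
where $D$ is a martingale with conditional increment variance at most $V$. On $\{\tau_n<n\epsilon,\ \theta_\ell<\text{first collision}\}$ the walk $\sS$ hits $0$ somewhere on $[\theta_\ell,n\epsilon]$, which forces $D_t\le-\ell-\mu_\epsilon(t-\theta_\ell)$ for some such $t$; splitting $[\theta_\ell,n\epsilon]$ into dyadic time blocks and applying Doob's $L^2$ maximal inequality on each block bounds the probability of this event by $C'V/(\mu_\epsilon\ell)$, uniformly in $n$, for a universal constant $C'$. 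Hence $\Pb[\tau_n\ge n\epsilon]\ge\Pb[\theta_\ell<\text{first collision}]-C'V/(\mu_\epsilon\ell)$, so $\liminf_n\Pb[\tau_n\ge n\epsilon]\ge\Pb[\mathsf{Surv}^{\mathsf{MGW}}]-C'V/(\mu_\epsilon\ell)$, and letting $\ell\to\infty$ completes the proof. The main obstacle is precisely this last estimate: a biased walk started at height $\ell$ must stay positive on the whole growing window $[\theta_\ell,n\epsilon]$ with probability tending to $1$ as $\ell\to\infty$; a single Chebyshev bound is not summable in time, so one must organize the estimate over dyadic blocks, which is where the second moment of the capacities enters.
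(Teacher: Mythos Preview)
Your approach is genuinely different from the paper's. The paper builds an auxiliary ``$\mathcal{U}$-model'': after a burn-in of $g_k$ generations (with $k\sim(\log n)^2$) in which all transmissions are declared successful, it dominates the infection from below by a thinned MGW with mean matrix $(1-\epsilon')M$, still supercritical; on $\mathsf{Surv}^{\mathsf{MGW}}$ one has $\mathcal{Z}_{g_k}\to\infty$, each of these individuals seeds an independent copy of the thinned tree, so at least one survives with probability tending to~$1$, which forces $|\cT^\circ(\tau_n)|\ge n\epsilon$. Your drift--martingale route is more direct, avoids the auxiliary construction, and lands on exactly the same $\epsilon_0=\gamma_{\min}\bigl(1-1/\rho(M)\bigr)$; the paper's coupling is less quantitative but uses only first moments.

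That last point is the gap in your argument. The lemma is stated under Assumptions~1 and~2 only, and it is invoked in Theorem~\ref{t:LLN} where merely $\E[\L^i]<\infty$ is assumed; yet your dyadic Doob $L^2$ estimate requires the conditional increment variance $V$ of the compensated walk to be finite, and $V\le1+\max_i\E[(\L^i)^2]$. As written you have imported an extra hypothesis. The fix is a truncation \emph{after} the warm start: for $t\ge\theta_\ell$ set $\sS^K_{t+1}-\sS^K_t:=(\sS_{t+1}-\sS_t)\wedge(K-1)$ with $\sS^K_{\theta_\ell}:=\sS_{\theta_\ell}\ge\ell$; then $\sS^K_t\le\sS_t$, the truncated drift on $\{\cN_t<n\epsilon\}$ is at least $(1-\epsilon/\gamma_{\min})\sum_j\alpha_j\E[\L^j\wedge K]-1$, which is positive for $K$ large (depending only on $\epsilon<\epsilon_0$) by monotone convergence, and now the increments are bounded so your dyadic-block bound gives $\Pb[\exists\,t\ge\theta_\ell:\sS^K_t\le0]\le C'V_K/(\mu_{\epsilon,K}\ell)\to0$ as $\ell\to\infty$. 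Two smaller points: your $\theta_\ell$ is a functional of $\T^{\mathsf{MGW}}$ and not literally an $(\mathcal F_t)$-stopping time---replace it by $\theta'_\ell:=\inf\{t:\sS_t\ge\ell\}$, which coincides with $\theta_\ell$ on $\{\theta_\ell<\text{first collision}\}$ and to which the strong Markov property applies; and the birthday bound must hold \emph{conditionally} on the type sequence coming from the tree (it does, with constant $C=1/(2\gamma_{\min})$), so that dominated convergence indeed gives $\Pb[\theta_\ell<\text{first collision}]\to\Pb[\theta_\ell<\infty]$.
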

\begin{proof}
Put 
$$\epsilon_1:=\left(1-\frac{1}{\rho(M)}\right)\quad \text{and}\quad \epsilon_0:=\min\{\gamma_1, \dots, \gamma_J\} \cdot \epsilon_1$$ and let $\epsilon\in(0,\epsilon_0)$. Then there exists an $\epsilon'\in(0,\epsilon_1)$ with $\epsilon= \min\{ \gamma_1, \dots, \gamma_J\} \cdot \epsilon'$. We have
\[ \{\tau_n \geq n\epsilon\} \supseteq \{ | \cT^{\circ}(\tau_n)| \ge n\epsilon \} \]
since $| \cT^{\circ}(t)|\le t$ for all $t\in\mathds{N}$. In order to find a lower bound on the probability $\Pb\left[|\cT^{\circ}(\tau_n)|\geq n\epsilon,\mathsf{Surv}^{\mathsf{MGW}}\right]$ we construct a new process $\left(\mathcal{U}^\circ(r), \mathcal{U}^\boxplus(r), \mathcal{U}^\dagger(r)\right)$, $r\in\mathds{N}$, similar to $(\cT^\circ(t), \cT^\boxplus(t), \cT^\dagger(t))$.
To this aim, given a fixed $k$ we let $g_k$ be the last generation in the Galton-Watson process such that the total number of individuals ever born does not exceed $k$, that is
$$g_k=\max\left\{g\in\mathds{N} \bigg | \sum_{t=1}^g\mathcal{Z}_t\le k\right\}.$$
 Then
 \vspace{-0.25cm}
\begin{itemize}
\setlength\itemsep{0em}
\item at each epoch $r$, a new vertex of the GW tree is added to $\mathcal{U}^\circ(r)$, until the process reaches generation $g_k+1$;
\item from generation $g_k+1$, vertices of the GW-tree are added with probability $1-\epsilon'$;
\item moreover, from generation $g_k+1$,
as long as only a small fraction - smaller than $\epsilon'$ - of the available "coupons" of each type
has been collected, the vertices that are added to $\mathcal{U}^\circ(r)$ are chosen from the ones in
 $\cT^\circ(t_r)$ ($t_r$ is a function of $r$) .
\end{itemize}
 \vspace{-0.25cm}
Now let $k=\lceil (\log n)^2 \rceil$ and let
$$k_1:=\mathcal{Z}_{1} + \dots + \mathcal{Z}_{g_k}$$ denote the total number of individuals up to the $g_k$-th generation.
We set
\[\left(\mathcal{U}^\circ(0), \mathcal{U}^\boxplus(0), \mathcal{U}^\dagger(0)\right):= \left(\mathcal{T}^\circ(0), \mathcal{T}^\boxplus(0), \mathcal{T}^\dagger(0)\right) \]
and assume that $(\mathcal{U}^\circ(r), \mathcal{U}^\boxplus(r), \mathcal{U}^\dagger(r))$ has been defined for some $r$. Conditionally on $\mathcal{U}^\boxplus(r)\ne \emptyset$ we define $(\mathcal{U}^\circ(r+1), \mathcal{U}^\boxplus(r+1), \mathcal{U}^\dagger(r+1))$ by picking the element $v\in \mathcal{U}^\boxplus(r)$ of least generation, among elements of same generation the first one in the lexicographic order, and distinguishing the following cases:
 \vspace{-0.25cm}
\begin{itemize}
\setlength\itemsep{0em}
\item If $|v| \le g_k$, i.e.\ $v$ belongs to the first $g_k$ generations, then assume that the transmission is successful, i.e.
\begin{align*}
\mathcal{U}^\circ(r+1) &:= \mathcal{U}^\circ(r) \cup \{v\}\\
\mathcal{U}^\boxplus(r+1) &:= \mathcal{U}^\boxplus(r) \setminus \{v\} \cup \{v_1, \dots, v_{\mathcal{L}^{i}(v)}\} \\
\mathcal{U}^\dagger(r+1) &:= \mathcal{U}^\dagger(r),
\end{align*}   
where $v_1, \dots, v_{\mathcal{L}^{i}(v)}$ are the children of $v$ in the Galton-Watson tree $\cT$. 
\item If $|v|>g_k$ and $v\in\cT^\dagger(t+1)\setminus\cT^\dagger(t)$ for some $t\in\{1,\dots, \tau_n\}$ such that $\sN_t^j\le\epsilon' n_j$ for all $j\in\{1, \dots, J\}$, i.e.\ the transmission in the old $\mathcal{T}$-model is attempted at some medium time and fails, then the transmission fails also in the new $\mathcal{U}$-model, i.e. 
\begin{align*}
\mathcal{U}^\circ(r+1) &:= \mathcal{U}^\circ(r) \\
\mathcal{U}^\boxplus(r+1) &:= \mathcal{U}^\boxplus(r) \setminus \{v\} \\
\mathcal{U}^\dagger(r+1) &:= \mathcal{U}^\dagger(r)\cup \{v\}.
\end{align*}  
\item If $|v|>g_k$, $v\in\mathcal{T}^\circ(t+1)\setminus \mathcal{T}^\circ(t)$ for some $t\in \{1, \dots, \tau_n\}$ such that $\sN_t^j\le\epsilon' n_j$ for all $j\in [J]$, 
then let the transmission be successful with probability $\frac{1-\epsilon'}{(n_i -\sN_t^i)/n_i}$, where $i=\type(v)$, conditionally on everything defined so far. 
\item If either $v\in (\cT^\circ(t+1) \cup \cT^\dagger(t+1))\setminus (\cT^\circ(t) \cup \cT^\dagger(t))$ for some $t\in \{1, \dots, \tau_n\}$ such that $\sN_t^j>\epsilon' n_j$ for some $j\in\{1, \dots, J\}$ or $v\notin \cT^\circ(\tau_n)\cup \cT^\dagger(\tau_n)$, then the transmission fails with probability
 $\epsilon'$
 conditionally on everything defined so far and is successful otherwise.
\end{itemize}
This construction is summarized in Table \ref{T:U-constr}. If in any of these cases $t\in\{1,\dots, \tau_n\}$ with $v\in(\mathcal{T}^\circ(t+1)\cup\mathcal{T}^\dagger(t+1))\setminus (\mathcal{T}^\circ(t)\cup\mathcal{T}^\dagger(t))$ exists, then denote $t+1$ by $t_{r+1}$. 

\begin{table}%
\begin{tabular}{ll||l|l|}
&& \begin{minipage}{4.5cm} transmission in $\cT$-model successful \end{minipage} & \begin{minipage}{3cm}transmission in $\cT$-model fails \end{minipage} \\\hline\hline
generation $\le g_k$ &&\multicolumn{2}{c|}{transmission successful} \\\hline
generation $>g_k$ & \begin{minipage}{2.5cm} $\sN_t^j\le\epsilon' n_j$ for all $j\in [J]$ \end{minipage}& \begin{minipage}{4.5cm} transmission successful with probability $\frac{1-\epsilon'}{(n_i -\sN_t^i)/n_i}$ \end{minipage} & transmission fails\\\hline
generation $>g_k$ & \begin{minipage}{2.5cm}{$\sN_t^j>\epsilon' n_j$ for some $j\in [J]$ \\ or $t$ not defined}\end{minipage} & \multicolumn{2}{c|}{ transmission successful with probability $1-\epsilon'$} \\\hline
\end{tabular}
\caption{Summary of the construction of the $\mathcal{U}$-model}
\label{T:U-constr}
\end{table}

If $\mathcal{U}^\boxplus(r)=\emptyset$, the process stops and we set $\rho_n=r$. 
Put
$$r_0:= \max\left\{ r'\in\{1,\dots, \rho_n\} \mid \text{$t_r$ is defined for all $r\le r'$}\right\}.$$
Then either $r_0=\rho_n$ or for the vertex $v\in\T^{\mathsf{MGW}}$ chosen in the $r_0+1$-st step there is no $t\in\{1,\dots, \tau_n\}$ with $v\in (\cT^\circ(t+1) \cup \cT^\dagger(t+1)) \setminus (\cT^\circ(t) \cup \cT^\dagger(t))$. The latter however means, that at the time step $r_1\in\{1, \dots, r_0\}$, when $v$ was added to $\mathcal{U}^\boxplus(r)$, it was not added to $\cT^\boxplus(t)$, so the transmission in the $\mathcal{T}$-model at time $t_{r_1}$ failed, while the transmission in the $\mathcal{U}$-model at time $r_1$ was successful. This is possible in two cases: either the vertex $v_1$ treated at time $r_1$ belongs to the first $g_k$ generations or $\sN_{t_{r_1}}^j>\epsilon' n_j$ for some $j\in\{1,\dots, J\}$. Now assume that the event $\mathsf{Succ}_{g_k}$, that in the first $g_k$ generations all transmissions are successful in the $\mathcal{T}$-model, occurs. Then $v_1$ belonging to the first $g_k$ generations cannot explain anymore that the transmission in the $\mathcal{U}$-model at time $r_1$ is successful, while the transmission in the $\mathcal{T}$-model at time $t_{r_1}$ fails, and we are left with the case that $\sN_{t_{r_1}}^j>\epsilon' n_j$ for some $j\in\{1,\dots, J\}$. Since $r_0\ge r_1$ we get $\sN_{t_{r_0}}^j>\epsilon' n_j$ for some $j\in\{1,\dots, J\}$. In conclusion we have
\begin{equation*} \mathsf{Succ}_{g_k} \subseteq \{ \rho_n =r_0\} \cup \{\sN_{t_{r_0}}^j>\epsilon' n_j \text{ for some } j\in\{1,\dots, J\} \}. \label{e:rho0_alternative} \end{equation*}
Since $\tau_n\ge t_{r_0}$ by the definition of $r_0$ and since $\mathsf{N}_{t_{r_0}}^j = |\mathcal{T}_j^\circ(t_{r_0})| \le | \cT^\circ(t_{r_0})|$, we get
\begin{align*}
\Pb\left[\tau_n\geq n\epsilon,\mathsf{Surv}^{\mathsf{MGW}}\right] &\ge \Pb\left[| \cT^\circ(\tau_n)| \ge n\epsilon,\mathsf{Surv}^{\mathsf{MGW}}\right]\\
 &\ge \Pb\left[| \cT^\circ(t_{r_0})| \ge n\epsilon,\mathsf{Surv}^{\mathsf{MGW}}\right]\\
& \ge \Pb \left[\sN_{t_{r_0}}^j>\epsilon' n_j \text{ for some } j\in\{1,\dots, J\}\right] \\
&\ge \Pb\left[ \rho_n=\infty, \mathsf{Succ}_{g_k}\right]
\ge \Pb\left[ \rho_n=\infty\right]+ \Pb\left[\mathsf{Succ}_{g_k}\right] -1.
\end{align*}
In order to estimate $\mathbb{P}[\mathsf{Succ}_{g_k}]$, put $\bar \beta:=\max\{\beta_1, \dots, {\beta_J}\}$. Notice that the probability that in one step a new vertex is infected is bounded from below by $1-\bar\beta\cdot j/n$ given that $j$ vertices have already been infected. Then
\[
1-\mathbb{P}[\mathsf{Succ}_{g_k}] \le 1-\left(1-\bar\beta\frac{1}{n}\right) \cdot \dots \cdot \left(1-\bar\beta\frac{k}{n}\right) 
\le 1-\left(1-\bar\beta \frac{k}{n}\right)^k 
 \sim \bar\beta\frac{k^2}{n} \to 0,
\]
since $k^2/n\to 0$ as $n\to \infty$.

Let us now turn to the probability of the event $\{\rho_n=\infty\}$ that the process in the $\mathcal{U}$-model survives forever. For any vertex $v$ of the $g_k$-th generation, let $\mathsf{Surv(}v\mathsf{)}$ denote the event that $v$ has infinitely many descendants 
in the $\mathcal U$-model.
 Clearly $\{\rho_n=\infty\} = \bigcup_{l=1}^{\mathcal{Z}_{g_k}} \mathsf{Surv(}v_l\mathsf{)}$, where $v_1, \dots, v_{\mathcal{Z}_{g_k}}$ are the members of the $g_k$-th generation. Notice that the systems of descendants of 
these vertices form independent Galton-Watson trees that are independent of the process $\mathcal{U}^\circ$ up to time 
$k_1$. 
This is a consequence of the fact that the remainder of an i.i.d.\ sequence after a stopping time is again an i.i.d.\ sequence. 

Observe that the offspring distribution of these Galton-Watson trees is given by $\bar{L}^{(i,j)} = \sum_{k=1}^{L^{(i,j)}} \delta_k^{(i,j)}$, where $\delta_1^{(i,j)}, \dots \delta_{L^{(i,j)}}^{(i,j)}$ are independent random variables with distribution
$$\mathbb{P}[\delta_k^{(i,j)}=1]=1-\epsilon'\quad \text{and}\quad \mathbb{P}[\delta_k^{(i,j)}=0]=\epsilon' \quad\text{for} \quad i,j=1,\dots, J,\, k=1, \dots, L^{(i,j)}.$$
Hence the mean value matrix is $(1-\epsilon')M$, and in particular it has spectral radius bigger than $1$. So the probability of the event $\mathsf{Surv(}v_l\mathsf{)}$, that this Galton-Watson process survives, is positive and independent of $l$ or $n$.  
Hence we get for any $z\in\mathds{N}$ that
\[ \mathbb{P}[\rho_n<\infty] \le \mathbb{P} \left[ \{\mathcal{Z}_{g_k} \le z\} \cup \left(\mathsf{Surv(} v_1\mathsf{)}^\complement \cap \dots \cap \mathsf{Surv(}v_{z+1} \mathsf{)}^\complement\right) \right] \le \mathbb{P}[\mathcal{Z}_{g_k} \le z] + (1-\delta)^{z+1},	\]
where $\delta:=\Pb[\mathsf{Surv(}v\mathsf{)}]>0$. 
Therefore
\[ \lim_{n\to\infty} \Pb\left[\tau_n\geq n\epsilon,\mathsf{Surv}^{\mathsf{MGW}}\right] \ge \lim_{n\to\infty} \Pb[ \rho_n=\infty] \ge 1 - \lim_{n\to\infty} \mathbb{P}[\mathcal{Z}_{g_k} \le z] - (1-\delta)^{z+1}.\]
Since $g_k \to \infty$ as $n\to\infty$ almost surely and $ \mathcal{Z}_g\to\infty$ as $g\to\infty$ almost surely conditioned on $\mathsf{Surv}^\mathsf{MGW}$, where the latter is a consequence of \cite[Section II, Theorem 6.1]{Har64}, we have that the total number $\mathcal{Z}_{g_k}$ of individuals in the $g_k$-th generation tends to infinity as $n\to\infty$ conditionally on $\mathsf{Surv^{MGW}}$ almost surely. So
\begin{align*}
\lim_{n\to\infty} \Pb[\mathcal{Z}_{g_k} \le z] &= \lim_{n\to\infty} \Pb\left[\mathcal{Z}_{g_k} \le z, \mathsf{Surv}^\mathsf{MGW}\right] + \lim_{n\to\infty} \Pb\left[\mathcal{Z}_{g_k} \le z, (\mathsf{Surv}^\mathsf{MGW})^\complement\right] \\
&= \Pb\left[(\mathsf{Surv}^\mathsf{MGW})^\complement\right]
\end{align*}
and thus
\[ \lim_{n\to\infty} \Pb\left[\tau_n\geq n\epsilon,\mathsf{Surv}^{\mathsf{MGW}}\right] \ge \Pb\left[\mathsf{Surv}^\mathsf{MGW}\right]  - (1-\delta)^{z+1},\]
which shows the assertion, since $z\in\mathds{N}$ was arbitrary.\end{proof}

\section{Law of large numbers}\label{sec:lln}

Let $\theta$ denote the unique solution in $(0,\infty)$ of
\begin{equation}\label{eq:ftheta}
f(\theta): = \sum_{i=1}^J\E[\L^i]\underbrace{\gamma_i(1-e^{-\beta_i \theta})}_{:=w_i(\theta)}-\theta=0,
\end{equation}
or written in another way $\inp{\E[\L]}{\mathsf{w}(\theta)}=\theta$, where $\E[\L]=(\E[\L^1],\ldots,\E[\L^J])$ and 
$$\mathsf{w}(\theta)=(\gamma_1(1-e^{-\beta_1\theta}),\ldots,\gamma_J(1-e^{-\beta_J\theta}))$$ 
and $\inp{\E[\L]}{\mathsf{w}(\theta)}$ is the usual scalar product of the two involved vectors. Observe that there is a solution, since $f(0)=0$, $f'(0)=\sum_{i=1}^J \E[\L^i] \gamma_i\beta_i-1=\rho(M)-1>0$ due to \textsf{Assumption 2}, $\lim_{\theta\to\infty} f(\theta)=-\infty$ and $f$ is continuous. Moreover, the solution is unique, since $f''(\theta)= -\sum_{i=1}^J \E[\L^i] \gamma_i\beta_i^2 e^{-\beta_i \theta} <0$ for $\theta \in [0,\infty)$ and therefore $f(\theta)=0$ can have at most two solutions in $[0,\infty)$ and one of these solutions is given by $\theta=0$.

\begin{theorem}\label{t:LLN}
{Assume that \textsf{Assumption 1} and \textsf{Assumption 2} hold.} 
\begin{enumerate}
\item Then
$$\frac{\tau_n}{n} \longrightarrow \theta  \ind{\mathsf{Surv^{MGW}}} \quad \text{and} \quad \frac{\tilde\tau_n}{n}\longrightarrow \theta  \ind{\mathsf{Surv^{MGW}}} $$
in probability as $n\to\infty$. 
\item Also
$$\frac{\cN_{\tau_n}}{n}\longrightarrow\inp{\mathsf{w}(\theta)}{\mathsf{1}} \ind{\mathsf{Surv^{MGW}}} \quad\text{and} \quad \frac{\PoisCN_{\PoisTau_n}}{n} \longrightarrow\inp{\mathsf{w}(\theta)}{\mathsf{1}} \ind{\mathsf{Surv^{MGW}}} $$
in probability as $n\to\infty$, where $\cN_t=\sum_{i=1}^J\sN^i_t$ and $\PoisCN_{t}=\sum_{i=1}^J\PoisN^i_{t}.$
\end{enumerate}
\end{theorem}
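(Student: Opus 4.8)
The plan is to run the whole argument in the Poissonized picture, establish the two limits for $\tilde\tau_n$ and $\PoisCN_{\tilde\tau_n}$, and then transfer back to $\tau_n$ and $\cN_{\tau_n}$ using $\tau_n=\mathcal P_{\tilde\tau_n}$ together with $\mathcal P_t/t\to1$ a.s. The backbone is a fluid limit for the available-capacity process: recalling from \eqref{eq:s_t} that in the extended model $\PoisS_{ns}=\sum_{i=1}^J\tilde{\mathcal{R}}^i_{ns}-\mathcal P_{ns}$ with $\tilde{\mathcal{R}}^i_{ns}=\sum_{l=1}^{\PoisN^i_{ns}}\L^i_l$, I would first show that $\frac{1}{n}\PoisS_{ns}\to f(s)$ in probability, uniformly for $s$ in compact sets, with $f$ as in \eqref{eq:ftheta}. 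This is a composition of laws of large numbers: $\PoisN^i_{ns}/n\to w_i(s)$ uniformly on compacts (Glivenko--Cantelli for the i.i.d.\ indicators $\ind{E_k\le ns}$ underlying Proposition \ref{prop:clt-n_i}); $\frac{1}{n}\sum_{l=1}^{\lfloor ns\rfloor}\L^i_l\to s\,\E[\L^i]$ uniformly on compacts (functional SLLN, valid since $0<\E[\L^i]<\infty$ and, by Proposition \ref{prop:var-li}, the $\L^i_l$ are i.i.d.); and $\mathcal P_{ns}/n\to s$ uniformly on compacts. Composing the first two and summing over $i$ gives the claim. We use repeatedly that $f(0)=0$, that $f$ is strictly concave with $f'(0)=\rho(M)-1>0$ by \textsf{Assumption 2}, and that $\theta$ is its unique positive zero, so $f>0$ on $(0,\theta)$ and $f<0$ on $(\theta,\infty)$.

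\textbf{Upper bound and the extinction event.} For any $\delta>0$ we have $f(\theta+\delta)<0$, so the fluid limit gives $\PoisS_{n(\theta+\delta)}<0$ with high probability; since $\PoisS$ starts nonnegative and decreases by exactly one at each of its downward jumps, it must have hit $0$ before time $n(\theta+\delta)$, i.e.\ $\tilde\tau_n\le n(\theta+\delta)$ w.h.p., and this is unconditional. On $(\mathsf{Surv^{MGW}})^c$ the tree $\T^{\mathsf{MGW}}$ is a.s.\ finite, and the infection process visits $\tau_n$ pairwise distinct vertices of $\T^{\mathsf{MGW}}$, so $\cN_{\tau_n}\le\tau_n\le\sZ^{\mathsf{MGW}}_{\mathsf{tot}}<\infty$ a.s.; consequently $\tau_n/n\to0$ and $\cN_{\tau_n}/n\to0$ a.s.\ on extinction, and $\tilde\tau_n=t_{\tau_n}$ is a.s.\ bounded there, so $\tilde\tau_n/n\to0$ as well. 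Thus all four quantities vanish on $(\mathsf{Surv^{MGW}})^c$, in agreement with the right-hand sides.

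\textbf{Lower bound on $\mathsf{Surv^{MGW}}$.} This is the delicate point and the place where Lemma \ref{lem:taun-eps} is indispensable: the fluid limit by itself is uninformative for small $s$, because $f(s)\approx(\rho(M)-1)s$ there, so it cannot exclude early extinction. Fix $\delta\in(0,\theta)$ and pick $\epsilon\in(0,\epsilon_0)$ with $\epsilon<\theta-\delta$. Lemma \ref{lem:taun-eps} gives $\Pb[\tau_n\ge n\epsilon,\mathsf{Surv^{MGW}}]\to\Pb[\mathsf{Surv^{MGW}}]$; combined with $\mathcal P_{n\epsilon/2}/n\to\epsilon/2<\epsilon$ and $\tau_n=\mathcal P_{\tilde\tau_n}$, this yields $\tilde\tau_n>n\epsilon/2$ w.h.p.\ on $\mathsf{Surv^{MGW}}$, i.e.\ $\PoisS_t>0$ for $t\le n\epsilon/2$. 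On the other hand, on the compact interval $[\epsilon/4,\theta-\delta]\subset(0,\theta)$ we have $\min f>0$, so the uniform fluid limit gives $\PoisS_t>0$ for all $t\in[n\epsilon/4,n(\theta-\delta)]$ w.h.p.\ (unconditionally). The two time ranges overlap and together cover $[0,n(\theta-\delta)]$, so $\tilde\tau_n\ge n(\theta-\delta)$ w.h.p.\ on $\mathsf{Surv^{MGW}}$. With the unconditional upper bound this gives $\Pb\big[|\tilde\tau_n/n-\theta|\le\delta,\mathsf{Surv^{MGW}}\big]\to\Pb[\mathsf{Surv^{MGW}}]$ for every $\delta$, hence $\tilde\tau_n/n\to\theta\,\ind{\mathsf{Surv^{MGW}}}$ in probability.

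\textbf{Transfer and the counting statement.} Since $\tau_n=\mathcal P_{\tilde\tau_n}$ and $\mathcal P$ is nondecreasing, on the w.h.p.\ event $\{\tilde\tau_n\in[n(\theta-\delta),n(\theta+\delta)]\}$ we have $\mathcal P_{n(\theta-\delta)}\le\tau_n\le\mathcal P_{n(\theta+\delta)}$, and $\mathcal P_{n(\theta\pm\delta)}/n\to\theta\pm\delta$; hence $\tau_n/n\to\theta\,\ind{\mathsf{Surv^{MGW}}}$ in probability, which is Part 1. For Part 2 it suffices to treat $\PoisCN_{\tilde\tau_n}$, since $\PoisCN_{\tilde\tau_n}=\mathcal N_{\mathcal P_{\tilde\tau_n}}=\cN_{\tau_n}$. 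As in the first paragraph, $\PoisCN_{ns}/n=\sum_{i=1}^J\PoisN^i_{ns}/n\to\sum_{i=1}^Jw_i(s)=\inp{\mathsf{w}(s)}{\mathsf{1}}$ uniformly on compacts; composing this uniform convergence with $\tilde\tau_n/n\to\theta$ on $\mathsf{Surv^{MGW}}$, and using continuity of $s\mapsto\inp{\mathsf{w}(s)}{\mathsf{1}}$ together with $\inp{\mathsf{w}(0)}{\mathsf{1}}=0$ on the extinction event, we obtain $\PoisCN_{\tilde\tau_n}/n\to\inp{\mathsf{w}(\theta)}{\mathsf{1}}\,\ind{\mathsf{Surv^{MGW}}}$, i.e.\ $\cN_{\tau_n}/n\to\inp{\mathsf{w}(\theta)}{\mathsf{1}}\,\ind{\mathsf{Surv^{MGW}}}$ in probability. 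The main obstacle throughout is the lower bound on $\mathsf{Surv^{MGW}}$, i.e.\ ruling out that the epidemic dies in microscopic time; this is exactly the content of Lemma \ref{lem:taun-eps}, and with that lemma in hand the remaining work is the careful but routine bookkeeping of the uniform LLNs and the passages between discrete and continuous time.
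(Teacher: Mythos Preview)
Your proof is correct and follows essentially the same route as the paper: the fluid limit $\PoisS_{ns}/n\to f(s)$ uniformly on compacts, Lemma~\ref{lem:taun-eps} to rule out early extinction on $\mathsf{Surv^{MGW}}$, and the identity $\cN_{\tau_n}=\PoisCN_{\tilde\tau_n}$ for the counting statement. The only cosmetic difference is the order of the discrete/continuous transfers: the paper first proves the limit for $\tau_n$ directly (passing to continuous time only inside the estimate) and then deduces $\tilde\tau_n$ via $\tilde\tau_n=\sum_{k=1}^{\tau_n}E_k$, whereas you establish $\tilde\tau_n$ first and recover $\tau_n$ via $\tau_n=\cP_{\tilde\tau_n}$; both directions work equally well.
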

\begin{proof}
 \underline{Proof of $\frac{\tau_n}{n}\to\theta\ind{\mathsf{Surv^{MGW}}}$ in probability as $n\to\infty$.}\\
We use a similar approach as in \cite[Theorem 2.2]{comets14}.
We have $\mathcal{R}^i_t=\sum_{l=1}^{\sN_t^i}\L^i_l$, where for each $i=1,\dots,J$, $\L^i_l$ are i.i.d.\ distributed like $\L^i$ with $\E[\L^i]<\infty$.
Also $\PoisN^1_t,\ldots,\PoisN^J_t$ are independent and for 
every $i$, $\frac{\PoisN^i_{ns}}{n}\to \gamma_i(1-e^{-\beta_is})=w_i(s) $ in probability as $n\to\infty$ 
{(see Proposition \ref{prop:clt-n_i})}, and 
$$\tilde{\mathsf{S}}_t=\mathsf{S}_{\mathcal{P}_t}=\sum_{l=1}^{\sN^1_{\mathcal{P}_t}}\L^1_l+\ldots+\sum_{l=1}^{\sN^J_{\mathcal{P}_t}}\L^J_l-\mathcal{P}_t.$$
Using that $\PoisN^i_t=\sN^i_{\mathcal{P}_t}$ for all $i\in [J]$, we can in turn write
$$\tilde{\mathsf{S}}_t=\mathsf{S}_{\mathcal{P}_t}=\sum_{l=1}^{\PoisN^1_t}\L^1_l+\ldots+\sum_{l=1}^{\PoisN^J_t}\L^J_l-\mathcal{P}_t.$$
Therefore 
\begin{align*}
\frac{\tilde{\mathsf{S}}_{ns}}{n}&=\frac{\mathsf{S}_{\mathcal{P}_{ns}}}{n}=\frac{1}{n}\left(\sum_{l=1}^{\PoisN^1_{ns}}\L^1_l+\ldots+\sum_{l=1}^{\PoisN^J_{ns}}\L^J_l-\mathcal{P}_{ns}\right)\\
&=\underbrace{\frac{\sum_{l=1}^{\PoisN^1_{ns}}\L^1_l}{\PoisN^1_{ns}}}_{\to \E[\L^1]}\cdot \underbrace{\frac{\PoisN^1_{ns}}{n}}_{\to \gamma_1(1-e^{-\beta_1 s})}+\cdots+\underbrace{\frac{\sum_{l=1}^{\PoisN^J_{ns}}\L^J_l}{\PoisN^J_{ns}}}_{\to\E[\L^J]}\cdot \underbrace{\frac{\PoisN^J_{ns}}{n}}_{\to \gamma_J (1-e^{-\beta_J s})}-\underbrace{\frac{\mathcal{P}_{ns}}{n}}_{\to s}\\
&\to \sum_{i=1}^J\E[\L^i]\gamma_i(1-e^{-\beta_i s})-s=\inp{\E[\L]}{\mathsf{w}(s)}-s
\end{align*}
in probability, uniformly on compacts of $\mathbb{R}_{+}$, where in the second line above we have applied the weak law of large numbers 
for the sequences of involved random variables, and the last line holds in view of the continuous mapping theorem. 
Now, for any $\delta>0$ and $\epsilon>0$, we have
\begin{align*}
\Pb\left[\left|\tau_n-n\theta\ind{\mathsf{Surv^{MGW}}}\right| >n\delta\right]&=\Pb\left[|\tau_n-n\theta|>n\delta,\mathsf{Surv^{MGW}} \right]+\Pb\left[\tau_n>n\delta,\left(\mathsf{Surv^{MGW}}\right)^\complement \right]\\
&\leq \Pb\left[|\tau_n-n\theta|>n\delta,\mathsf{Surv^{MGW}},\tau_n\geq n\epsilon \right]\\
&\quad+ \Pb\left[\tau_n<n\epsilon,\mathsf{Surv^{MGW}} \right]+\Pb\left[\tau_n>n\delta,\left(\mathsf{Surv^{MGW}}\right)^\complement \right].
\end{align*}
On the right hand side of the above inequality, the second term tends to $0$ as $n\to\infty$ in view of Lemma \ref{lem:taun-eps}. The third term vanishes also as $n\to\infty$, since $\tau_n$ is smaller than the extinction time of the multitype Galton-Watson process, which is a.s.\ finite on the extinction event $\left(\mathsf{Surv^{MGW}}\right)^\complement$. Finally, for the first term,
by picking $\epsilon<\theta-\delta$,
we have the following:
\begin{align*}
\limsup_{n\to\infty}&\ \Pb\left[|\tau_n-n\theta|>n\delta,\mathsf{Surv^{MGW}},\tau_n\geq n\epsilon \right]\\
&\leq  \limsup_{n\to\infty}
\Big(
 \Pb\left[n\epsilon\leq \tau_n\leq n\theta-n\delta,\mathsf{Surv^{MGW}}\right]+ \Pb\left[ \tau_n > n\theta + n\delta, \mathsf{Surv^{MGW}} \right]
\Big)\\
&\leq \limsup_{n\to\infty}
\Big(\Pb\left[\min\{\mathsf{S}_{\floor{ns}};\ s\in [\epsilon,\theta-\delta]\}\le0\right] + \Pb\left[\sS_{\floor{n(\theta+\delta)}}> 0\right] \Big) \\
& \leq \limsup_{n\to\infty}
\Big(\Pb\left[\min\{\mathsf{S}_{\mathcal{P}_{ns}};\ s\in [\epsilon/2,\theta-\delta/2]\}\le0\right] +\Pb\left[ \mathcal{P}_{ n \cdot \epsilon/2} > \floor{n\epsilon} \right] \\
& + \Pb\left[ \mathcal{P}_{n \cdot (\theta-\delta/2)} < \lfloor n\cdot (\theta-\delta) \rfloor \right]
 + \Pb\left[\sS_{\mathcal{P}_{n(\theta+\delta/2)}}> 0\right] \\
 &+ \Pb\left[ \mathcal{P}_{n(\theta+\delta/2)} > \floor{ n\cdot(\theta+\delta) } \right] \Big) = 0.
\end{align*}
For the second inequality we have used that since $\tau_n= \min\{ t\in\mathds{N} \mid \mathsf{S}_t=0 \}$ we have $\tau_n\le t_0$ if and only if there is $t\le t_0$ with $\mathsf{S}_t=0$. To see the third inequality observe that $\mathcal{P}_{ns},\, s\in[ \epsilon/2, \theta-\delta/2]$ attains with probability one every integer from $\mathcal{P}_{n \epsilon/2}$ through $\mathcal{P}_{n \cdot(\theta-\delta/2)}$. The final limit relation holds, since $\frac{\tilde{\mathsf{S}}_{ns}}{n}=\frac{\mathsf{S}_{\mathcal{P}_{ns}}}{n}$ converges in probability to $\inp{\E[\L]}{\mathsf{w}(s)}-s$ uniformly on compacts of $\mathbb{R}_{+},$ 
and, moreover, $\inp{\E[\L]}{\mathsf{w}(s)}-s>0$ for $s\in (0, \theta)$ and $\inp{\E[\L]}{\mathsf{w}(s)}-s<0$ for any $s\in(\theta,\infty)$. This completes the proof of the first part.
 
\underline{Proof of $\frac{\tilde\tau_n}{n}\to\theta\ind{\mathsf{Surv^{MGW}}}$ in probability as $n\to\infty$.}\\
We write  
$\tilde\tau_n = \sum_{k=1}^{\tau_n} E_k$ for independent, exponentially distributed random variables $E_k\sim \mathsf{Exp}(1)$ with rate $1$ and set $\theta':=\theta  \ind{\mathsf{Surv^{MGW}}}$. 
Pick any $\epsilon>0$. We first split the event $\left[\left|\frac{\PoisTau_n}{n} -\theta'\right|>\epsilon\right]$ intersecting it with 
$A_n:=\left[\left|\frac{\tau_n}{n}-\theta'\right|\le\frac{\epsilon}{9}\right ]$ and its complement.
Note that on $A_n$, $\tilde\tau_n$ differs from $\sum_{k=1}^{\floor{n\theta'}}E_k$ by a random variable which is the sum of at most $\ceil{n\epsilon/9}$ exponentials of rate 1. Let $W_n$ be this variable (with its sign) so that 
 $\tilde\tau_n=\sum_{k=1}^{\floor{n\theta'}}E_k+W_n$.
\[
\mathbb{P}\left[\left|\frac{\PoisTau_n}{n} -\theta'\right|>\epsilon\right] \le \mathbb{P}\left[A_n^\complement\right ] + \mathbb{P}\left[ \left|\frac{1}{n} \sum_{k=1}^{\lfloor n\theta' \rfloor} \big(E_k-1 \big)+\frac{\floor{n\theta'}}n-\theta'+\frac{W_n}n\right|>\epsilon \right].
\]
We already know that the first term vanishes, so we concentrate on the second one, which we call $\mathbb P(B_n)$.
We split the event by intersecting with $C_n:=\left[\left|W_n/n\right|\le\frac{\epsilon}{3}\right ]$ and its complement. By the weak law of large numbers, $\mathbb P[C_n^\complement]\to0$,
since $|W_n|$ is a.s.\ positive and bounded by a sum of $n\epsilon/9$ terms with mean 1.
Now, use $\theta'-\floor{n\theta'}/n\in[0,1/n]$ to get
\[\begin{split}
\mathbb P[B_n\cap C_n]&=\mathbb P\left[\frac{1}{n} \sum_{k=1}^{\lfloor n\theta' \rfloor} \big(E_k-1 \big)>\epsilon -\frac{W_n}n,C_n\right]\\
&+\mathbb P\left[\frac{1}{n} \sum_{k=1}^{\lfloor n\theta' \rfloor} \big(E_k-1 \big)<-\epsilon+\frac{1}n -\frac{W_n}n,C_n\right].
\end{split}
\] 
For the first term,
\[
\mathbb P\left[\frac{1}{n} \sum_{k=1}^{\lfloor n\theta' \rfloor} \big(E_k-1 \big)>\epsilon -\frac{W_n}n,C_n\right]
\le \mathbb P\left[\frac{1}{n} \sum_{k=1}^{\lfloor n\theta' \rfloor} \big(E_k-1 \big)>\frac23\epsilon\right]\to 0,\quad \text{ as } n\to\infty
\] 
 since by the weak law of large numbers 
$ \frac{1}{n} \sum_{k=1}^{\lfloor n\theta'\rfloor} \big( E_k -1 \big) \to 0$.
For the second term
\[
\mathbb P\left[\frac{1}{n} \sum_{k=1}^{\lfloor n\theta' \rfloor} \big(E_k-1 \big)<-\epsilon+\frac1n -\frac{W_n}n,C_n\right]
\le \mathbb P\left[\frac{1}{n} \sum_{k=1}^{\lfloor n\theta' \rfloor} \big(E_k-1 \big)<-\frac23\epsilon+\frac1n\right]\to 0,
\] 
by the same argument.

\underline{Proof of $\frac{\PoisCN_{\PoisTau_n}}{n} \longrightarrow\inp{\mathsf{w}(\theta)}{\mathsf{1}} \ind{\mathsf{Surv^{MGW}}}$  in probability as $n\to\infty$.}\\
Since $\frac{\PoisN^i_{ns}}{n}\to \gamma_i (1-e^{-\beta_i s})=w_i(s)$ in probability, the law of large numbers together with a random time change (see e.g.\ \cite[p.\ 151]{Bill99}) and the first part of the proof yields
\begin{align*}
\frac{\PoisCN_{\PoisTau_n}}{n}&=\sum_{i=1}^J\underbrace{\dfrac{\PoisN^i_{\PoisTau_n}}{n}}_{\to \gamma_i(1-e^{-\beta_i \theta})\ind{\mathsf{Surv^{MGW}}} }\\
&\longrightarrow  \ind{\mathsf{Surv^{MGW}}} \sum_{i=1}^J \gamma_i(1-e^{-\beta_i \theta})=\inp{\mathsf{w}(\theta)}{\mathsf{1}}\ind{\mathsf{Surv^{MGW}}},
\end{align*}
in probability, as $n\to\infty$.  So this part of the proof is complete.

\underline{Proof of $\frac{\cN_{\tau_n}}{n} \longrightarrow\inp{\mathsf{w}(\theta)}{\mathsf{1}} \ind{\mathsf{Surv^{MGW}}}$  in probability as $n\to\infty$.}\\
Since $\tilde{\cN}_t=\cN_{\cP_t}$ for $t\in\mathds{N}$ and $\tau_n=\cP_{\PoisTau_n}$, we have $\tilde{\cN}_{\PoisTau_n}=\cN_{\tau_n}$ and the claim follows from the previous one.
\end{proof}

\section{Central Limit Theorems}\label{sec:clt}

In this section we prove central limit theorems for the duration $\tau_n$ of the process and for the total infected vertices at this time. We begin with a preliminary result.

\begin{theorem}\label{prop:Poisson_joint}
{Suppose that \textsf{Assumption 1} holds}. For $t\in [0,\infty)$, and $n\in \N$ denote by 
\begin{align*}
Z^{(i)}_{nt}&=\dfrac{1}{\sqrt{n}}\left(
\PoisN_{nt}^i-n\gamma_i \cdot \left(1-e^{-\beta_i t}\right) \right)\\
Z^{\cP}_{nt}&=\dfrac{1}{\sqrt{n}}\left(\mathcal{P}_{nt}-nt\right).
\end{align*}
Then it holds
$$ \left\{\left(Z^{(1)}_{ns}, \dots, Z^{(J)}_{ns}, Z^{\cP}_{ns})\right); s\in[0,\infty)\right\}  \rightarrow \left\{\left(\sqrt{\gamma_1}\mathsf{X}_s^{(1)}, \dots, \sqrt{\gamma_J} \mathsf{X}_s^{(J)}, \mathsf{B}^{\mathcal{P}}
(s)\right), s\in[0,\infty)\right\},$$
as $n\to\infty$ in law in the Skorokhod space $\mathcal{D}([0,\infty), \mathbb{R}^{J+1})$ endowed with the standard $\mathbf{J}_1$ topology, where $(\sX^{(1)}, \dots, \sX^{(J)},\sB^{\cP})$ is a centered Gaussian process with the first $J$ components being independent and with covariances given by
\begin{align*}
\Cov[\sX^{(i)}_s,\sX^{(i)}_t] &=(1-e^{-\beta_is})e^{-\beta_i t} && s,t\in [0,\infty), \ s\le t, \\
\Cov[\sX^{(i)}_s,\sB^{\cP}(t)
] &= \min\{s,t\} \cdot\gamma_i e^{-s\beta_i} &&s,t\in[0,\infty), \\
\Cov[\sB^{\cP}(s),\sB^{\cP}(t)]
&= \min\{s,t\} &&s,t\in[0,\infty).
\end{align*}
\end{theorem}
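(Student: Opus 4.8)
The plan is the classical two-step route to functional weak convergence: prove convergence of the finite-dimensional distributions, prove tightness of the $\mathbb{R}^{J+1}$-valued sequence in the product Skorohod $\mathbf{J}_1$ space, and combine the two. (One could also start from the joint functional CLT for $(Z^{(1)},\ldots,Z^{(J)})$ contained in Proposition~\ref{p:CLT_Nt-joint}, but since $Z^{\mathcal{P}}$ is correlated with \emph{all} of those coordinates it is cleanest to redo everything from scratch from the underlying clocks.) I would first set up a convenient representation: everything is a functional of the $n$ independent Poisson clocks $(\mathcal{P}_\cdot(k))_{k=1}^n$, where $\mathcal{P}_\cdot(k)$ has rate $p_{\type(v_k)}=\beta_{\type(v_k)}/n$. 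For a vertex $v_k$ of type $i$ put $\Phi_k(t):=\big(0,\ldots,\mathbf{1}_{\{E_k\le t\}},\ldots,0,\mathcal{P}_t(k)\big)\in\mathbb{R}^{J+1}$ with the indicator in the $i$-th slot, so that $(\PoisN^1_t,\ldots,\PoisN^J_t,\mathcal{P}_t)=\sum_{k=1}^n\Phi_k(t)$; the $\Phi_k$ are independent, identically distributed within each type, and — this is the key point — on the macroscopic time scale $t\mapsto nt$ the law of $\Phi_k(n\cdot)$ does not depend on $n$: a type-$i$ clock gives $\mathbf{1}_{\{E_k\le nt\}}\sim\mathrm{Bern}(1-e^{-\beta_it})$ and $\mathcal{P}_{n\cdot}(k)$ is a rate-$\beta_i$ Poisson process. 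Hence $(Z^{(1)}_{n\cdot},\ldots,Z^{(J)}_{n\cdot},Z^{\mathcal{P}}_{n\cdot})$ is $\tfrac{1}{\sqrt{n}}$ times a sum of $n$ independent centered $\mathbb{R}^{J+1}$-valued processes whose laws are fixed, only their number growing with $n$.

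\emph{Finite-dimensional distributions.} Fix $0\le t_1<\cdots<t_m$. Splitting $\sum_k$ by type, within type $i$ one has a sum of $n_i=\gamma_i n$ i.i.d.\ vectors in $\mathbb{R}^{(J+1)m}$ with finite moments of all orders, so the ordinary multivariate CLT gives $\tfrac{1}{\sqrt{n_i}}\sum_{k:\type(v_k)=i}(\text{centered }\Phi_k)\Rightarrow\mathcal{N}(0,\Sigma_i)$, whence $\tfrac{1}{\sqrt{n}}\sum_{k:\type(v_k)=i}(\cdots)=\sqrt{\gamma_i}\,\tfrac{1}{\sqrt{n_i}}\sum(\cdots)\Rightarrow\mathcal{N}(0,\gamma_i\Sigma_i)$; summing over the types, which involve disjoint vertex sets and are therefore independent, yields a centered Gaussian limit with covariance $\sum_i\gamma_i\Sigma_i$. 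The entries are computed directly: the $i$-$i$ block reproduces $\Cov[Z^{(i)}_{ns},Z^{(i)}_{nt}]=\gamma_i(1-e^{-\beta_is})e^{-\beta_it}$ for $s\le t$ (that is, Proposition~\ref{prop:clt-n_i} scaled by $\gamma_i$), distinct type blocks vanish, the $\mathcal{P}$-$\mathcal{P}$ block gives $\Cov[Z^{\mathcal{P}}_{ns},Z^{\mathcal{P}}_{nt}]=\min\{s,t\}$ (rate-$1$ Poisson process), and the cross term follows from independence across vertices together with the elementary identity $\Cov[\mathbf{1}_{\{N_u\ge1\}},N_v]=\lambda\min\{u,v\}e^{-\lambda u}$ for a rate-$\lambda$ Poisson process $N$, which after the rescaling produces the stated $\Cov[\mathsf{X}^{(i)}_s,\mathsf{B}^{\mathcal{P}}_t]$. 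This identifies the fidi limit as that of the claimed Gaussian process.

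\emph{Tightness and conclusion.} Each coordinate sequence converges in $\mathcal{D}([0,\infty),\mathbb{R})$ to a continuous process — $Z^{(i)}$ to $\sqrt{\gamma_i}\mathsf{X}^{(i)}$ by Proposition~\ref{prop:clt-n_i} (using $\tfrac{1}{\sqrt{n}}=\sqrt{\gamma_i}/\sqrt{n_i}$ and $n\gamma_i=n_i$), and $Z^{\mathcal{P}}$ to standard Brownian motion by the classical functional CLT for the rate-$1$ Poisson process (Donsker applied to its interarrival times) — and therefore each coordinate sequence is C-tight (tight, with every limit point concentrated on continuous paths). Since componentwise C-tightness of an $\mathbb{R}^{J+1}$-valued sequence implies C-tightness of the vector-valued sequence in the product Skorohod space (standard, as C-tightness is characterised by a uniform modulus-of-continuity condition that passes to finite families; see e.g.\ \cite[Corollary~VI.3.33]{jacod-shiryaev}), the sequence $(Z^{(1)},\ldots,Z^{(J)},Z^{\mathcal{P}})$ is tight with all limit points continuous. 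Together with the fidi convergence above, every subsequential limit has the prescribed law, which is the assertion; convergence on $[0,\infty)$ reduces to convergence on each $[0,T]$.

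\emph{Main obstacle.} The only genuinely delicate point is the passage from componentwise tightness to joint tightness in the $\mathbf{J}_1$ topology on $\mathcal{D}([0,\infty),\mathbb{R}^{J+1})$: this is false for ordinary tightness and works here precisely because all the limiting coordinates are continuous, so one must argue via C-tightness rather than tightness. Everything else is routine — the $n$-independence of the per-vertex laws removes any need for a Lindeberg-type estimate, and the covariance computation, though the only step with real content, is elementary once the processes are written in terms of the underlying independent clocks.
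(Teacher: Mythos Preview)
Your proposal is correct and follows essentially the same route as the paper: finite-dimensional convergence obtained from the underlying independent per-vertex Poisson clocks together with the explicit covariance computation, and tightness deduced from componentwise convergence. Your treatment is in fact somewhat more careful than the paper's on two points: you justify the \emph{joint} Gaussianity of the fidi limit via a genuine multivariate CLT on the per-vertex summands (the paper only records the marginal limits and the cross-covariances, leaving the joint Gaussianity implicit), and you correctly observe that passing from componentwise tightness to tightness in $\mathcal{D}([0,\infty),\mathbb{R}^{J+1})$ requires C-tightness (the paper's product-of-compacts argument literally yields tightness only in $\mathcal{D}([0,\infty),\mathbb{R})^{J+1}$, which suffices here precisely because the limit is continuous, as you note).
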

\begin{proof}
The limit of $\left\{ Z_{ns}^{(i)}; s \in[0,\infty)\right\}$ is given in Proposition \ref{prop:clt-n_i}. We next investigate the limit of $\left\{ Z_{ns}^{\cP}; s \in[0,\infty)\right\}$. Since $\cP_t$ is a Poisson process with rate $1$, there are independent random variables $E_j \sim Exp(1), \, j\in\mathds{N}$ with 
$$\left\{\cP_t; t\in[0,\infty)\right\} = \left\{\sum_{k=1}^\infty \mathds{1}_{\{\sum_{j=1}^k E_j \le t\}};\  t\in[0,\infty)\right\}.$$ 
By using Donsker's invariance principle we first have that
\[ \left\{ \frac{1}{\sqrt{n}} \sum_{j=1}^{\lfloor ns \rfloor} (E_j-1); s\in[0,\infty)\right\}\stackrel{\law}{\longrightarrow} \left\{ \sB(s);\ s\in[0,\infty)\right\},\quad \text{as } n\to\infty\]
in the Skorokhod topology.
We then obtain
$$\left\{Z_{ns}^{\cP}; s\in[0,\infty)\right\} \stackrel{\law}{\longrightarrow} \left\{\sB(s);\  s\in[0,\infty)\right\}$$
in view of \cite[Theorem 14.6]{Bill99}, and we set $\sB^{\cP}(s):=\sB(s)$. 
Now let us turn to $\Cov[\PoisN_{ns}^i, \cP_{nt}]$. We assume w.l.o.g.\ that the vertices $v_1, \dots, v_{n_i}$ are of type $i$, while the vertices $v_{n_i+1},\dots, v_n$ are not of type $i$.  The time at which vertex $v_k$, $k=1,\dots,n$ receives the virus for the $l$-th time can be modeled by 
$t_{k,l}=\sum_{j=1}^l E_{k,j}$, where again $E_{k,j}$, $k=1,\dots, n$, $j\in\mathds{N}$, are independent exponentially distributed random variables with  
rate $p_{\type(v_k)}$. Then
$$ \left\{\PoisN_t^i; t\in[0,\infty)\right\}\stackrel{\law}{\sim}\left\{\sum_{k=1}^{n_i} \ind{t_{k,1}\le t};\  t\in[0,\infty)\right\} $$
and
$$ \left\{\cP_t; t\in[0,\infty)\right\}\stackrel{\law}{\sim} \left\{\sum_{k=1}^n \sum_{l=1}^\infty \ind{t_{k,l}\le t};\  t\in[0,\infty)\right\}.$$
Observe that 
$\cP_t(k):= \sum_{l=1}^\infty \ind{t_{k,l}\le t}\sim 
\mathsf{Poi}(tp_{\type(v_k)})$  and $\ind{t_{k,1}\le t}= \ind{\cP_t(k)\ge 1}$. For $s\le t$:
\begin{align*}
\frac{1}{n}& \Cov[\PoisN_{ns}^i,\cP_{nt}] = \frac{1}{n} \sum_{k=1}^{n_i}\left(\Cov\left[ \ind{t_{k,1}\le ns},\cP_{ns}(k)\right] + \Cov\left[\ind{t_{k,1}\le ns}, \cP_{nt}(k)-\cP_{ns}(k)\right]\right)  \\
&= \frac{1}{n}\sum_{k=1}^{n_i} \left(\E[ \cP_{ns}(k)] - \E[\cP_{ns}(k)] \cdot \E[\ind{\cP_{ns}(k)\ge 1}]\right) 
= \frac{1}{n} \sum_{k=1}^{n_i} s\beta_{i}\cdot e^{-s\beta_{i}}=s\cdot\gamma_ie^{-s\beta_i}.
\end{align*}
If $s\ge t$ we get
\begin{align*}
\frac{1}{n} &\Cov[\PoisN_{ns}^i,\cP_{nt}] = \frac{1}{n} \sum_{k=1}^{n_i}\Cov\left[ \ind{t_{k,1}\le ns},\cP_{nt}(k)\right]\\
&= \frac{1}{n}\sum_{k=1}^{n_i} \left(\E[ \cP_{nt}(k)] - \E[\cP_{nt}(k)] \cdot \E[\ind{t_{k,1}\le ns}]\right) 
=\frac{1}{n} \sum_{k=1}^{n_i} t\beta_{i}\cdot e^{-s\beta_{i}}=t \gamma_ie^{-s\beta_i}.
\end{align*}
The covariances $\Cov[\sX^{(i)}_s,\sX^{(i)}_t]$ have been computed in Proposition \ref{prop:clt-n_i}. Hence the finite-dimensional convergence of the processes of $\{Z^{(i)}_{ns}; s\in[0,\infty)\}$ to $\{\sqrt{\gamma_i}\sX^{(i)}_s;\ s\in[0,\infty)\}$ for all $i\in\floor{J}$ and of $\{Z^{\cP}_{ns};s\in[0,\infty)\}$ to $\{\sB^{\cP}
(s);\ s\in[0,\infty)\}$ imply the finite-dimensional convergence of $ \left\{\left(Z^{(1)}_{ns},\dots, Z^{(J)}_{ns}, Z^{\cP}_{ns})\right); s\in[0,\infty)\right\} $ to $ \left\{\left(\sqrt{\gamma_1}\mathsf{X}_s^{(1)}, \dots, \sqrt{\gamma_J}\mathsf{X}_s^{(J)}, \mathsf{B}^{\mathcal{P}}
(s)\right), s\in[0,\infty)\right\}$. In order to prove tightness of the involved processes, let $\epsilon>0$. Then there are sets $\mathcal{K}_1,\dots, \mathcal{K}_{J+1} \subseteq \mathcal{D}([0,\infty), \mathbb{R})$ which are compact w.r.t.\ the Skorokhod topology such that
\begin{align*}
\mathbb{P}\left[ (s\mapsto Z^{(i)}_{ns}) \in \mathcal{K}_i \text{ for all } n\in\mathds{N}\right]\ge 1-\epsilon/(2J)\\
\mathbb{P}\left[ (s\mapsto Z^{\cP}_{ns}) \in \mathcal{K}_{J+1} \text{ for all } n\in\mathds{N}\right] \ge 1-\epsilon/2.
\end{align*}
Hence
\[ \mathbb{P}\left[ (s\mapsto (Z^{(1)}_{ns},\dots, Z^{(J)}_{ns}, Z^{\cP}_{ns})) \in \mathcal{K}_1 \times \dots \times \mathcal{K}_{J+1} \text{ for all }n\in\mathds{N}\right] \ge 1-\epsilon \]
and this implies tightness, so the claim is proved.
\end{proof}

Let $r_i(n):=\sum_{l=1}^n\L^i_l$  for $i\in [J]$ and $n\in\mathds{N}$.
\begin{lemma}\label{l:asy_independence}
Denote for $i\in[J]$
\begin{equation*}
Z_{i}^L(nq)= n^{-1/2} \left(r_i(nq)-nq\E[\L^i]\right) \quad \text{for }\ q\geq 0
\end{equation*}
and
\[Z_{i}^N(ns) = n^{-1/2}\left(\PoisN_{ns}^i-n\gamma_i(1-e^{-\beta_i s}) \right) \quad \text{for } s\geq 0.\]
If
\begin{equation}\label{e:Z_converge}
 \begin{pmatrix} \left\{ Z_{i}^L(nq);\ q \geq 0\right\} \\ \left\{ Z_{i}^N(ns);\ s \geq 0\right\} \end{pmatrix} \stackrel{\law}{\longrightarrow}  \begin{pmatrix} \left\{ \mathsf{Y}_{i}^L(q);\  q\ge 0 \right\}\\ \left\{ \mathsf{Y}_{i}^N(s);\  s\ge 0 \right\} \end{pmatrix} , \quad \text{as }n \to\infty
\end{equation}
in the product topology of the Skorokhod $\mathbf{J}_1$ topology under $\mathbb{P}$ for some stochastic process $(\{ \mathsf{Y}_{i}^L(q);\  q\ge 0 \}, \{ \mathsf{Y}_{i}^N(s);\  s\ge 0 \})$, then the same holds also under $\mathbb{P}\left[ \cdot \mid \mathsf{Surv}^{\mathsf{MGW}}\right]$ and under $\mathbb{P}\left[ \cdot \mid \left(\mathsf{Surv}^{\mathsf{MGW}}\right)^{\mathsf{C}}\right]$.
\end{lemma}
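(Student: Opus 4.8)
The plan is to transfer the weak convergence from $\mathbb{P}$ to the two conditional laws by a converging-together argument, exploiting that on the extinction event the Galton-Watson tree is finite, so that there the pair $(Z^L_n,Z^N_n)$ is, up to a vanishing error, a functional of fresh randomness independent of $\mathsf{Surv}^{\mathsf{MGW}}$. First I would reduce the problem: whenever both conditional measures are meaningful one has $\mathbb{P}[\mathsf{Surv}^{\mathsf{MGW}}]\in(0,1)$ --- positivity by \textsf{Assumption 2}, positivity of the complement because the degenerate case $\L^1+\dots+\L^J=1$ a.s.\ has been excluded (cf.\ the remark after \textsf{Assumption 2}) --- the borderline cases $\sigma^{\mathsf{MGW}}\in\{0,1\}$ being trivial or vacuous. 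For bounded continuous $F$ on the product Skorohod space one has $\mathbb{E}[F\,\ind{\mathsf{Surv}^{\mathsf{MGW}}}]=\mathbb{E}[F]-\mathbb{E}[F\,\ind{(\mathsf{Surv}^{\mathsf{MGW}})^{\mathsf{C}}}]$, so it suffices to prove the convergence under $\mathbb{P}[\,\cdot\mid(\mathsf{Surv}^{\mathsf{MGW}})^{\mathsf{C}}]$; the statement under $\mathbb{P}[\,\cdot\mid\mathsf{Surv}^{\mathsf{MGW}}]$ then follows by subtraction and division by the two probabilities. I would also record, using Proposition \ref{prop:var-li} and the Poissonization construction, that $Z^L_n$ is a measurable functional of the offspring sequences $(\L^i_l)_{l,i}$ while $Z^N_n$ is a functional of the discrete coupon-collector process $(\sN_t)_{t\in\N}$ together with the Poisson clock $(\mathcal{P}_t)$; since these two blocks of randomness are independent, $Z^L_n\perp Z^N_n$ under $\mathbb{P}$, and consequently the limit $(\mathsf{Y}^L,\mathsf{Y}^N)$ equals the product of its two marginals.

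On $(\mathsf{Surv}^{\mathsf{MGW}})^{\mathsf{C}}$ the tree $\T^{\mathsf{MGW}}$ is a.s.\ finite; set $\nu:=|V(\T^{\mathsf{MGW}})|$. In the construction of Section \ref{sec:inf-mgw}, each step of the infection moves exactly one vertex of $\T^{\mathsf{MGW}}$ into one of the disjoint sets $\cT^\circ$, $\cT^\dagger$, so $\tau_n\le\nu$ and hence $|\cT^\circ_i(\tau_n)|\le\nu$ for all $i$ and all $n$. By \eqref{eq:rv-li}, $\L^i_l=\bar{\L}^i_l$ for every $l>|\cT^\circ_i(\tau_n)|$, where the auxiliary family $(\bar{\L}^i_l)$ is, by its very introduction, independent of the tree and of the coupon collector. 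With $\widetilde Z^L_i(nq):=n^{-1/2}\big(\sum_{l=1}^{\lfloor nq\rfloor}\bar{\L}^i_l-nq\,\E[\L^i]\big)$, the difference $Z^L_i(nq)-\widetilde Z^L_i(nq)$ does not depend on $q$ and is bounded in modulus by $n^{-1/2}\big((\nu-1)+\sum_{l=1}^{\nu}\bar{\L}^i_l\big)$, which tends to $0$ a.s.\ on $(\mathsf{Surv}^{\mathsf{MGW}})^{\mathsf{C}}$ (finiteness of $\E[\L^i]$, implicit in the very definition of $Z^L_i$, is used here). Hence $\sup_q|Z^L_i(nq)-\widetilde Z^L_i(nq)|\to0$ a.s.\ on that event.

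For $Z^N_n$ I would argue analogously. Beyond time $\tau_n$ the complete coupon collector draws types with probabilities $\alpha_1,\dots,\alpha_J$ from fresh randomness that is independent of $\T^{\mathsf{MGW}}$; I would couple $(\sN^i_t)$ with a genuine coupon-collector process $(\sN^{\prime i}_t)$ on $\mathbb{K}_n$ that uses the same draws at all steps $t>\tau_n$ and fresh, tree-independent draws at the steps $t\le\tau_n$. Replacing the first $\tau_n$ draws alters the set of collected vertices by at most $2\tau_n$, so $|\sN^i_t-\sN^{\prime i}_t|\le2\nu$ for all $t$ on $(\mathsf{Surv}^{\mathsf{MGW}})^{\mathsf{C}}$; putting $Z^{\prime N}_i(ns):=n^{-1/2}\big(\sN^{\prime i}_{\mathcal{P}_{ns}}-n\gamma_i(1-e^{-\beta_i s})\big)$ this yields $\sup_s|Z^N_i(ns)-Z^{\prime N}_i(ns)|\le2\nu/\sqrt n\to0$ a.s.\ on that event. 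Moreover the draws of $(\sN^{\prime i})$ and the clock $(\mathcal{P}_t)$ are independent of $\T^{\mathsf{MGW}}$, so $Z'_n:=(Z^{\prime N}_i(n\cdot))_i$ is independent of $\mathsf{Surv}^{\mathsf{MGW}}$ and $Z'_n\dequal Z^N_n$.

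To conclude: $\widetilde Z^L_n$ and $Z'_n$ are independent of $\mathsf{Surv}^{\mathsf{MGW}}$ and of each other, so under $\mathbb{P}[\,\cdot\mid(\mathsf{Surv}^{\mathsf{MGW}})^{\mathsf{C}}]$ the pair $(\widetilde Z^L_n,Z'_n)$ has the same law as under $\mathbb{P}$; by Proposition \ref{prop:var-li} ($\widetilde Z^L_n\dequal Z^L_n$ and $Z'_n\dequal Z^N_n$) this law is the product of the laws of $Z^L_n$ and $Z^N_n$, which by the first step ($Z^L_n\perp Z^N_n$) is exactly the law of $(Z^L_n,Z^N_n)$ under $\mathbb{P}$, and this converges weakly to $(\mathsf{Y}^L,\mathsf{Y}^N)$ by hypothesis. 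Thus $(\widetilde Z^L_n,Z'_n)\to(\mathsf{Y}^L,\mathsf{Y}^N)$ under $\mathbb{P}[\,\cdot\mid(\mathsf{Surv}^{\mathsf{MGW}})^{\mathsf{C}}]$; since the $\mathbf{J}_1$-distance between $(Z^L_n,Z^N_n)$ and $(\widetilde Z^L_n,Z'_n)$ tends to $0$ almost surely under that measure (by the two preceding steps), the converging-together lemma (e.g.\ \cite[Theorem 3.1]{Bill99}) gives $(Z^L_n,Z^N_n)\to(\mathsf{Y}^L,\mathsf{Y}^N)$ under $\mathbb{P}[\,\cdot\mid(\mathsf{Surv}^{\mathsf{MGW}})^{\mathsf{C}}]$, and by the reduction step also under $\mathbb{P}[\,\cdot\mid\mathsf{Surv}^{\mathsf{MGW}}]$. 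The delicate point throughout is the independence bookkeeping in the first, third and last steps --- showing that the auxiliary offspring sequences and the pure coupon collector can indeed be realized independently of $\mathsf{Surv}^{\mathsf{MGW}}$ --- which rests on the construction of Section \ref{sec:inf-mgw} and on Proposition \ref{prop:var-li}.
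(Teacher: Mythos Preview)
Your argument follows the same overall strategy as the paper's: establish that $(Z^L_n,Z^N_n)$ is asymptotically independent of the extinction event by approximating both components, on extinction, by processes built from randomness independent of the Galton--Watson tree, and then deduce the conditional convergence on survival by subtraction. The $Z^L$ part---replacing $\L^i_l$ by $\bar\L^i_l$ and bounding the finitely many discrepancies via the finite tree size---is exactly the paper's argument. For $Z^N$ the paper instead writes $\PoisN^i_t$ via the per-vertex Poisson arrival times $Y^{(j)}_l$ and truncates at a fixed level $A$ to get a process $\PoisN^{A,i}_t$ that is independent of $\{\sZ^{\mathsf{MGW}}_{\mathsf{tot}}\le A\}$, whereas you couple with a fresh coupon collector $(\sN'^i_t)$ differing only in the first $\tau_n$ draws. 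Both reach the same conclusion; your version is arguably more transparent about why the error is $O(\nu/\sqrt n)$.

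Two small points worth tightening. First, the claim that $Z^L_i(nq)-\widetilde Z^L_i(nq)$ ``does not depend on $q$'' is only literally true once $\lfloor nq\rfloor>\nu$; for smaller $q$ the difference still varies, though your uniform bound $n^{-1/2}\big((\nu-1)+\sum_{l\le\nu}\bar\L^i_l\big)$ holds regardless, so the conclusion is unaffected. Second, the independence of $(\sN'^i_t)$ from $\mathsf{Surv}^{\mathsf{MGW}}$ is not immediate as written, because your definition switches sources at the tree-dependent random time $\tau_n$. The cleanest fix is to realize the post-$\tau_n$ fresh types as the tail of a full i.i.d.\ sequence $(\Lambda_t)_{t\ge1}$ taken independent of the tree from the outset, and let $(\sN'^i_t)$ use $(\Lambda_t)$ for \emph{all} $t$; then $(\sN'^i_t)$ is manifestly tree-independent, has the pure coupon-collector law, and differs from $(\sN^i_t)$ only in the first $\tau_n$ draws, giving your bound $|\sN^i_t-\sN'^i_t|\le 2\nu$ on extinction. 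With this clarification your independence bookkeeping goes through.
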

 
\begin{proof}Recall first the survival event 
$\mathsf{Surv^{MGW}}=\left\{\sum_{i=1}^J\sZ^i_t > 0,\ \forall t\in \mathds{N}\right\}$,
and  $\sZ_{\mathsf{tot}}^{\mathsf{MGW}}=\sum_{t=0}^{\infty}\sum_{i=1}^J\sZ^i_t$ represents the total number of individuals of the MGW $(\sZ_t)_{t\in \N}$.
From \eqref{eq:rv-li}  we know that, for the event $\{\sZ_{\mathsf{tot}}^{\mathsf{MGW}}\leq A\}$ for $A\geq 0$, the variables $\L_l^i$ in the definitions of $r_i(t)$ coincide with $\bar{\L}_l^i$ for $l>A$, so that conditionally on $\{\sZ_{\mathsf{tot}}^{\mathsf{MGW}}\leq A\}$ for $i=1,\ldots,J$ the processes 
$$\sum_{l=A}^{\lfloor nq \rfloor}\bar{\L}^i_l=r_i(nq)-\sum_{l=1}^A\L^i_l$$
are independent of the $\mathsf{MGW}$, and also independent among them. Moreover, for each vertex $j=1,\dots, n_i$ of type $i$ let $Y^{(j)}_l$, $l\in\mathds{N}$, denote the arrival times of the infection attempts of vertex $i$. Then 
\[ \PoisN_{t}^i= \sum_{j=1}^{n_i} \max_{l\in\mathds{N}} \mathds{1}_{\left\{Y_l^{(j)} \le t\right\}}. \]
Now the process
\[ \PoisN_{t}^{A,i} = \sum_{j=1}^{n_i} \max_{l\in\mathds{N}} \mathds{1}_{\left\{A<Y_l^{(j)} \le t\right\}} \]
is independent of $\{\sZ_{\mathsf{tot}}^{\mathsf{MGW}}\leq A\}$ and 
\[ \sup_{t\in[0,\infty)} \left|\PoisN_{t}^{A,i} - \PoisN_{t}^i\right| \le \sum_{j=1}^{n_i} \max_{l\in\mathds{N}} \mathds{1}_{\{Y_l^{(j)} \le A\}} \le \sum_{j=1}^{n_i} \sum_{l=1}^\infty \mathds{1}_{\{Y_l^{(j)} \le A\}} \]
and the right-hand side is a Poisson distributed random variable whose parameter does not depend on $n$. 
We have then that the event $\{\sZ_{\mathsf{tot}}^{\mathsf{MGW}}\leq A\}$ and the process\\
 $\left((Z^{L}_i(nq))_{q\geq 0}, (Z^{N}_i(ns))_{s\geq 0}\right)$ are asymptotically independent, thus also $\left(\mathsf{Surv^{MGW}}\right)^{\complement}$ and $\left((Z^{L}_i(nq))_{q\geq 0}, (Z^{N}_i(ns))_{s\geq 0}\right)$ are asymptotically independent. Therefore 
\[ \begin{pmatrix} \left\{n^{-1/2}\left(r_i(nq)-nq\E[\L^i]\right); {q\geq 0} \right\}   \\  \left\{ n^{-1/2}\left(\PoisN_{ns}^i-n\gamma_i(1-e^{-\beta_i s}) \right); s\geq 0 \right\}  \end{pmatrix} \]
 has the same limit in law under $\Pb$, under $\mathbb{P}\left[ \cdot \mid \left(\mathsf{Surv}^{\mathsf{MGW}}\right)^{\complement}\right]$ and hence also under $\mathbb{P}\left[ \cdot \mid \mathsf{Surv}^{\mathsf{MGW}}\right]$.
\end{proof}

If $\sigma_i^2:=\Var \left[\mathcal{L}^i\right]<\infty$ for all $i=1,\dots,J$, we define the following constants: 
\begin{equation}\label{eq:var-tau-til}
\sigma_{\PoisTau}^2=\dfrac{\sum_{i=1}^J\sigma_i^2\gamma_i(1-e^{-\beta_i \theta})+\sum_{i=1}^J \E[\L^i]^2\sigma_{\cN,i}^2+\theta-2\theta \sum_{i=1}^J\gamma_i^{3/2} e^{-\beta_i\theta}\E[\L^i]}{(1-\sum_{i=1}^J \beta_i\gamma_i \mathds{E}[\L^i]e^{-\beta_i \theta})^2}
\end{equation}
with $\sigma_{\cN,i}^2=\gamma_i(1-e^{-\beta_i\theta})e^{-\beta_i \theta} $,
\begin{align*}
\sigma_{\tau}^2&= \dfrac{1}{(1-\sum_{i=1}^J \beta_i\gamma_i \mathbb{E}[\L^i]e^{-\beta_i \theta})^2}\Bigg[\sum_{i=1}^J\sigma_i^2\gamma_i(1-e^{-\beta_i \theta})+\sum_{i=1}^J \E[\L^i]^2\sigma_{\cN,i}^2\\
&+\theta\cdot \left(\sum_{i=1}^J \beta_i\gamma_i \mathbb{E}[\L^i]e^{-\beta_i \theta}\right)^2 - 2\theta \left(\sum_{i=1}^J\gamma_i^{3/2} e^{-\beta_i\theta}\E[\L^i]
\right)
 \left(\sum_{i=1}^J \beta_i\gamma_i \mathds{E}[\L^i]e^{-\beta_i \theta}\right)\Bigg]
\numberthis \label{eq:var-tau}
\end{align*}
and
\begin{equation}\label{eq:var-ntau}
\sigma_w^2=c_w^2\sum_{i=1}^J\sigma_i^2\gamma_i(1-e^{-\beta_i \theta})+\sum_{i=1}^J(1+c_w\E[\L^i])^2\sigma_{\cN,i}^2-2c_w\theta\Bigg(\sum_{i=1}^J(1+c_w\E[\L^i]) \gamma_i^{3/2}e^{-\beta_i\theta}\Bigg)+c_w^2\theta
\end{equation}
with 
\begin{equation} c_w:=\frac{
{\sum_{i=1}^J \beta_i\gamma_i e^{-\beta_i \theta}}}{1-\sum_{i=1}^J \beta_i\gamma_i \mathbb{E}[\L^i]e^{-\beta_i \theta}}, \label{eq:cw_def} \end{equation}

\begin{theorem}\label{thm:clt}\textbf{Central Limit Theorems for $\tau_n,\tilde\tau_n,$ and $\tilde{\cN}_{\PoisTau_n}$.}\\
Suppose 
{\textsf{Assumption 1} and \textsf{Assumption 2} hold}, and $\sigma_i^2:=\Var \left[\mathcal{L}^i\right]<\infty$ for all $i=1,\dots,J$, i.e. all entries of the offspring distribution matrix $L$ of the $\mathsf{MGW}$ have finite variances.  Then we have
\begin{align*}
& n^{-1/2}(\PoisTau_n - n\theta) \stackrel{\law}{\longrightarrow} \mathcal{N}(0, \sigma_{\PoisTau}^2)\quad &&\text{as}\quad n\to\infty\\
&n^{-1/2}(\tau_n - n\theta) \stackrel{\law}{\longrightarrow} \mathcal{N}(0,\sigma_{\tau}^2),\quad &&\text{as}\quad n\to\infty\\
& n^{-1/2}(\tilde{\cN}_{\PoisTau_n}-nw) \stackrel{\law}{\longrightarrow} \mathcal{N}(0, \sigma_w^2), \quad && \text{as}\quad n\to\infty
\end{align*}
conditionally on $\mathsf{Surv}^{\mathsf{MGW}}$, i.e.\ under the probability measure $\mathbb{P}\left[ \cdot | \mathsf{Surv}^{\mathsf{MGW}}\right]$, where \\ $w:=\inp{\mathsf{w}(\theta)}{\mathsf{1}}=\sum_{i=1}^J \gamma_i(1-e^{-\beta_i \theta}) $
and $\sigma_{\PoisTau}^2,\sigma_{\tau}^2,\sigma_{w}^2$ are given by \eqref{eq:var-tau-til}, \eqref{eq:var-tau}, \eqref{eq:var-ntau} respectively. 
\end{theorem}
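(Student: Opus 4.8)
\emph{Overall strategy.} The plan is to deduce all three central limit theorems from a single functional statement about the available-capacity process $(\PoisS_t)$, together with the delta method applied at the first-passage time $\PoisTau_n=\inf\{t:\PoisS_t=0\}$; recall $\PoisS_{\PoisTau_n}=\sS_{\cP_{\PoisTau_n}}=\sS_{\tau_n}=0$, $\tau_n=\cP_{\PoisTau_n}$ and $\tilde{\cN}_{\PoisTau_n}=\PoisCN_{\PoisTau_n}$. Every statement is to be proved under $\Prob[\,\cdot\mid\mathsf{Surv}^{\mathsf{MGW}}]$; by Lemma \ref{l:asy_independence} any convergence in law that holds under $\Prob$ for the processes $(Z^L_i(nq))_q$, $(Z^N_i(ns))_s$ --- and, by the obvious multivariate extension, jointly with $(Z^{\cP}_{ns})_s$ --- holds verbatim under this conditional measure, so the computations below may be carried out as if the $\L^i_l$ were plain i.i.d.\ copies of $\L^i$. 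By Theorem \ref{t:LLN} we also have $\tau_n/n,\ \PoisTau_n/n\to\theta$ and $\tilde{\cN}_{\PoisTau_n}/n\to w:=\inp{\mathsf{w}(\theta)}{\mathsf{1}}$ in probability there, which localises everything near $s=\theta$.

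\emph{Step 1: a CLT for $n^{-1/2}\PoisS_{n\theta}$.} Using $\PoisS_t=\sum_{i=1}^J r_i(\PoisN^i_t)-\cP_t$ (with $r_i(m)=\sum_{l=1}^m\L^i_l$) and the identity $\sum_{i=1}^J w_i(\theta)\E[\L^i]=\inp{\E[\L]}{\mathsf{w}(\theta)}=\theta$, which is precisely the definition of $\theta$ in \eqref{eq:ftheta}, I would write
\[
\PoisS_{n\theta}=\underbrace{\sum_{i=1}^J\bigl(r_i(\PoisN^i_{n\theta})-\PoisN^i_{n\theta}\,\E[\L^i]\bigr)}_{=:A_n}+\sum_{i=1}^J\E[\L^i]\bigl(\PoisN^i_{n\theta}-nw_i(\theta)\bigr)-\bigl(\cP_{n\theta}-n\theta\bigr).
\]
After dividing by $\sqrt n$, $A_n$ converges to a sum of independent centred Gaussians with variances $w_i(\theta)\sigma_i^2$: this is the ordinary CLT for the i.i.d.\ sequences $(\L^i_l)_l$ (finite variances $\sigma_i^2$) composed with a random time change, since $\PoisN^i_{n\theta}/n\to w_i(\theta)$ in probability by Proposition \ref{prop:clt-n_i} and, by Proposition \ref{prop:var-li}, the $(\L^i_l)$ are independent of each other and of the acquisition times, hence of the $\PoisN^j$. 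The last two terms, over $\sqrt n$, equal $\sum_i\E[\L^i]Z^{(i)}_{n\theta}-Z^{\cP}_{n\theta}$ and, by Theorem \ref{prop:Poisson_joint}, converge jointly --- and, by the independence just noted, jointly with $n^{-1/2}A_n$ --- to $\sum_i\E[\L^i]\sqrt{\gamma_i}\,\sX^{(i)}_\theta-\sB^{\cP}_\theta$. Reading off $\Var[\sX^{(i)}_\theta]$, $\Var[\sB^{\cP}_\theta]$ and $\Cov[\sX^{(i)}_\theta,\sB^{\cP}_\theta]$ from that theorem and adding the contribution of $A_n$ gives $n^{-1/2}\PoisS_{n\theta}\xrightarrow{\mathcal D}\mathcal N(0,D^2\sigma_{\PoisTau}^2)$ with $D:=1-\sum_i\beta_i\gamma_i\E[\L^i]e^{-\beta_i\theta}$.

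\emph{Step 2: passing to $\PoisTau_n$, $\tau_n$ and $\tilde{\cN}_{\PoisTau_n}$.} Here $D=-f'(\theta)>0$, since $f'(0)=\rho(M)-1>0$, $f''<0$ and $\theta$ is the positive root of $f$. A functional refinement of Step 1 (via Corollary \ref{c:FCLT_N}, Proposition \ref{prop:clt-n_i} and Donsker's theorem for $\cP$) shows that on $[\theta-\epsilon,\theta+\epsilon]$ one has $n^{-1}\PoisS_{ns}=f(s)+O_{\Prob}(n^{-1/2})$ uniformly, with $f$ strictly decreasing through $0$ at $\theta$; since $\PoisS$ decreases only in unit steps and $\PoisTau_n/n\to\theta$, the event $\{\PoisTau_n\le n\theta+x\sqrt n\}$ agrees, up to negligible probability, with $\{\PoisS_{n\theta+x\sqrt n}\le 0\}$, and a Taylor expansion $\PoisS_{n\theta+x\sqrt n}\approx\sqrt n\,(f'(\theta)x+n^{-1/2}\PoisS_{n\theta})$ then yields $n^{-1/2}(\PoisTau_n-n\theta)\xrightarrow{\mathcal D}\mathcal N(0,\sigma_{\PoisTau}^2)$, i.e.\ \eqref{eq:var-tau-til}. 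For $\tau_n$ I would split $n^{-1/2}(\tau_n-n\theta)=n^{-1/2}(\cP_{\PoisTau_n}-\PoisTau_n)+n^{-1/2}(\PoisTau_n-n\theta)$; by Donsker for $\cP$ and a random time change at $\theta$ the first term converges to $\sB^{\cP}_\theta$ jointly with the second, whose limit is the explicit Gaussian of Step 1, and the sum of these correlated Gaussians is $\mathcal N(0,\sigma_\tau^2)$, i.e.\ \eqref{eq:var-tau}. (Equivalently, since the Poisson clock $(E_k)$ is independent of the discrete-time chain, hence of $\tau_n$, one has $\PoisTau_n=\sum_{k=1}^{\tau_n}E_k$ with $n^{-1/2}\sum_{k\le\tau_n}(E_k-1)\to\mathcal N(0,\theta)$ asymptotically independent of $n^{-1/2}(\tau_n-n\theta)$, so that $\sigma_{\PoisTau}^2=\sigma_\tau^2+\theta$, which serves as a consistency check.) Finally, for $\tilde{\cN}_{\PoisTau_n}=\PoisCN_{\PoisTau_n}$ I would write $\PoisCN_{\PoisTau_n}-nw=(\PoisCN_{\PoisTau_n}-\PoisCN_{n\theta})+(\PoisCN_{n\theta}-nw)$, treat the first bracket by a Taylor expansion of the fluid limit $s\mapsto\inp{\mathsf{w}(s)}{\mathsf{1}}$ together with a random time change (so it is asymptotically proportional to $n^{-1/2}(\PoisTau_n-n\theta)$) and the second bracket by Corollary \ref{c:FCLT_N}; combining the two jointly Gaussian contributions produces the CLT with variance $\sigma_w^2$, i.e.\ \eqref{eq:var-ntau}, the constant $c_w$ encoding the combination of the fluid-limit slope at $\theta$ with the delta-method factor $-1/f'(\theta)$.

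\emph{Main obstacle.} The two delicate points are: (a) making the first-passage/delta-method step rigorous --- $\PoisTau_n$ is the hitting time of $0$ by a jump process, so one must combine the monotonicity of the fluid limit $f$ near $\theta$ with the functional CLT on a neighbourhood of $\theta$, isolating the event $\{|\PoisTau_n/n-\theta|<\epsilon\}$ whose complement is negligible by Theorem \ref{t:LLN}; and (b) bookkeeping the correlations, i.e.\ using the joint convergence in Theorem \ref{prop:Poisson_joint} (and in the random-time-change steps) rather than marginals, since $\sigma_\tau^2$ and $\sigma_w^2$ genuinely mix the $\PoisN^i$-, $\cP$- and $\PoisS$-fluctuations. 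The remaining work --- reducing the limiting Gaussian variances to the closed forms \eqref{eq:var-tau-til}, \eqref{eq:var-tau} and \eqref{eq:var-ntau} --- is long but mechanical.
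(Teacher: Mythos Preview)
Your proposal is correct and follows essentially the same route as the paper: the same decomposition of $\PoisS_t$ into the $r_i$, $\PoisN^i$, and $\cP$ contributions, Donsker plus random time change for the $r_i$, the joint convergence of Theorem~\ref{prop:Poisson_joint}, and the same splittings for $\tau_n=\cP_{\PoisTau_n}$ and $\tilde\cN_{\PoisTau_n}$. The one place where the paper is slightly slicker is your Step~2: instead of a first-passage inversion $\{\PoisTau_n\le n\theta+x\sqrt n\}\approx\{\PoisS_{n\theta+x\sqrt n}\le 0\}$, the paper simply evaluates the functional CLT for $n^{-1/2}(\PoisS_{ns}-nf(s))$ at the random time $s=\PoisTau_n/n$, uses $\PoisS_{\PoisTau_n}=0$ to conclude $-\sqrt n\,f(\PoisTau_n/n)\to$ the limiting Gaussian at $\theta$, and then Taylor-expands $f$; this bypasses the monotonicity/recrossing argument you flagged as delicate.
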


\begin{proof} 
The random variables $\L^1,\ldots,\L^J$ are independent by construction. By Proposition \ref{prop:var-li} $(\L^i_l)_{l\in \N}$ is an i.i.d.\ copy of $\L^i$, and also independent of $\sN^i_t$, so the random variables $\mathcal{R}^i_t=\sum_{l=1}^{\sN_t^i}\L_l^i,$ for $i=1,\ldots,J$ are independent as well, and we can apply for each of the processes $\left\{\sum_{l=1}^{nq}\L^i_l;\ q\geq 0\right\}$ the Donsker's invariance principle, and then a random time change.

For simplicity of notation, for $i\in [J]$ and $n\in\mathds{N}$,
let us write $r_i(n):=\sum_{l=1}^n\L^i_l$ so that $\mathcal{R}_t^i=r_i(N_t^i)$.
By Donsker's invariance principle we have
$$\left\{\frac{1}{\sqrt{n}}\left(r_i(nq)-nq\E[\L^i]\right);\ q\geq 0\right\}\stackrel{\law}{\longrightarrow}\left\{
\sigma_i \tilde{\sB}^i({q})
; \ q\geq 0\right\},\quad \text{as  } n\to\infty$$
in the Skorokhod topology, where
$\tilde{\sB}^i$ 
is a Brownian motion, and 
$\sigma_i^2=\Var[\L^i]$. Let $\sX_s^{(i)}$ be the centered Gaussian process from Proposition \ref{prop:clt-n_i} which arises as scaling limit of $\PoisN_{ns}^i$. Since the random variables $\L_l^i$ are independent of $\sN^i_{ns}$ we also have the following joint convergence:
$$ \begin{pmatrix} \big\{ n^{-1/2} (r_i(nq)-nq\mathbb{E} [\L^i]); q\ge 0 \big\} \\  \big\{n^{-1/2}\left(\PoisN_{ns}^i-n\gamma_i(1-e^{-\beta_i s}) \right); s\geq 0\big\} 
\end{pmatrix}\stackrel{\law}{\longrightarrow}
\begin{pmatrix} \big\{
\sigma_i
\tilde{\sB}^i(q); 
\ q\geq 0\big\}\\ \big\{\sqrt{\gamma_i}\mathsf{X}^{(i)}_s;\ s\geq 0\big\} \end{pmatrix} $$
in the product topology, as $n\to \infty$, where both components are equipped with the Skorokhod topology. By Lemma \ref{l:asy_independence} this convergence holds under $\mathbb{P}\left[\cdot| \mathsf{Surv^{MGW}}\right]$ as well. Using that, by definition we have the sequence of equalities  
$$r_i(\PoisN^i_{ns})=r_i(\sN^i_{\mathcal{P}_{ns}})=\mathcal{R}^i_{\mathcal{P}_{ns}}=\sum_{l=1}^{\sN^i_{\mathcal{P}_{ns}}}\L_l^i=\sum_{l=1}^{\PoisN^i_{ns}}\L_l^i=\tilde{\mathcal{R}}^i_{ns}$$
and doing a random change of time (\cite[Lemma p.\ 151]{Bill99}) in $r_i(nq)$, we obtain
$$\left\{\frac{1}{\sqrt{n}}\left(\tilde{\mathcal{R}}^i_{ns}-\PoisN^i_{ns}\E[\L^i]\right); \ s\geq 0\right\}\stackrel{\law}{\longrightarrow}\left\{
\sigma_i
\tilde{\sB}^i ({w_i(s)}); \ s\geq 0
\right\}, $$
where we recall that $w_i(s)=\gamma_i(1-e^{-\beta_i s})$.
Hence 
\begin{align*}
\left\{\frac{1}{\sqrt{n}}\left(\tilde{\mathcal{R}}^i_{ns}-n\gamma_i\E[\L^i](1-e^{-\beta_i s}) \right); \ s\geq 0 \right\} \stackrel{\law}{\longrightarrow}
\left\{\sigma_i
\tilde{\sB}^i({w_i(s)})
+\E[\L^i]\sqrt{\gamma_i}\sX^{(i)}_s; \ s\geq 0\right\},
\end{align*}
where $\{\sX_s^{(i)}; s\ge 0\}$ is independent of $\{\tilde B_t^i; t \ge 0\}$. 
Since the processes $r_i(t)$, $i=1,\ldots, J$, (and thus also the processes $\tilde{\mathcal{R}}^i_t$) are independent, we also have convergence in law for the sum $\tilde{\mathcal{R}}_{ns}:=\sum_{i=1}^J \tilde{\mathcal{R}}^i_{ns}$. Therefore the process 
$$\left\{\frac{1}{\sqrt{n}}\left(\tilde{\mathcal{R}}_{ns}-n\sum_{i=1}^J\E[\L^i]\gamma_i(1-e^{-\beta_i s}) \right); \ s\geq 0 \right\} $$
converges in law to 
$$\left\{\sum_{i=1}^J\left(\sigma_i\tilde{\sB}^i({w_i(s)})+
\E[\L^i]\sqrt{\gamma_i}\sX^{(i)}_s\right); \ s\geq 0\right\},$$
where the processes $\{ \sX_s^{(i)}; s \ge 0\}$, $\{\tilde B_t^i; t\ge 0\}$, $i\in [J],$ are independent of each other. 
Now we know that at the end of the process (which occurs at time $\tau_n$) we have
$$\tilde{\mathcal{R}}_{\tilde\tau_n}=\mathcal{R}_{\tau_n}=\tau_n=\cP_{\tilde \tau_n}.$$ 
So $\tilde\tau_n$ is the point at which the process $\{\tilde{\mathcal{R}}_t - \mathcal{P}_t \mid t\in [0,\infty) \}$ becomes zero. Due to the independence mentioned above 
together with the scaling limit for $Z_{ns}^{\cP}$ from Theorem \ref{prop:Poisson_joint}, we get the convergence in law
\begin{align*}
&\left\{\frac{1}{\sqrt{n}}\left(\tilde{\mathcal{R}}_{ns}-\cP_{ns}+ns-n\sum_{i=1}^J\E[\L^i]\gamma_i(1-e^{-\beta_i s})\right); \ s\geq 0 \right\} 
\stackrel{\law}{\longrightarrow} \\
&
\left\{
\sum_{i=1}^J\sigma_i\tilde{\sB}^i({w_i(s)})
+\sum_{i=1}^J\E[\L^i]\sqrt{\gamma_i}\sX_s^{(i)}-\sB^{\cP}(s);
\ s\geq 0\right\},
\end{align*}
where $\sB^{\cP}$ is a Brownian motion
 independent of the family $\{\tilde{\sB}^i\}_{1\le i\le J}$
 and correlated to $\{\sX^{(i)}\}_{1\le i \le J}$ as in Theorem \ref{prop:Poisson_joint}.  
By Theorem \ref{t:LLN}, both $\tau_n/n$ and $\tilde\tau_n/n$ converge in probability to $\theta\ind{\mathsf{Surv^{MGW}}}$, with $ns=\tilde\tau_n$
and we deduce that
$$n^{1/2}\cdot \underbrace{\left(\frac{\tilde\tau_n}{n}-\sum_{i=1}^J\E[\L^i]\gamma_i(1-e^{-\beta_i \tilde\tau_n/n})\right)}_{:= -f(\tilde\tau_n/n)} \stackrel{\law}{\longrightarrow} 
\sum_{i=1}^J\sigma_i\tilde{\sB}^i({w_i(\theta)})
+\sum_{i=1}^J\E[\L^i]\sqrt{\gamma_i}\sX^{(i)}_{\theta}-\sB^{\cP}(\theta)
$$
 as $n\to\infty$, 
with $f(s)$ as defined in \eqref{eq:ftheta}.
Since $f(\theta)=0$, a Taylor series expansion around $\theta$ gives 
$$ \frac{\tilde\tau_n}{n} - \theta = \frac{-f(\frac{\tilde\tau_n}{n})}{-f'(\theta)- \frac{1}{2}(\frac{\tilde\tau_n}{n}-\theta) f''(A)} $$
for some $A\in [\min\{\tilde\tau_n/n, \theta\}, \max\{\tilde\tau_n/n,\theta\}]$. 
By the law of large numbers, Theorem \ref{t:LLN}, we have $f'(\theta)+ \frac{1}{2}(\frac{\tilde\tau_n}{n}-\theta) f''(A)\longrightarrow f'(\theta)$ as $n\to\infty$ in probability. 
Thus
$$n^{-1/2}(\tilde\tau_n - n\theta) \stackrel{\law}{\longrightarrow} \frac{1}{-f'(\theta)} \cdot\left( 
\sum_{i=1}^J\sigma_i\tilde{\sB}^i({w_i(\theta)})
+\sum_{i=1}^J\E[\L^i]\sqrt{\gamma_i}\sX_{\theta}^{(i)}-\sB^{\cP}(\theta)
\right)\sim \mathcal{N}(0,\sigma_{\tilde\tau}^2),$$
with $\sigma_{\tilde\tau}^2$ defined as in equation \eqref{eq:var-tau-til} 
and this completes the first of the three claims (being $-f'(\theta)=1-\sum_{i=1}^J\beta_i\gamma_i\E[\L^i]e^{-\beta_i\theta}$).

The second claim follows easily from the first one by using that $\tau_n=\cP_{\tilde\tau_n}$ together with 
Theorem \ref{prop:Poisson_joint} we also get
\begin{align*} 
&n^{-1/2}(\tau_n-n\theta)=n^{-1/2}(\cP_{\tilde\tau_n}-\tilde\tau_n+\tilde\tau_n-n\theta)\\
& \stackrel{\law}{\longrightarrow}\sB^{\cP}(\theta)
  +  \frac{1}{-f'(\theta) 
} \cdot\left( 
\sum_{i=1}^J \sigma_i \tilde{\sB}^i({w_i(\theta)})
+\sum_{i=1}^J\E[\L^i]\sqrt{\gamma_i}\sX_{\theta}^{(i)}-\sB^{\cP}(\theta)
\right)\\
& \sim \mathcal{N}(0,\sigma_{\tau}^2)
\end{align*}
with $\sigma_{\tau}^2$ given by equation \eqref{eq:var-tau}.

Finally, for the third claim we proceed as follows:
\[ n^{-1/2}\left(\PoisCN_{\PoisTau_n}-n\inp{\mathsf{w}(\theta)}{\mathsf{1}}\right)=
n^{-1/2}\left(\PoisCN_{\PoisTau_n}-n\inp{\mathsf{w}(\PoisTau_n/n)}{\mathsf{1}}+n\inp{\mathsf{w}(\PoisTau_n/n)}{\mathsf{1}}-n\inp{\mathsf{w}(\theta)}{\mathsf{1}}\right).\]
Now 
Theorem \ref{prop:Poisson_joint} together with 
\[n \langle \mathsf{w}(\PoisTau_n/n)-\mathsf{w}(\theta), \mathsf{1} \rangle = 
\sum_{i=1}^J \beta_i\gamma_i e^{-\beta_i\theta}(\PoisTau_n-n\theta)+ O\left(n\left(\frac{\PoisTau_n}n-\theta\right)^2\right) \]
and 
\[ n^{1/2}\left(\frac{\PoisTau_n}{n}-\theta\right)^2 \longrightarrow 0\]
in probability as $n\to\infty$ yields
\begin{align*}
&n^{-1/2}\left(\PoisCN_{\PoisTau_n}-n\inp{\mathsf{w}(\theta)}{\mathsf{1}}\right)\\
&\stackrel{\law}{\longrightarrow} \sum_{i=1}^J \sqrt{\gamma_i} \sX^{(i)}_{\theta} 
{+} 
{c_w}\cdot \left( 
\sum_{i=1}^J \sigma_i\tilde{\sB}^i({w_i(\theta)})
+\sum_{i=1}^J\E[\L^i]\sqrt{\gamma_i}\sX^{(i)}_{\theta}-\sB^{\cP}(\theta)
\right)\\
&\sim \mathcal{N}(0,\sigma^2_w),
\end{align*}
where 
{$c_w$ and $\sigma_w^2$ are defined as in \eqref{eq:cw_def} and \eqref{eq:var-ntau}.
This completes the whole proof.}
\end{proof}

\begin{corollary}
Since $\tilde{\cN}_t=\cN_{\cP_t}$ for $t\in\mathds{N}$ and $\tau_n=\cP_{\PoisTau_n}$, we have $\tilde{\cN}_{\PoisTau_n}=\cN_{\tau_n}$ and thus 
$$n^{-1/2}(\cN_{\tau_n}-nw) \stackrel{\law}{\longrightarrow} \mathcal{N}(0, \sigma_{w}^2), \quad  \text{as}\quad n\to\infty$$
with $w$ and $\sigma_{w}^2$ as in Theorem \ref{thm:clt}.
\end{corollary}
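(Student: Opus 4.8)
The plan is to deduce the corollary directly from the third convergence in Theorem \ref{thm:clt} by transferring it along an almost-sure identity between the continuous-time quantity $\PoisCN_{\PoisTau_n}$ and the discrete-time quantity $\cN_{\tau_n}$; no new probabilistic estimate is required.

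First I would record the two pathwise facts that are already available in the excerpt. By the definition of the Poissonized process in Section \ref{sec:pois-pr}, one has $\PoisCN_t = \cN_{\cP_t}$ for every $t\ge 0$, since $\cP_t$ is $\mathds{N}$-valued and $\PoisCN_t$ was defined to be $\cN_{\cP_t}$. Moreover, from the continuous-time approximation of the coupon collector, $\PoisTau_n = \inf\{t\in\mathbb{R}\mid \cP_t\ge \tau_n\}$ is precisely the arrival time of the $\tau_n$-th event of the rate-one Poisson process $(\cP_t)_{t\ge 0}$; as this process increases only by unit jumps, it takes the value $\tau_n$ at that instant, i.e.\ $\cP_{\PoisTau_n} = \tau_n$ almost surely. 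Evaluating the first relation at $t=\PoisTau_n$ and using the second then gives the pathwise identity
\[ \PoisCN_{\PoisTau_n} = \cN_{\cP_{\PoisTau_n}} = \cN_{\tau_n} \qquad \text{a.s.} \]

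With this identity in hand the conclusion is immediate. Theorem \ref{thm:clt} asserts that, conditionally on $\mathsf{Surv}^{\mathsf{MGW}}$, $n^{-1/2}(\PoisCN_{\PoisTau_n}-nw)\stackrel{\mathcal{D}}{\longrightarrow}\mathcal{N}(0,\sigma_w^2)$, with $w=\inp{\mathsf{w}(\theta)}{\mathsf{1}}$ and $\sigma_w^2$ as there. Since $\PoisCN_{\PoisTau_n}$ and $\cN_{\tau_n}$ agree almost surely, they have the same distribution for each $n$, so the left-hand side may be replaced by $n^{-1/2}(\cN_{\tau_n}-nw)$ without changing the limit, which is exactly the claimed convergence (understood under the same conditioning on $\mathsf{Surv}^{\mathsf{MGW}}$). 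The only point meriting a word of care is the exact equality $\cP_{\PoisTau_n}=\tau_n$ — i.e.\ that $\PoisTau_n$ is a jump time of $\cP$ and that the jump has unit size — which holds trivially for a standard Poisson process; beyond this there is no obstacle.
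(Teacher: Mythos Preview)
Your proof is correct and follows exactly the same approach as the paper: the corollary is stated with its justification embedded in it, namely the pathwise identity $\PoisCN_{\PoisTau_n}=\cN_{\cP_{\PoisTau_n}}=\cN_{\tau_n}$ (from $\PoisCN_t=\cN_{\cP_t}$ and $\tau_n=\cP_{\PoisTau_n}$), which allows one to transfer the third CLT of Theorem~\ref{thm:clt} directly to $\cN_{\tau_n}$. Your additional care in justifying $\cP_{\PoisTau_n}=\tau_n$ via the unit-jump property of the Poisson process is a welcome clarification but not a departure from the paper's reasoning.
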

We would like to emphasize that Theorem \ref{t:LLN} and Theorem \ref{thm:clt} are generalizations of \cite[Theorem 2.2]{comets14} and \cite[Theorem 2.3]{comets14} respectively, with the main difference being in the fact that the coupon collector's process and the branching process are not independent, and several difficulties arise in this case. We also prove continuous-time counterparts of these results.

\section{Inhomogeneous versus homogeneous population}\label{sec:final}

In this section we want to compare what happens in the uniform and non-uniform case. As we have seen in Theorem \ref{t:LLN}, the results are apparently undistinguishable. On the event $\mathsf{Surv}^{\mathsf{MGW}}$, we have
\[
\frac{\tau_n}n \to \theta\quad \text{and} \quad  \frac{\cN_{\tau_n}}n \to p,\quad \text{ in probability, as  } n\to\infty.
\]
The difference lies in the computation of $\theta$ and $p$.
\paragraph*{Homogeneous spread capacity.} 
Let us first deal with the case where all individuals have identically distributed spread capacities, let this law be $\mathcal L$.
In a uniform population, all individuals have the same susceptibility and $\theta$ is the solution of the equation
\[
\theta = \E[\mathcal L]\cdot\left(1-e^{-\theta}\right).
\]
Let $\theta_{un}$ be this solution: the corresponding fraction of infected individuals is then
\[
p=p_{un}:=1-e^{-\theta_{un}}=\theta_{un}/ \E[\mathcal L].
\]

In the non-uniform case,
$\theta$ is the solution of the equation
\[
\theta = {\E[\mathcal L]}\left( 1-\sum_{i=1}^J \gamma_ie^{-\beta_i\theta}\right).
\]
We call this solution $\theta_\beta$ and we have $p=p_\beta= 1-\sum \gamma_ie^{-\beta_i\theta_\beta}=\theta_\beta/\E[\mathcal L]$.  

The following result says that in the non-uniform case, the epidemics dies out earlier and spreads in a smaller portion of the population.

\begin{figure}[h]
\centering
\begin{tikzpicture}[xscale=1.5,yscale=3]
\draw [<->] (0,1.1) -- (0,0) -- (3.1,0);

\draw[dotted] (0,0) to (3,1);

\node[below] at (0,0) {0};
\node[below] at (2.82144,0) {$\theta_{un}$};
\draw[dotted] (2.82144,0) -- (2.82144,0.94048);
\draw[dotted] (2.82144,0) -- (2.82144,0.94048);
\draw[dotted] (0,0.94048) -- (2.82144,0.94048);

\draw[dotted] (0,.690133) -- (2.0704,0.690133);
\draw[dotted] (2.0704,0) -- (2.0704,0.690133);
\node[below] at (2.0704,0) {$\theta_\beta$};

\node[left] at (0,0.690133) {{$p_\beta$}};
\node[left] at (0,0.94048) {{$p_{un}$}};

\draw[dashed, thick, domain=0:3] plot (\x, {(1-exp(-\x)});
\node[right] at (3,0.95) {$y=g(x)$};
\draw[thick, domain=0:3] plot (\x, {1-(exp(-9*\x/4)+2*exp(-3*\x/8))/3});
\node[right] at (3,0.7831) {$y=f(x)$};
\end{tikzpicture}
\caption{Inhomogeneous (solid) vs homogeneous (dashed)}
\label{fig:inhom1}
\end{figure}
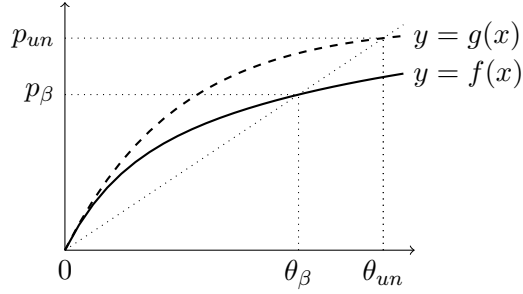

\begin{proposition}
For any fixed value of $\E[\mathcal L]>1$, and any choice of the parameters $\beta_i$,
\[
\theta_\beta\le\theta_{un} \quad \text{and} \qquad p_\beta\le p_{un},
\]
and equality holds if and only if $\beta_i=1$ for all $i\in[J]$.
\end{proposition}

\begin{proof}
If $\beta_i=1$ for all $i\in[J]$ the statement is trivial. Let us assume that there exists $\beta_i\neq 1$.
Define
\[
f(x)=1-\sum_{i=1} ^J\gamma_ie^{-\beta_ix} \qquad \text{and} \qquad g(x)=1-e^{-x}.
\]
Note that $(\theta_{un},p_{un})$ and $(\theta_\beta,p_\beta)$ are the intersections of $y=x/\E[\mathcal L]$ with $y=g(x)$ and $y=f(x)$, respectively.
It is enough to prove that $f(x)\le g(x)$ for all $x\ge0$ and equality holds only at 0 (see Figure \ref{fig:inhom1}).
The two functions are clearly equal at 0. Since the $\exp$-function is a 
strictly convex function and $\sum_{i=1}^J \gamma_i=1$, 
for $x>0$ we get
\[ f(x)=1-\sum_{i=1}^J \gamma_ie^{-\beta_ix} <   1-e^{-\sum_{i=1}^J \gamma_i\beta_ix}=1-e^{-x}=g(x), \]
where we have used $\sum_{i=1}^J \gamma_i\beta_i= \sum_{i=1}^J \alpha_i=1$.
This proves the statement.
\end{proof}

\paragraph*{Inhomogeneous spread capacity.} 
When types differ in the expected spread capacity $\E[\mathcal L^i]$, as well as in the susceptibility,  then 
 $\theta$ is the solution of
\[
\theta  = f(\theta):=\sum_{i=1}^J \E[\mathcal L^i]\gamma_i(1-e^{-\beta_i\theta}),
\]
and $p= 1- \sum_{i=1}^J\gamma_ie^{-\beta_i\theta}$.
We want to compare this case with the case where
all $\beta_i=1$ ( homogeneous susceptibility, inhomogeneous spread -- $\E[\mathcal L^i]$ are unaltered).
If we define
\[
g(x)=\sum_i  \E[\mathcal L^i]\gamma_i(1-e^{-x}),
\]
the parameters $\theta_\beta$
and $\theta_{un}$ are the $x$-coordinates  of the intersection of $y=x$ with $y=f(x)$ and $y=g(x)$, respectively. 
Depending on the choice of the parameters, the "homogeneous" case ($\beta_i=1$) may last longer or not, that is, both
$\theta_{un}>\theta_\beta$ and $\theta_{un}<\theta_\beta$ are possible scenarios.
Indeed there are examples where the inhomogeneous setting, compared to the homogeneous one, has:
\begin{enumerate}[a)]
\setlength\itemsep{0em}
\item a shorter epidemics involving a smaller percentage of cases;
\item
a longer epidemics involving a smaller percentage of cases;
\item a longer epidemics involving a larger percentage of cases.
\end{enumerate}
Here are examples of the three possible situations described before.
\begin{enumerate}[a)]
\setlength\itemsep{0em}
\item 
Take
$J=3$, $\gamma_i=1/3$ for all $i$, $ \E[\mathcal L^1]=1$, $ \E[\mathcal L^2]=2$, $ \E[\mathcal L^3]=4$, $\beta_1=2$, $\beta_2=0.5$, 
$\beta_3=0.5$, then $\theta_{un}\approx2.0255$, $\theta_\beta\approx 1.2139$, $p_{un}\approx 0.8681$, $p_\beta\approx0.6073$.
\item
Take $J=3$, $\gamma_i=1/3$ for all $i$, $ \E[\mathcal L^1]=1$, $ \E[\mathcal L^2]=2$, $ \E[\mathcal L^3]=4$, $\beta_1=0.6$, $\beta_2=1.2$, 
$\beta_3=1.2$:  $\theta_{un}\approx2.0255$, $\theta_\beta\approx 2.0703$, $p_{un}\approx 0.8681$, $p_\beta\approx0.8482$.
\item 
Let $J=2$, $\gamma_1=\gamma_2=1/2$, $ \E[\mathcal L^1]=0.2$,
$ \E[\mathcal L^2]=3.6$, $\beta_1=0.6$, $\beta_2=1.4$, $\theta_{un}\approx1.4578$, $\theta_\beta\approx 1.6964$, $p_{un}\approx 0.7672$, $p_\beta\approx0.7728$.
\end{enumerate}

Notice that in the inhomogeneous setting we cannot have a shorter epidemics involving a larger percentage of cases. Indeed, if $\theta_\beta < \theta_{un}$, then the convexity of the $\exp$-function implies
\[ p_\beta= 1- \sum_{i=1}^J \gamma_i e^{-\beta_i \theta_\beta} < 1- e^{-\sum_{i=1}^J \gamma_i \beta_i \theta_\beta} = 1-  e^{-\theta_\beta}<  1-  e^{-\theta_{un}} = 1- \sum_{i=1}^J \gamma_i e^{-\beta_i \theta_\beta} =p_{un}. \]

\begin{figure}[h]
\centering
\includegraphics[height=4cm]{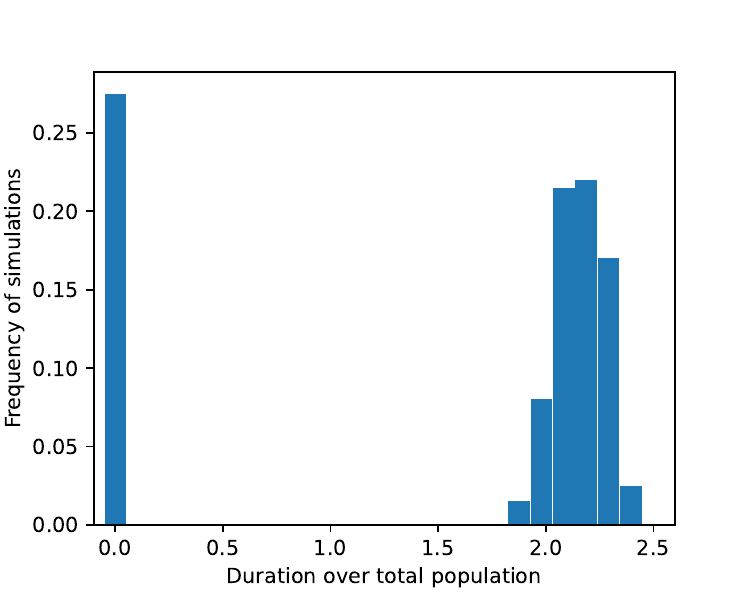}
\includegraphics[height=4cm]{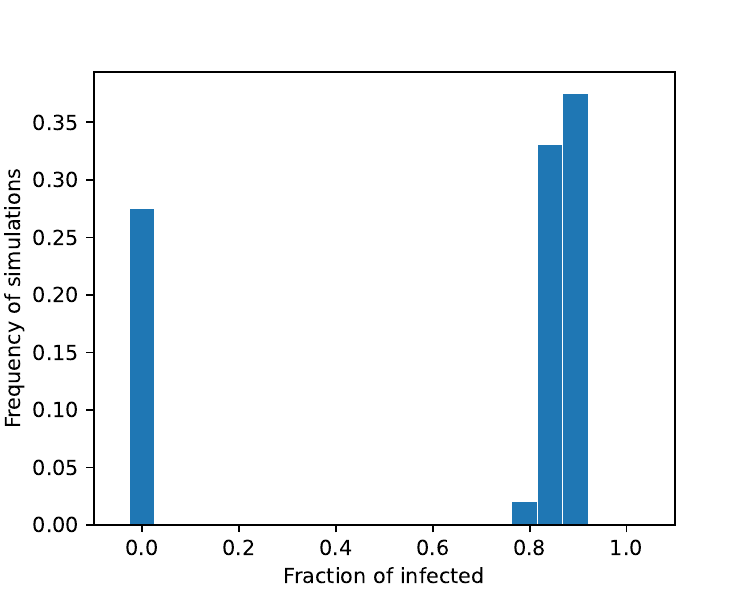}
\includegraphics[height=4cm]{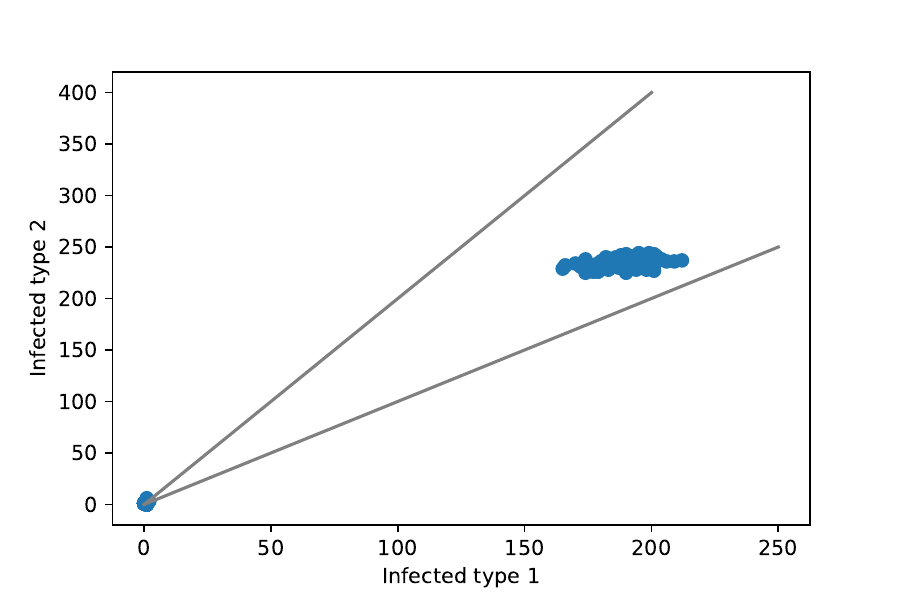}
\caption{Equal groups, second more susceptible}
\label{fig:simul1}
\end{figure}

One may also wonder what happens if we compare an epidemics which is inhomogeneous both in the spread and susceptibility, with
the case where the $\beta_i$ are unaltered and the spread capacities are homogeneous in the sense that all types have the same
 spread capacity $\mathcal L$, with $\E[\mathcal L]=\sum_i \gamma_i\E[\mathcal L^i]$. It is not difficult to find examples 
where the homogeneous case lasts longer (or less) than in the inhomogeneous case.

\paragraph*{Simulations.} 
Below we give the plots of some simulations, which agree with our theoretical results.
In all simulations we have $n=500$ and we run 200 simulations. Moreover, the spread capacity is homogeneous with expectation 2.5
(1/2 being the probability of 4 attempts and 1/4 each the probability of 0 or 2 attempts). The probability of extinction of the associated GW process is 0.2711.
There are two subpopulations which differ in susceptibility.
We have the histograms of the frequencies of $N_n(\tau_n)/n$ and of $\tau_n/n$. We also plot a scatterplot of the numbers of infected of the two types, with 
the two lines  $y= x \cdot\gamma_2/\gamma_1$ and $y = x\cdot \beta_2/\beta_1$. 

In the first example depicted in Figure \ref{fig:simul1}, $\gamma_1=0.5$, $\gamma_2=0.5$, $\alpha_1=1/3$ and $\alpha_2=	2/3$.
The limiting values are $\theta = 2.12261$ and $p = 0.849044$.

\begin{figure}[h]
\centering
\includegraphics[height=4cm]{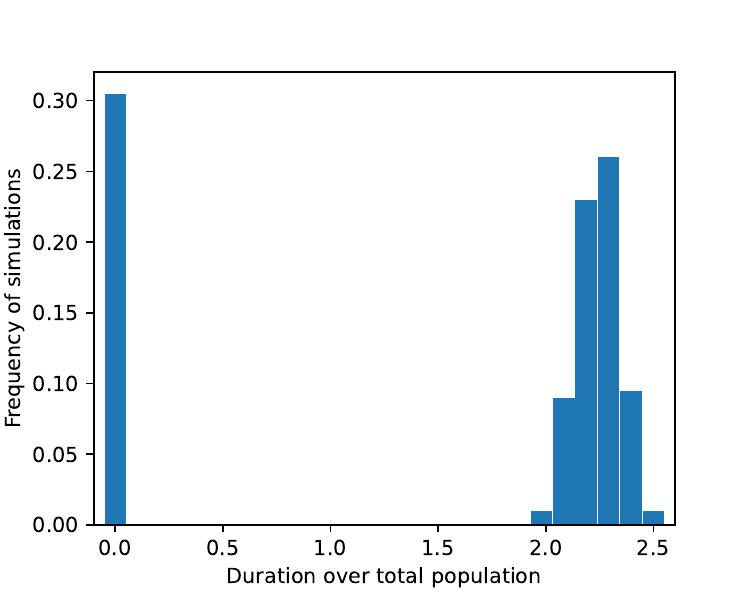}
\includegraphics[height=4cm]{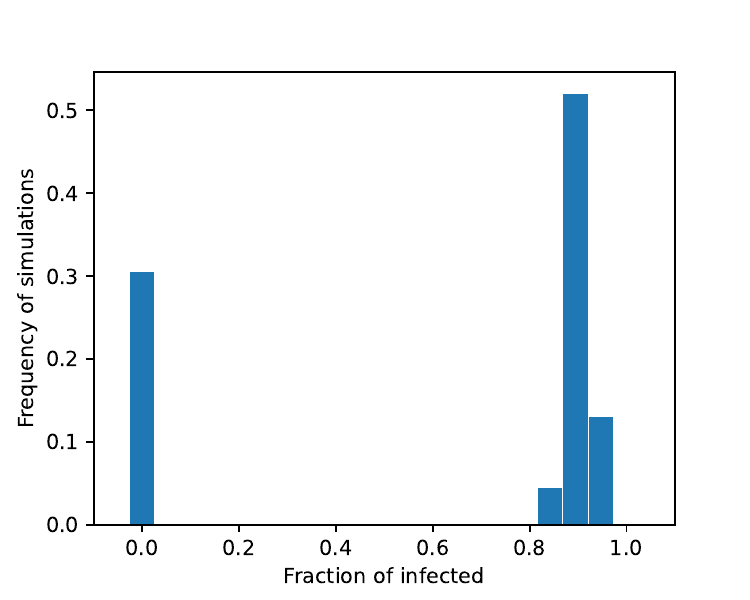}
\includegraphics[height=4cm]{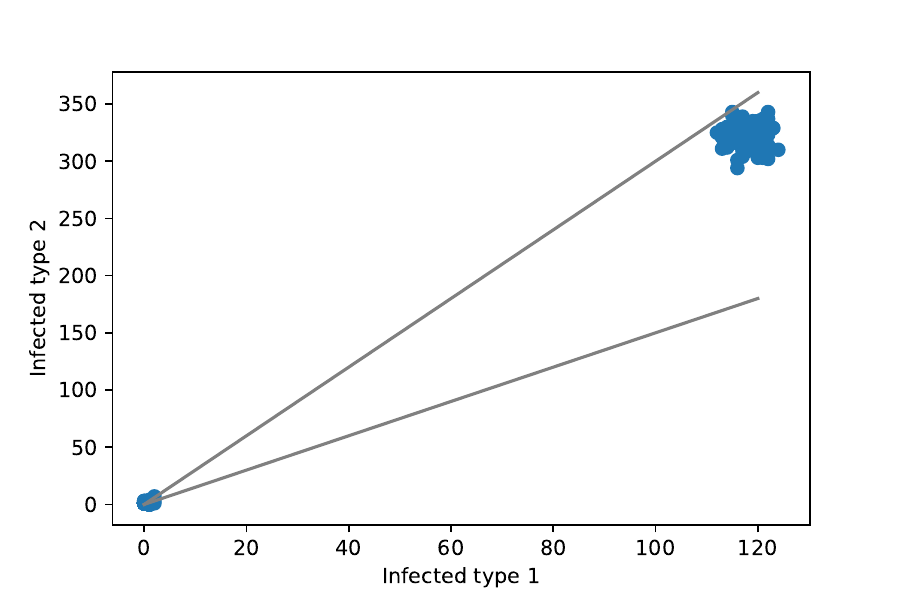}
\caption{Larger group is more likely}
\label{fig:simul2}
\end{figure}

In the second example depicted in Figure \ref{fig:simul2}, $\gamma_1=0.25$, $\gamma_2=0.75$, $\alpha_1=1/3$ and $\alpha_2=	2/3$.
The larger group is more likely to be chosen as a target, but single individuals in this group are less likely to be hit than individuals in the other group.
The limiting values are $\theta = 2.20202$ and $p = 0.880807$.

In the third example depicted in Figure \ref{fig:simul3}, $\gamma_1=0.75$, $\gamma_2=0.25$, $\alpha_1=1/3$ and $\alpha_2=	2/3$.
The larger group is less likely to be chosen as a target, and single individuals are less likely to be hit than individuals in the smaller group.
The limiting values are $\theta = 1.54723$ and $p = 0.618904$.

\begin{figure}[h]
\centering
\includegraphics[height=4cm]{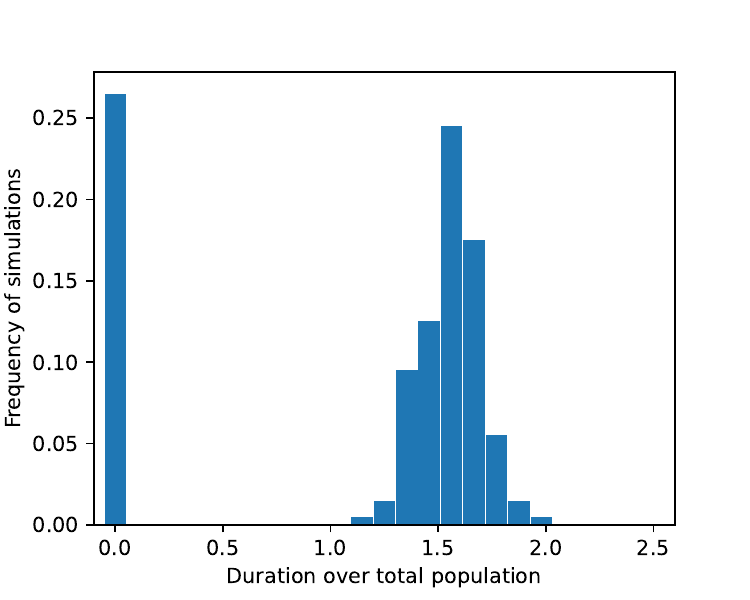}
\includegraphics[height=4cm]{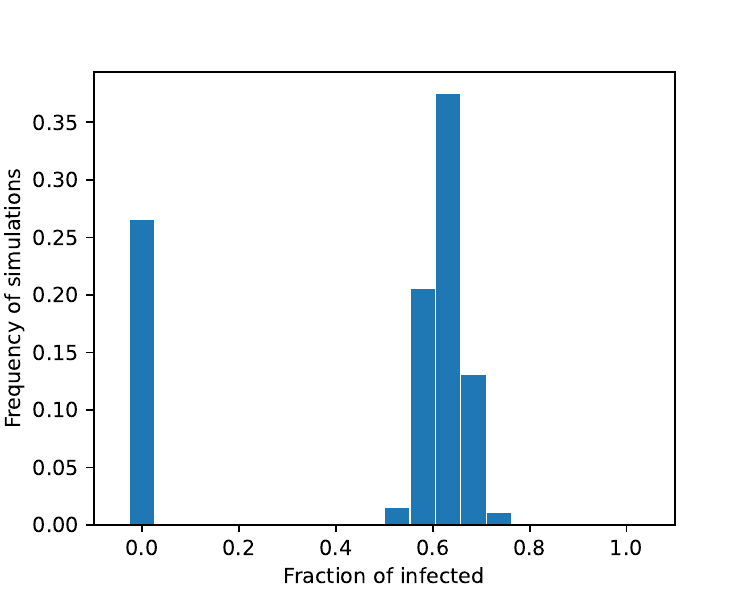}
\includegraphics[height=4cm]{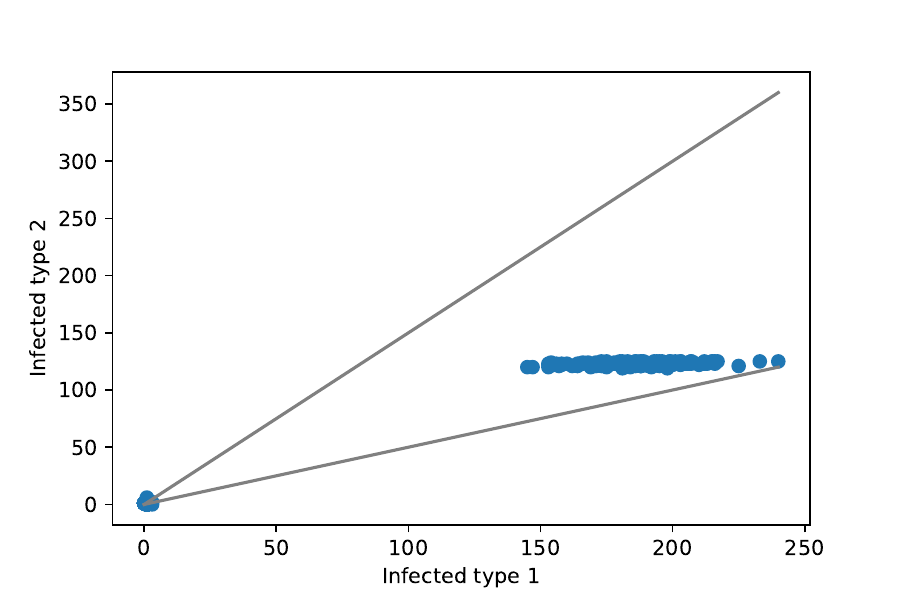}
\caption{Larger group is less likely}
\label{fig:simul3}
\end{figure}

\paragraph*{Further directions and questions.} We believe that our method can also be extended to the case when the underlying state space is a Erdös-Renyi random graph $G(n,p)$. Some of our results may carry over to the random setup, but the proofs will be very technical. Another possible extension of our model is to drop \textsf{Assumption 1}, and to consider the case where the groups of vertices $n_i$ of type $i$ and their weights $\alpha_i$ are functions of the total number of vertices $n$. Depending on the growth of these functions, we may expect different behavior and rescaling in the limit results.

\paragraph*{Acknowledgements.} We are grateful to the referee, whose comments and suggestions contributed to the improvement of the paper.
Daniela Bertacchi and Fabio Zucca acknowledge support by GNAMPA-INdAM; the research of
Ecaterina Sava-Huss is supported by the Austrian Science Fund (FWF): P 34129.

\bibliography{Virus-spread2.bib}{}

\bibliographystyle{abbrv}

\textsc{Daniela Bertacchi}, Department of Mathematics, University of Milano-Bicocca, Italy.\\
\texttt{daniela.bertacchi@unimib.it}

\textsc{Jürgen Kampf}, {University Hospital of Essen, Germany.\\
\texttt{juergen.kampf@uk-essen.de}}

\textsc{Ecaterina Sava-Huss}, Department of Mathematics, University of Innsbruck, Austria.
\texttt{Ecaterina.Sava-Huss@uibk.ac.at}

\textsc{Fabio Zucca},  Department of Mathematics, Polytechnic University of Milan, Italy.\\
\texttt{fabio.zucca@polimi.it}

\end{document}